\documentclass{amsart}
\usepackage{lineno}
%\linenumbers
\modulolinenumbers[5]
\usepackage{setspace}
\doublespacing

\usepackage{hyperref}
\usepackage{amsmath}
\usepackage{amssymb}
\usepackage{amsfonts}
\usepackage{bm}
\usepackage{amsthm}
\usepackage{array,epsfig,fancyhdr}
\usepackage[sectionbib,numbers]{natbib}
\usepackage[utf8]{inputenc}
\usepackage{CJKutf8}
\usepackage{tikz}
\usepackage{mathtools}
\usepackage{tikz}

\usepackage{xr}
\externaldocument[RGB1-]{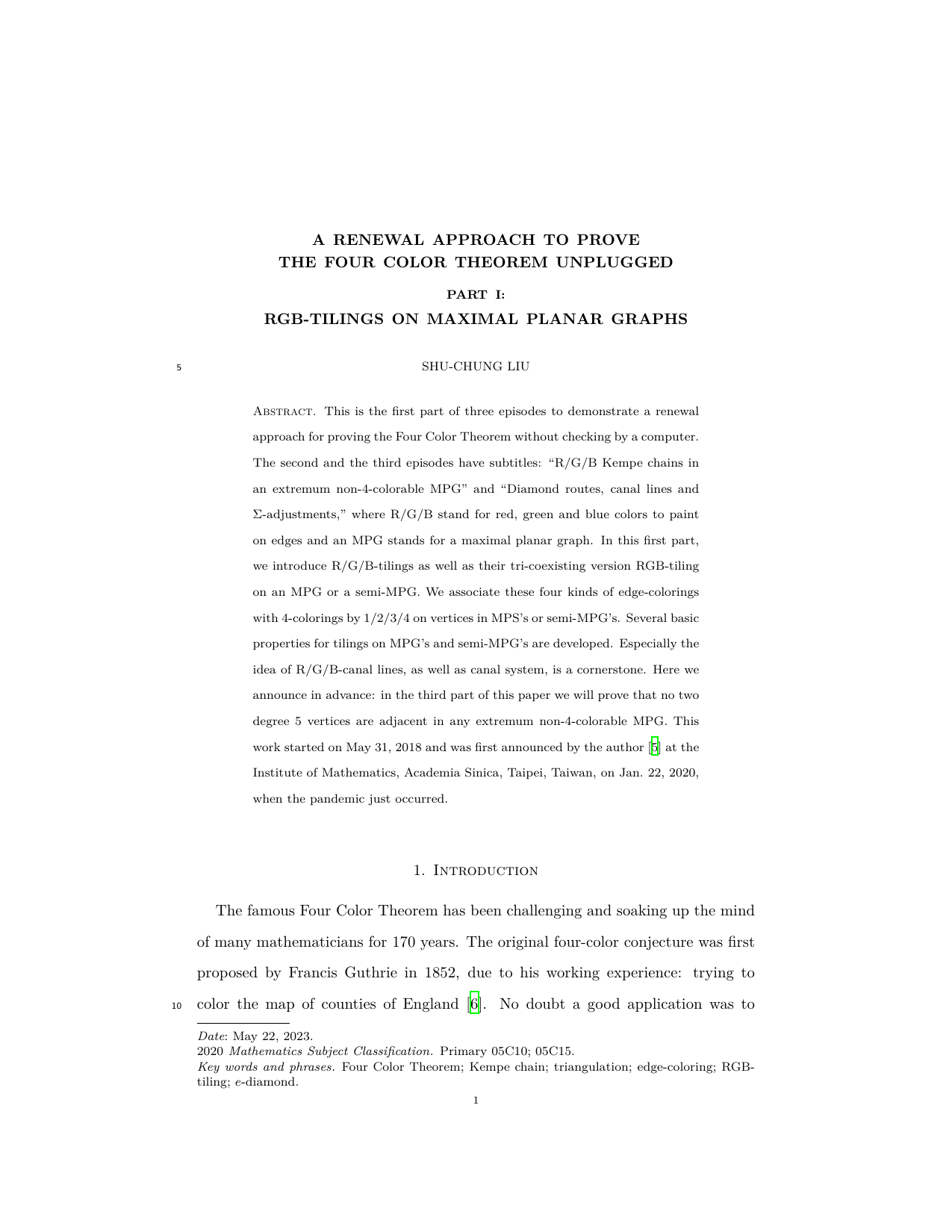}
\externaldocument[RGB3-]{RGB3}

\newtheorem{theorem}{Theorem}[section]
\newtheorem{lemma}[theorem]{Lemma}
\newtheorem{corollary}[theorem]{Corollary}

\newtheorem{fconjecture}[theorem]{False Conjecture}

\theoremstyle{definition}
\newtheorem{definition}[theorem]{Definition}
\newtheorem{example}[theorem]{Example}

\theoremstyle{remark}
\newtheorem{remark}[theorem]{Remark}

\numberwithin{equation}{section}

%    Absolute value notation

%    Blank box placeholder for figures (to avoid requiring any
%    particular graphics capabilities for printing this document).

%\usepackage{xeCJK}
%    \xeCJKsetup{AutoFakeBold=true, AutoFakeSlant=true}
%    \setCJKmainfont{標楷體}
%    \setmainfont{Times New Roman}

\DeclareGraphicsExtensions{.pdf,.png}
\usepackage{wrapfig}
\usepackage{lipsum}

\begin{document}

\title[To prove the Four Color Theorem unplugged]{A renewal approach to prove\\ the Four Color Theorem unplugged\\[1.5ex]{\footnotesize Part II:}\\[0.5ex] R/G/B Kempe chains in an extremum non-4-colorable MPG}

%    Information for first author
\author{Shu-Chung Liu}
%    Address of record for the research reported here
\address{Institute of Learning Sciences and Technologies, National Tsing Hua University, Hsinchu, Taiwan}
%    Current address
%\curraddr{Institute of Learning Sciences and Technologies, National Tsing Hua University, Hsinchu, Taiwan}
\email{sc.liu@mx.nthu.edu.tw}
%    \thanks will become a 1st page footnote.
%\thanks{The first author was supported in part by NSF Grant \#000000.}

%    Information for second author
%\author{Author Two}
%\address{Mathematical Research Section, School of Mathematical Sciences,
%Australian National University, Canberra ACT 2601, Australia}
%\email{two@maths.univ.edu.au}
%\thanks{Support information for the second author.}

%    General info
\subjclass[2020]{Primary 05C10; 05C15}

\date{\today}

%\dedicatory{This paper is dedicated to our advisors.}

\keywords{Four Color Theorem; Kempe chain; triangulation; edge-coloring; RGB-tiling; $e$-diamond}

\begin{abstract}
This is the second part of three episodes to demonstrate a renewal approach for proving the Four Color Theorem without checking by a computer. The first and the third episodes have subtitles: ``RGB-tilings on maximal planar graphs'' and ``Diamond routes, canal lines and $\Sigma$-adjustments,'' where R/G/B stand for red, green and blue colors to paint on edges and an MPG stands for a maximal planar graph. We focus on an extremum non-4-colorable MPG $EP$ in the whole paper. In this second part, we refresh the false proof on $EP$ by Kempe for the Four Color Theorem.  And then using single color tilings or RGB-tilings on $EP$, we offer a renewal point of view through R/G/B Kempe chains to enhance our coloring skill, either in vertex-colorings or in edge-colorings. We discover many fundamental theorems associated with R-/RGB-tilings and 4-colorability; an adventure study on One Piece, which is either an MPG or an $n$-semi-MPG; many if-and-only-if statements for $EP-\{e\}$ by using Type A or Type B $e$-diamond and Kempe chains. This work started on May 31, 2018 and was first announced by the author~\cite{Liu2020} on Jan.\ 22, 2020, when the pandemic just occurred. 
\end{abstract}     

\setcounter{section}{8}
\setcounter{figure}{14}

\maketitle

\section{R/G/B Kempe chains, a renewal point of view} \label{sec:RGBKempeChainDeg5}
Given $EP \in e\mathcal{MPGN}4$, there are at least 12 vertices of degree 5. Let $v_0\in V(EP)$ with $\deg(v_0)=5$. Kempe's classical proof used this fixed vertex $v_0$ and its five neighbors $v_1,v_2,\ldots,v_5$ to perform vertex-color-switching for vertices in two sets, $rC$ and $gC$, where $rC$/$gC$ is a red-/green-connected component linking by 2-4/2-3 edges\footnote{Precisely $rC$ and $gC$ will be denoted by $rC(v_2)$ and $gC(v_5)$ in Subsection~\ref{sec:ReplaceVwE}, because they contain vertex $v_2$ and $v_5$.}. Here we use R/G/B \emph{Kempe chains} to review the old proof by Kempe and discover some important things missed before. 

Two vertices $u,v\in rC$ (a set of red-connected component) must have a red chain (or path) to connect each other. As a red chain or a red-connected component, it could be all 1-3 edges or all 2-4 edge; never mixed. Also it is not just one $u-v$ red chain, but a \emph{cluster} of red chains; however we choose the rightmost chain or the leftmost one to demonstrate the main structure of our target graphs.

To represent $EP$ with $\deg(v_0)=5$ on a flat surface in Figure~\ref{fig:EPcannot}, we show the major part of $EP$ including the five neighbors of $v_0$. Because $EP$ is extremum, $EP-\{v_0\}$ is 4-colorable (Please see Theorem~\ref{RGB1-thm:eMPG4}(b) in Part I of this paper) and $EP-\{v_0\}$ has a 4-coloring function $f$; but $f(v_0)=5$ is inevitable. That means the five neighbors of $v_0$ must using all 4 different colors. Without loss of generality, we draw colors on the neighbors as in  Figure~\ref{fig:EPcannot}. 
   \begin{figure}[h]
   \begin{center}
   \begin{tabular}{ c c }
   \includegraphics[scale=1]{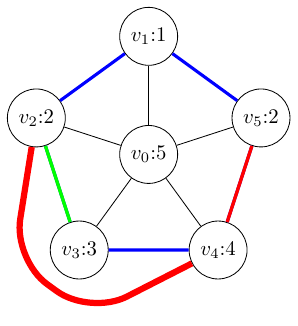}\qquad
   \includegraphics[scale=1]{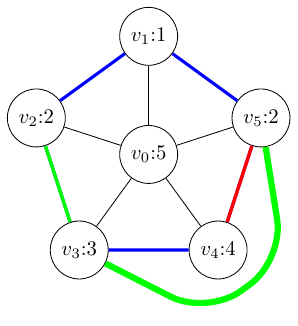}
   \end{tabular}
   \end{center}
   \caption{$EP$ cannot have either $K_r|_{v_2}^{v_4}$ or $K_g|_{v_3}^{v_5}$.} \label{fig:EPcannot}
   \end{figure}

In addition, the two graphs in Figure~\ref{fig:EPcannot} show a 2-4 red path $K_r|_{v_2}^{v_4}$  and a 2-3 green path $K_g|_{v_3}^{v_5}$ respectively. Because of $K_r|_{v_2}^{v_4}$ or $K_g|_{v_3}^{v_5}$, these two graphs are definitely not $EP$. On the left graph, the red path $K_r|_{v_2}^{v_4}$, \underline{blocks any 1-3 red line} from $v_1$ to $v_3$ through $EP-\{v_0\}$. Due to the 1-3 disconnection between $v_1$ and $v_3$, we can perform \emph{vertex-color-switching} on the 1-3 red-connected component containing $v_1$, so that $f(v_1)=1$ turns to be $f(v_1)=3$ without changing the colors of the other four neighbors of $v_0$. Then we can set $f(v_5)=1$ and a 4-coloring function on $EP$ comes out; so $EP$ is not an extremum. Therefore, $EP$ cannot have $K_r|_{v_2}^{v_4}$.  The argument are the same for $K_g|_{v_3}^{v_5}$ in the right graph.

Last paragraph and Figure~\ref{fig:EPcannot} show two forbidden red and green chains for any  extremum $EP$. Then there must be a red \emph{Kempe chain} connecting $v_1$ and $v_3$, denoted by $K_r|_{v_1}^{v_3}$ (or $K_r$ for short), and a green \emph{Kempe chain} connecting $v_1$ and $v_4$, denoted by $K_g|_{v_1}^{v_4}$ (or $K_g$ for short). See the left graph in Figure~\ref{fig:tangle}. 

The existence of $K_r|_{v_1}^{v_3}$ and $K_r|_{v_2}^{v_4}$ is exclusive, i.e., exactly one of them exists, so is the existence of $K_g|_{v_1}^{v_4}$ and $K_r|_{v_3}^{v_5}$. There are many known reasons and one was done by Kempe. Our reason dues to Lemma~\ref{RGB1-thm:InOutCanal}(b) and (c). Especially the red canal system on $EP-\{v_0\}$ creates a non-crossing matching among black (green/blue) edges along the outer facet $\Omega:=v_1$-$v_2$-$\ldots$-$v_5$-$v_1$.   
   \begin{figure}[h]
   \begin{center}
   \includegraphics[scale=0.73]{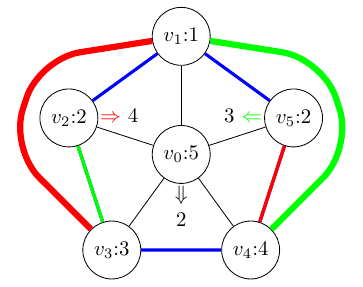}
   \includegraphics[scale=0.64]{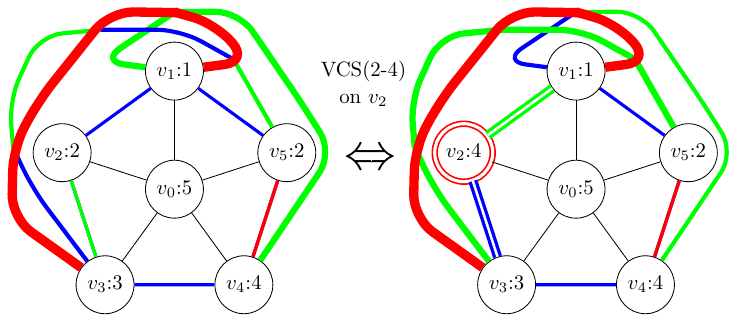}
   \end{center}
   \caption{Kempe's proof and the bug in Kempe's paper.} \label{fig:tangle}
   \end{figure} 
   
Also there are probably many (a \emph{cluster} of) such red chains $K_r|_{v_1}^{v_3}$ and green chains $K_g|_{v_1}^{v_4}$. We shall choose the red chain $K_r$ closest to $v_2$ (the rightmost) and the green chain $K_g$ closest to $v_5$ (the leftmost). This \underline{mandatory} but \underline{temporary} choice concerns the error of Kempe's original proof. Following this choice or ideal, we will claim the \emph{tangling property} that is important to our renewal approach.   

By the same technique which had been applied on the two graphs in Figure~\ref{fig:EPcannot}, Kempe used vertex-color-switching method to get a new coloring as $f(v_2)=4$ and $f(v_5)=3$ without changing the colors of the other three neighbors of $v_0$, namely $f(v_1)=1$, $f(v_3)=3$ and $f(v_4)=4$. Then Kempe finished the proof by setting $f(v_0)=2$. This proof seems perfect by referring the left graph in Figure~\ref{fig:tangle}; otherwise the referees at that time would not pass and then let it publish. Unfortunately, Percy Heawood found the bug in Kempe's paper after 11 years. Briefly \underline{we cannot do} two vertex-color-switching processes w.r.t.\  a red-connect component and a green-connect component \underline{at the same time}. The right two graphs in Figure~\ref{fig:tangle} are what really happens in $EP$; otherwise it is not a real $EP$: Two Kempe chains $K_r|_{v_1}^{v_3}$ and $K_g|_{v_1}^{v_4}$ cross each other shown by the middle graph in Figure~\ref{fig:tangle}. Not only this crossing, once we do the first vertex-color-switching process on the red-connected component containing $v_2$, which associates with the red Kempe chain $K_r|_{v_1}^{v_3}$, to get $f(v_2)=4$, immediately the green Kempe chain $K_g|_{v_1}^{v_4}$ will be destroyed and a new green Kempe chain $K_g|_{v_3}^{v_5}$ will show up; then the second vertex-color-switching process claimed by Kempe cannot fulfill. See the right two graphs in Figure~\ref{fig:tangle}. We use two double-lines and a double-circle to highlight the change on the pentagon.

Symmetrically, provided middle graph in Figure~\ref{fig:tangle}, if we first perform vertex-color-switching process on the green-connected component contain $v_5$, then the red Kempe chain $K_r|_{v_1}^{v_3}$ of the middle graph in Figure~\ref{fig:tangle} will be destroyed and a new red Kempe chain $K_r|_{v_2}^{v_4}$ turns out. Again, the second second vertex-color-switching process claim by Kempe cannot be done. Sorry, we do not offer the two graphs of changing before and after.  

   \begin{remark} 
Percy Heawood used the same idea from Kempe's paper but only perform one vertex-color-switching process to prove Five Color Theorem.
   \end{remark}

\section{The tangling property w.r.t.\ a degree 5 vertex in $EP$} \label{sec:tanglingProperty}
In this section we always set $v_0\in V(EP)$ with $\deg(v_0)=5$. The existence of the dual Kempe chains and the tangling property w.r.t.\ $(EP; v_0)$ are the starting point to transform Kempe's method to our renewal approach. 
  \begin{definition}   \label{def:dualKempeChains}
Let $EP \in e\mathcal{MPGN}4$ and $v_0\in EP$ with $\deg(v_0)=5$. The pair $(K_r, K_g)$ of two crucial chains demonstrated in Figure~\ref{fig:DualKempeChains}, i.e., two solid red and green curves, are called the \emph{dual Kempe chains}  w.r.t.\ $(EP; v_0)$ provided $\deg(v_0)=5$. Precisely we see the dual Kempe chains $(K_r|_{v_1}^{v_3}, K_g|_{v_1}^{v_4})$ w.r.t.\ $(EP; v_0)$. (Please, ignore all dashed lines at this moment.) The subgraph $EP-\{v_0\}$ is a 5-semi-MPG with its pentagon outer facet $\Omega:=v_1$-$v_2$-$\ldots$-$v_5$-$v_1$. By $\Omega$, this $EP$ is partitioned into two regions: $\Sigma$ (inside) and $\Sigma'$ (outside) with  $\Sigma\cap\Sigma'=\Omega$.
   \end{definition}
   
   \begin{definition}    \label{def:TanglingProperty}
Please, continue with the setting of Definition~\ref{def:dualKempeChains} which is demonstrated by Figure~\ref{fig:DualKempeChains} and the right two graphs in in Figure~\ref{fig:tangle}. Now we define the \emph{tangling property} that happens when we perform vertex-color-switching on $v_2$ (or $v_5$) \emph{along} the current $K_r|_{v_1}^{v_3}$ (or $K_g|_{v_1}^{v_4}$). The main code of the tangling property is: After vertex-color-switching, (A) $K_g|_{v_1}^{v_4}$ (or $K_r|_{v_1}^{v_3}$ respectively) will be destroyed and (B) a new Kempe chain $K_g|_{v_2}^{v_3}$ (or $K_r|_{v_2}^{v_4}$) will be created. The property guarantees that new dual Kempe chains $(K_r, K_g)$ w.r.t.\ $(EP; v_0)$ still exist. Please refer to the right two graphs, which are before switching and after, in Figure~\ref{fig:tangle}. 
   \end{definition}
   \begin{figure}[h]
   \begin{center}
   \includegraphics[scale=1]{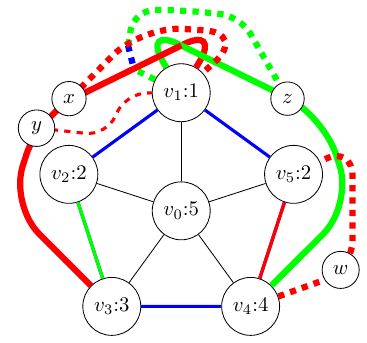}
   \end{center}
   \caption{The dual Kempe chains; ignoring all dashed lines first.} \label{fig:DualKempeChains}
   \end{figure}
  
The general setting for $(EP; v_0)$ and the five neighbors of 
$v_0$ in Figure~\ref{fig:DualKempeChains} is mandatory, where ``general'' means symmetry of vertex colors 1/2/3/4 as well as edge colors R/G/B and any rotation of the pentagon outer facet $\Omega$. Here we list the key properties for $(EP; v_0)$ that we observed in the last section:
   \begin{enumerate}
\item $EP-\{v_0\}$ is 4-colorable; so any 4-coloring function on $EP-\{v_0\}$ must has four different color assign to $v_1,\ldots,v_5$. (By Theorem~\ref{RGB1-thm:eMPG4}(b))
\item Due to (1), the edge coloring on $\Omega$ must be $T_{rgb}|_\Omega:=$ [red-blue-blue-green-blue] or its symmetry. Notice that [red-blue-blue-blue-green] makes $EP$ 4-colorable and it is a contradiction and impossible.  
\item There must exist the dual Kempe chains $(K_r, K_g)$.
\item The tangling property holds w.r.t.\ $(EP; v_0)$ and $(K_r, K_g)$.
\item The most important thing is that $T_{rgb}|_\Omega$ and $(K_r, K_g)$ must match each other.     
   \end{enumerate}
With help of $K_r$ or $K_r$, one can perform \emph{vertex-color-switching} according to Kempe's method. We are going to transfer Kempe's method to our new method: \emph{edge-color-switching}. 
  \begin{definition}   \label{def:Edge-CS}
Given an MPG or semi-MPG $M$ with an RGB-tiling $T_{rgb}=(T_{r},T_{g},T_{b})$ (coexisting triple), the process of \emph{edge-color-switching} on a red canal line $rCL$ of $T_{r}$ (or \emph{along} the left/right canal bank $rCL^l$/$rCL^r$) is to exchange edge-colors green and blue in between $rCL^l$ and $rCL^r$. After this process, we obtain a new and legal RGB-tiling $T'_{rgb}$ without changing $T_{r}$ and they are still coexisting.        
  \end{definition}

Let us use acronyms VCS and ECS to stand for ``vertex-color-switching'' and ``edge-color-switching'' respectively. In some circumstance, one VCS is equivalent to a combination of multiple ECS, and vice versa. We will explain this equivalence behind.

In the last section, we claimed to choose the closest red chain $K_r|_{v_1}^{v_3}$ to $v_2$, and choose the closest green chain $K_g|_{v_1}^{v_5}$ to $v_5$. The ones we choose are drew by two solid red/green lines and they intersect each other. Due to ``closest'', the thin red dashed-line connecting $y$ and $v_1$ does not exist; especially it has no intersection with $K_g|_{v_1}^{v_5}$ and no intersection will force $EP$ 4-colorable by Kempe's proof. We have the second meaning for ``closest'': Once these two closest dual chains intersect each other, any two red and green chains of same end-points shall intersect. Intersection is the minimum requirement to obey the tangling property, especially (A). The detail proof will be offered later. 

The author leave an important question: Besides degree 5, does there any other situation have tangling property?

\subsection{Vertex-color-switching vs edge-color-switching} \label{sec:ReplaceVwE}
Basically Kempe used red-connected component containing $v_2$ to perform VCS. Actually any red Kempe chain from $v_1$ to $v_3$ separate the two \emph{major} red-connected components that contain $v_2$ and $v_4$/$v_5$ respectively. Let us denoted these two components by $rC(v_2)$ and $rC(v_4;v_5)$. Any different $K_r$ from $v_1$ to $v_3$ in its own cluster can be a boundary working zone for VCS/ECS and $K_r$ surrounds $rC(v_2)$ tightly or loosely. For instance, the original $K_r|_{v_1}^{v_3}$ or $K'_r:=v_1$-(dashed red line)-$x$-$y$-(solid red line)-$v_3$ surrounds $rC(v_2)$ tightly or loosely respectively.

   \begin{lemma}  \label{thm:deg5tangling}
Let $(K_r, K_g)$ be any dual Kempe chains w.r.t.\ $(EP; v_0)$ provided $\deg(v_0)=5$. Then $K_r$ and $K_g$ must intersect each other. (We shall ignore their common endpoint $v_1$.)       
   \end{lemma}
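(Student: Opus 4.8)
The plan is to argue by contradiction: suppose there exist dual Kempe chains $K_r=K_r|_{v_1}^{v_3}$ and $K_g=K_g|_{v_1}^{v_4}$ w.r.t.\ $(EP;v_0)$ that are internally disjoint, i.e.\ share only the endpoint $v_1$. I would then show that disjointness lets us perform two \emph{independent} vertex-color-switching operations, which (as rehearsed in Section~\ref{sec:RGBKempeChainDeg5} for the forbidden chains $K_r|_{v_2}^{v_4}$ and $K_g|_{v_3}^{v_5}$) would produce a $4$-coloring of $EP$, contradicting $EP\in e\mathcal{MPGN}4$.

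First I would fix a $4$-coloring $f$ of $EP-\{v_0\}$ with the standard palette on the pentagon $\Omega$, namely $f(v_1)=1,f(v_2)=2,f(v_3)=3,f(v_4)=4,f(v_5)=1$ (this is the normalized form of the edge-pattern $T_{rgb}|_\Omega$ from item~(2) of the key-properties list). The chain $K_r$ is a $1$–$3$ red chain from $v_1$ to $v_3$ lying in $\Sigma\cup\Sigma'$; deleting it from $EP-\{v_0\}$ separates $EP-\{v_0\}$ into the red component $rC(v_2)$ containing $v_2$ on one side and $rC(v_4;v_5)\ni v_4,v_5$ on the other (this uses planarity plus the non-crossing matching structure from Lemma~\ref{RGB1-thm:InOutCanal}, which guarantees $K_r$ really does cut $\Omega$ the expected way). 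Likewise $K_g$ is a $1$–$4$ green chain whose removal separates $v_2,v_3$ (side $gC(v_2;v_3)$) from $v_5$ (side $gC(v_5)$). The crucial point of the proof is this: because $K_r$ and $K_g$ meet only at $v_1$, the set $rC(v_2)$ on which we would switch red (to recolor $v_2$ from $2$ to $4$) is \emph{disjoint} from $K_g$, hence that switch does not touch any vertex of $K_g$ and therefore leaves $K_g|_{v_1}^{v_4}$ intact; symmetrically, switching green on $gC(v_5)$ to recolor $v_5$ from $1$ to $3$ does not touch $K_r$. So the two operations genuinely commute, both can be carried out, and afterwards $v_2$ has color $4$ and $v_5$ has color $3$ while $v_1,v_3,v_4$ keep colors $1,3,4$; now the five neighbors of $v_0$ use only colors $\{1,3,4\}$, so we may set $f(v_0)=2$ and $EP$ is $4$-colorable — contradiction. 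Hence $K_r$ and $K_g$ must intersect at some vertex other than $v_1$.

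I expect the main obstacle to be the \emph{separation} claim — verifying rigorously that a Kempe chain $K_r$ from $v_1$ to $v_3$ actually disconnects $v_2$ from $\{v_4,v_5\}$ inside $EP-\{v_0\}$ (and the analogous statement for $K_g$), and that the relevant color-switching component lies entirely on the $v_2$-side. This is where planarity of $EP$, the Jordan-curve argument through the closed walk $v_1\!-\!K_r\!-\!v_3\!-\!v_2\!-\!v_1$, and the non-crossing matching of black/green/blue edges along $\Omega$ from Lemma~\ref{RGB1-thm:InOutCanal}(b)--(c) all have to be combined carefully; one must also rule out that $K_r$ or $K_g$ passes through $v_0$ itself (it cannot, as $v_0$ is not in $EP-\{v_0\}$) or sneaks around in $\Sigma'$ in a way that defeats the separation. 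The recoloring bookkeeping, once separation is in hand, is routine — it is exactly the single-switch argument already given for Figure~\ref{fig:EPcannot}, applied twice to two disjoint regions — so the write-up would devote most of its length to the topological separation lemma and then dispatch the recoloring in a short paragraph.
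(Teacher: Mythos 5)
Your argument takes essentially the same route as the paper's own proof: assume $K_r|_{v_1}^{v_3}$ and $K_g|_{v_1}^{v_4}$ are internally disjoint, observe that disjointness forces the relevant switching components to be disjoint from the opposite chain, so Kempe's two vertex-color-switchings can be carried out independently and $EP$ becomes 4-colorable, contradicting $EP\in e\mathcal{MPGN}4$; the paper states exactly this in three lines, and your identification of the Jordan-curve separation step as the only point needing care is also where the real content lies. One bookkeeping slip should be repaired: the normalization $f(v_5)=1$ is not a proper coloring, because $v_5$ is adjacent to $v_1$ on the pentagon $\Omega$ (the paper's convention is $f(v_5)=2$), and consequently ``recolor $v_5$ from $1$ to $3$'' by a \emph{green} switch is inconsistent, since $1$-$3$ is the red color class; with $f(v_5)=2$ the green ($2$-$3$) switch on $gC(v_5)$ recolors $v_5$ from $2$ to $3$, and the rest of your recoloring then goes through as in the paper.
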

   \begin{proof}
Suppose there are $K_r|_{v_1}^{v_3}$ and $K_g|_{v_1}^{v_4}$ without intersection. Because $K_r$ and $K_g$ are boundaries of $rC(v_2)$ and $gC(v_4)$ respectively, no intersection means $V(rC(v_2))\cap V(gC(v_4))=\emptyset$. And then Kempe's proof works; hence $EP \notin e\mathcal{MPGN}4$; this is a contradiction.    
   \end{proof}
\noindent
The non-empty overlapping area $V(rC(v_2))\cap V(gC(v_4))$ is complicate and hard to study, or even hard to draw it due to the tangling property.

Let us focus on $Q:=EP-\{v_0\}$ with R-tiling $T_r(Q)$. By $T_r$, we find three \emph{major} red-connected components: $rC(v_2)$, $rC(v_1;v_3)$ and $rC(v_4;v_5)$; and also two \emph{major} red canal lines $rCL(v_1v_2)$ and $rCL(v_3v_4)$. Please, refer to Figure~\ref{fig:Q} for the definition of these major parts. The pattern on Figure~\ref{fig:Q} for $(Q;T_r)$ is mandatory and offers a new point of view to see four-coloring problems through the method of edge-color-switching. Besides these three major red-connected components, there are also some minor red-connected components $rC_{1i}$, $rC_{2j}$ and $rC_{3k}$ inside $rC(v_2)$, $rC(v_1;v_3)$ and $rC(v_4;v_5)$ respectively.
   \begin{figure}[h]
   \begin{center}
   \includegraphics[scale=1]{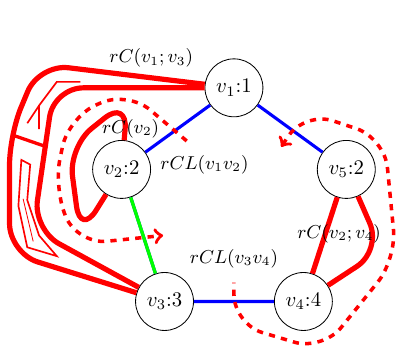}
   \end{center}
   \caption{$rC(\ast)$ and $rCL(\ast)$ w.r.t.\ $(EP-\{v_0\};T_{rgb})$} \label{fig:Q}
   \end{figure}

Furthermore, let us build a new graph call the \emph{red block graph} from $Q:=EP-\{v\}$ and $T_{rgb}$, denoted by $rBG(Q;T_{rgb})$ or $rBG(Q)$ for short, where the block (or vertex) set $V(rBG(Q))$ consists of all red-connected components $rC(\ast)$ and $rC_{ij}$ of $T_r$ and link (or edge) set $V(rBG(Q))$ consists of all red canal lines, such that for each red canal line $rCL(\ast)$ links the two sides of red-connected components that contain $rCL^l(\ast)$ and $rCL^r(\ast)$ respectively.   Please, see Figure~\ref{fig:treeofQ} for example.
   \begin{figure}[h]
   \begin{center}
   \includegraphics[scale=1]{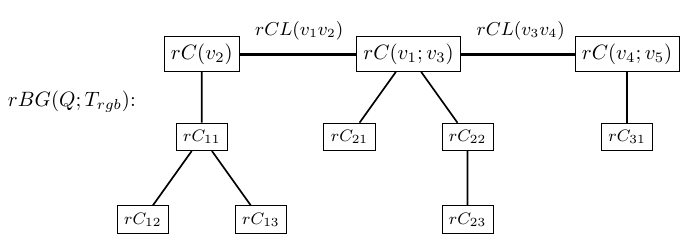}
   \end{center}
   \caption{The red block graph $rBG(Q;T_{rgb})$ obtained from $(Q;T_{rgb})$} \label{fig:treeofQ}
   \end{figure}
In Figure~\ref{fig:Q}, we do draw some details inside $rC(v_1;v_3)$, where $rC_{2j}$ for $j=1,2,3$ are inside $rC(v_1;v_3)$. We have $rC_{21}$ and $rC_{22}$ inside and near by $rC(v_1;v_3)$; also $rC_{23}$ is inside $rC_{22}$. Please, refer to Figure~\ref{fig:treeofQ} for the the consequence of these four blocks shown in $rBG(Q;T_{rgb})$. 

We admit that $rBG(Q;T_{rgb})$ is a tree first. The tree property is very important but we postpone the proof a while. Comparing with the original $rBG(Q;T_{rgb})$ in Figure~\ref{fig:treeofQ}, we use the following two graphs to demonstrate VCS on $rC(v_2)$ and ECS on $rCL(v_1v_2)$ respectively,
   \begin{figure}[h]
   \begin{center}
   \includegraphics[scale=0.87]{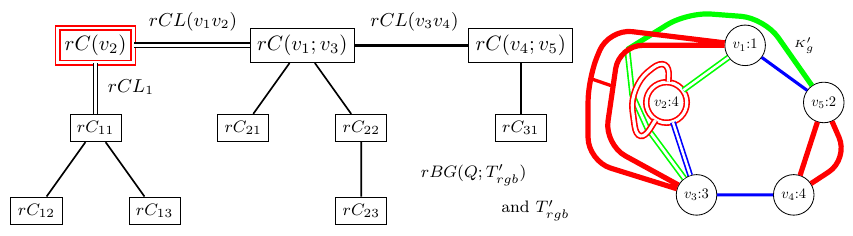}\\
   \includegraphics[scale=0.87]{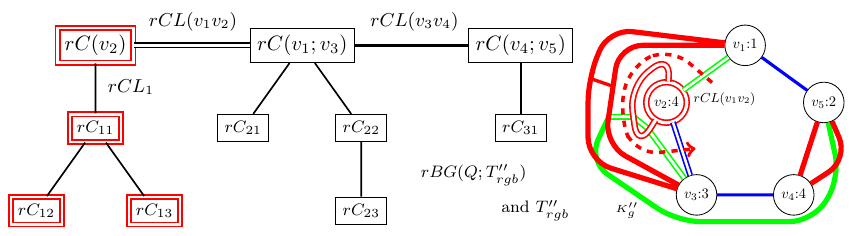}
   \end{center}
   \caption{Top line: VCS on $rC(v_2)$; Bottom line: ECS on $rCL(v_1v_2)$} \label{fig:treeofQ2}
   \end{figure}
where we use doublelines to represent the switching between 1/3  (or 2/4) for vertices in $Q$ as well as switching between green/blue for edges in $Q$. With help from Figure~\ref{fig:treeofQ2}, we have two important observations as follows:
   \begin{itemize}
\item[(V):] When we perform VCS on a single red-connected component $rC$, not only $rC$ has to change but also the links $rCL_i$ that incident to this block $rC$ need to change by ECS. This observation tells us: VCS operation can be replaced by ECS operation.     
\item[(E):] When we perform ECS on a single red canal line $rCL$, not only $rCL$ has to change but also all blocks $rC_j$ that are on one side of $rCL$ need to change by VCS. Notice that a graph without loop have perfect meaning of ``two sides'' of any single link or edge. Therefore, when $rBG(Q)$ is a tree, ECS operation can be replaced by VCS operation.         
   \end{itemize}
This observation is just what we said ``one VCS is equivalent to a combination of multiple ECS, and vice versa'' when $rBG(Q)$ is a tree.

   \begin{lemma}  \label{thm:BGisTree}
Let $Q$  be an MPG or an $n$-semi-MPG (not just $Q:=EP-\{v\}$) with an R-tiling $T_r$. The red block graph $rBG(Q;T_{rgb})$ must be a tree.    
   \end{lemma}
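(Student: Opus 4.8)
The plan is to prove the two defining properties of a tree for $rBG := rBG(Q;T_{rgb})$ separately: that it is connected, and that every one of its links is a bridge. Since a connected graph in which every edge is a bridge contains no cycle, these two facts together give that $rBG$ is a tree. The whole argument is topological and rests on the structure of the red canal system of $(Q;T_{rgb})$ that is already established in Part~I.

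First I would record the facts about red canal lines that I want to use, all contained in (or immediate from) Lemma~\ref{RGB1-thm:InOutCanal}: (i) each red canal line $rCL$ is a simple curve --- a closed Jordan curve when $Q$ is an MPG, and an arc with both endpoints on the outer facet when $Q$ is an $n$-semi-MPG; (ii) distinct red canal lines are pairwise disjoint as curves; and (iii) $rCL$ separates the sphere, respectively the closed disk in which the semi-MPG is drawn, into exactly two regions, one carrying the left bank $rCL^{l}$ and the other carrying the right bank $rCL^{r}$. Since a red-connected component is connected through red edges only, whereas a red canal line runs through non-red edges and faces, no red-connected component can cross $rCL$; hence each red-connected component lies entirely in one of the two regions cut off by $rCL$. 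In particular the two blocks linked by $rCL$ (the ones carrying $rCL^{l}$ and $rCL^{r}$) are distinct and lie on opposite sides of $rCL$, so $rBG$ has no loops and the phrase ``the two sides of a link'' used in observation~(E) is legitimate.

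Next, connectedness. Given two blocks $rC$ and $rC'$ of $rBG$, pick $u\in V(rC)$ and $u'\in V(rC')$; since $Q$ is connected there is a path $P$ from $u$ to $u'$ in $Q$. Tracking along $P$ which red-connected component each vertex lies in, the block can change only when $P$ traverses a non-red edge, and each such change is exactly a step along the unique red canal line separating the two blocks (this is what the definition of $rBG$ records). Hence $P$ induces a walk in $rBG$ from $rC$ to $rC'$, so $rBG$ is connected. Finally, every link is a bridge. Fix a link $rCL$ with endpoint blocks $rC_{l}\ni rCL^{l}$ and $rC_{r}\ni rCL^{r}$, and let $A\supseteq rC_{l}$ and $B\supseteq rC_{r}$ be the two regions into which $rCL$ cuts the surface. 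By~(ii) every other red canal line is disjoint from $rCL$, hence lies wholly in $A$ or wholly in $B$, and therefore links two blocks on the same side; and by the no-crossing remark every block lies wholly in $A$ or wholly in $B$. Consequently every walk in $rBG-rCL$ stays on one side, because consecutive blocks of such a walk share a link other than $rCL$ and so are on the same side. Thus no walk in $rBG-rCL$ joins $rC_{l}\subseteq A$ to $rC_{r}\subseteq B$, i.e.\ $rCL$ is a bridge. As $rCL$ was arbitrary, together with connectedness this shows $rBG$ is a tree.

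The only place where care is needed is at the interface with Part~I: one must confirm that items (i)--(iii) and the ``no red component crosses a canal line'' statement are exactly what is proved there, and that the $n$-semi-MPG case is genuinely covered --- there a red canal line may be an open arc, but with both endpoints on the boundary circle of the disk it still separates the disk into two regions, so the separation arguments above go through verbatim. Once those topological inputs are in hand, the remainder is just the elementary graph-theoretic fact that a connected graph every one of whose edges is a bridge is a tree.
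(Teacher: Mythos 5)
Your proof is correct and follows essentially the same route as the paper's: in both, the key point is that each red canal line (a closed ring, or an arc starting and ending at the same outer facet) separates the planar surface into two regions, so every link of $rBG(Q;T_{rgb})$ is a bridge. You additionally verify that $rBG(Q;T_{rgb})$ is connected, which the paper's one-line justification (``a graph bisected by every edge is a tree'') leaves implicit but which is genuinely needed to conclude ``tree'' rather than ``forest.''
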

   \begin{proof}
If a graph is bisected by any edge, then it must be a tree. Let $rCL$ be any edge of $rBG(Q;T_{rgb})$. This $rCL$ divide $Q$ into two disconnect regions, because $Q$ is a planar graph and $rCL$ is either a red canal ring or a red canal line which star at and end at a same outer facet.   
   \end{proof}
\noindent
For counterexamples in an $(n_1,n_2)$-semi-MPG, please refer to Figures~\ref{RGB1-fig:counterEX3}(B1), \ref{RGB1-fig:counterEX31}(B2) and~\ref{RGB1-fig:counterEX31}(B3). The observations (V) and (E) tell us: VCS and ECS are exchangeable if $rBG(Q;T_{rgb})$ is a tree. Since we focus on One Piece, we nearly assume the block graphs in our discussion are all trees. In the following discussion and example, we will demonstrate that ECS is much more convenient.

In view of chip-firing games on $rBG(Q;T_{rgb})$ of a tree structure:
   \begin{itemize}
\item[(V):]  A block is selected to chip-fire, then this block and its adjacent link shall switch between singleline and doubleline.        
\item[(E):] A link is selected to chip-fire, then this link and all blocks on one side of this link shall switch between singleline and doubleline. We will use Example~\ref{ex:ECStwoSide} to explain that choosing different side just causes another equivalent RGB-tiling.   
   \end{itemize}

\subsection{Synonym, equivalence and congruence}
\label{sec:SynEquivCong}
The three different red block graphs $rBG(Q;T_{rgb})$, $rBG(Q;T' _{rgb})$ and $rBG(Q;T''_{rgb})$ in the last subsection have no difference on their structure, because they share the same $T_r$. But they do have difference on G-/B-tilings. Comparing with the original $rBG(Q;T_{rgb})$, we use doublelines to indicate the changes by VCS on some $rC_i$ and ECS on some $rCL_j$ for $rBG(Q;T'_{rgb})$ and $rBG(Q;T''_{rgb})$. Even though the two operations given in Figure~\ref{fig:treeofQ2} have different affects on some blocks $rC_i$ and links $rCL_j$, the two results of change are the same in some sense, i.e., they both transform $(EP-\{v_0\},T_{rgb})$ shown as Figure~\ref{fig:treeofQ} to be the two right graphs in Figure~\ref{fig:treeofQ2}. Let us use $T'_{rgb}$ and $(K_r,K'_g)$ to denote the new things created by this VCS operation and also $T''_{rgb}$ and $(K_r,K''_g)$ created by this ECS operation. These two corresponding things are different in details; however both $K'_g$ and $K''_g$ are definitely from $v_3$ to $v_5$. In other words, $gBG(Q;T'_{rgb})$ and $gBG(Q;T''_{rgb})$ are the same in some sense; but they are totally different from the original green block graph $gBG(Q;T_{rgb})$ due to the tangling property.

We are now discussing about ``the same in some sense'' or ``the difference in certain levels'' that involves three general definitions. Let $M$ be an MPG or a semi-MPG, and $\mathcal{RGBT}(M)$ be the set of all RGB-tilings on $M$.
   \begin{itemize}
\item Synonym: Any $T_{rgb}\in\mathcal{RGBT}(M)$ has six \emph{synonyms}, including itself, by interchanging among R/G/B over whole graph $M$.  This relation of synonym, denoted by $\overset{\text{\tiny syn}}{=}$, is the most basic idea and it is too trivial to mention most of the time. In addition, any kind of synonyms caused by permutations of R/G/B shall be also denoted by $\overset{\text{\tiny syn}}{=}$. We also use $\langle T_{rgb}\rangle$ to denote the set of six synonyms of $T_{rgb}$. But sometimes we will even skip $\langle \cdot\rangle$. 
\item Equivalence: First, we shall accept the fundamental base on synonyms. The most important parts of $(EP;v_0)$ with $\deg(v_0)=5$ are $(K_r,K_g)$ and $T_{rgb}|_\Omega$. This two parts are the major \emph{skeleton} of any kind of $T_{rgb}(EP;v_0)$. In general, any two $T^A_{rgb}, T^B_{rgb}\in\mathcal{RGBT}(M)$ are \emph{equivalent}, denoted by $T^A_{rgb}\equiv T^B_{rgb}$, if they share the same skeleton such as the graph in Figure~\ref{fig:DualKempeChains} (ignoring all dashed lines), i.e., the same sketch for $(K_r,K_g)$ and $T_{rgb}|_\Omega$. The two right graphs in Figure~\ref{fig:treeofQ2} do have same skeleton and they provide another example: $T'_{rgb}\equiv T''_{rgb}$. {\bf It is important that this equivalence relation shall involve with  a given $\Omega$ and Kempe chains in $\Sigma'$. Different $\Omega$'s will establish different equivalence relations.} We will talk about this ``difference'' then. We will use  $[T_{rgb}]$ to denote the equivalence class that $\langle T_{rgb}\rangle$ belongs to. There is supplemental definition of equivalence given behind in Remark~\ref{re:EquivInSigma}.  
\item Congruence: First, this new relation bases on accepting $\equiv$ or $\overset{\text{\tiny syn}}{=}$. We have seen an example: $T_{rgb}$ and $T'_{rgb}$ (or $T_{rgb}$ and $T''_{rgb}$) in the last paragraph and also in the last subsection. The congruence relationship has its operational definition: In the working domain $\mathcal{RGBT}(M)$, two RGB-tilings $T^A_{rgb}$ and $T^B_{rgb}$ are \emph{congruent}, denoted by $T^A_{rgb} \cong T^B_{rgb}$, if $T^B_{rgb}$ can be obtained from $T^A_{rgb}$ by performing a sequence of VCS's and ECS's.  How to make VCS and ECS executable and closed in $\mathcal{RGBT}(M)$? For instance, we need to require any $rC$ (or $gC$, $bC$) having no odd-cycles to perform VCS. Also, after operation the result should still be an element in the working domain $\mathcal{RGBT}(M)$. We need to set up a stronger requirement on $\mathcal{RGBT}(M)$ and choosing a proper $M$ is what we need to do.
      \begin{itemize}
\item[(1)] Let $M$ be \underline{One Piece} which is an MPG or an $n$-semi-MPG. This is a good choice due to Theorem~\ref{RGB1-thm:4RGBtiling} which is our First Fundamental Theorem v1.
\item[(2)] Let $M$ be an MPG or an semi-MPG. This choice due to Theorem~\ref{RGB1-thm:4RGBtilingGeneralization} which is our First Fundamental Theorem v2. For this setting, we also need to restrict a new domain set $\mathcal{RGBT}^+(M)$ that consists of all $T_{rgb}$ such that along every $n_i$-gon outer facet the numbers of red, green and blue edges are all even if $n_i$ is even, and all odd if $n_i$ is odd (Theorem~\ref{RGB1-thm:4RGBtilingGeneralization}(c)).
\item[(3)] Let $M=EP-\{\ast\}$, where $\{\ast\}$ is dynamic and consists of several edges that are variable. Precisely we have a fixed cycle $\Omega$ in $EP$, and $\Sigma$ and $\Sigma'$ are two regions of $EP$ partitioned by $\Omega$ with $\Sigma \cap \Sigma'=\Omega$. In addition, the variable edge set $\{\ast\}$ is always inside $\Sigma$. Since $\Sigma'$ is One Piece, all rules shall follow item (1); Since $\Sigma$ has multiple outer facets, all rules shall follow item (2); Of course some special rules due to the combination of (1) and (2).  We will talk about it then.          
      \end{itemize}
   \end{itemize}

   \begin{remark}\label{re:Question} (Very important)
The foundation of synonym relation can be any 4-colorable graph $M$ and $\mathcal{RGBT}(M)$. The foundation of equivalence relation need to establish a certain skeleton; here we use $(K_r,K_g)$ on planar graph $M=EP-\{v_0\}$ and $T_{rgb}(M)|_\Omega$. The foundation of congruence relation is a 4-colorable \underline{planar} graph $M$ and $\mathcal{RGBT}(M)$; here we still use $M=EP-\{v_0\}$. Both synonym and congruence relations are defined by certain ECS operations, but equivalence relation is defined by the way (skeleton) we draw for $(K_r,K_g)$ and $T_{rgb}|_\Omega$. The crucial question come out as follows:
   \begin{center}
   \begin{tabular}{c c c}
$T^A_{rgb}$  & $\equiv$ & $T^B_{rgb}$\\
{\Large $\Downarrow$}, $\cong$ & {\qquad corresponding ECS \qquad} & {\Large $\Downarrow$}, $\cong$\\  
$T^{A'}_{rgb}$  &  $\equiv$\rlap{\Large ?} & $T^{B'}_{rgb}$   
   \end{tabular}
   \end{center} 
The answer is yes and we use the next subsection to persuade the reader. Notice that the corresponding ECS's has three different groups: (1) ECS on $rCL(v_1,v_2)$, (2) ECS on $rCL(v_3,v_4)$, and (3) any ECS on $rCL_k$ which is inside $rC(v_2)$, $rC(v_1;v_3)$ or $rC(v_4;v_5)$. It is interesting that after performing (3) the new RGB-tiling is equivalent to old one, not just congruent. 
   \end{remark}

\subsection{Let us learn ECS by examples} \label{sec:LearnECS} The right two graphs in Figure~\ref{fig:tangle} are closer to reality, but it is not easy (actually no way) to draw the real graph for any dual Kempe chains $(K_r,K_g)$ w.r.t.\ $(EP; v_0)$. In the rest of this paper, we will simply draw $(K_r,K_g)$ like the left graph in Figure~\ref{fig:tangle} to show the red-/green-connected property. However, we always pretend that the real $K_r$ and $K_g$ behind the graph do intersect each other and the tangling property always working.  

For the following two examples, we star with $(Q;T_{rgb})$ in Figure~\ref{fig:Q} as well as $rBG(Q;T_{rgb})$ in Figure~\ref{fig:treeofQ} and perform many different ways of ECS.
   \begin{example} \label{ex:ECStwoSide}
Observation (E) shows: performing ECS on $rCL(v_1v_2)$ shall simultaneously perform VCS on all blocks $rC_j$ on one side of $rCL(v_1v_2)$. In Figure~\ref{fig:treeofQ2} we choose the left side to simultaneously perform VCS. How about we choose the right side? No problem, the result $T^1_{rgb}$ shown in Figure~\ref{fig:learnECS} tell us $T^1_{rgb} \equiv T'_{rgb}\equiv T''_{rgb}$.   
   \begin{figure}[h]
   \begin{center}
   \includegraphics[scale=0.87]{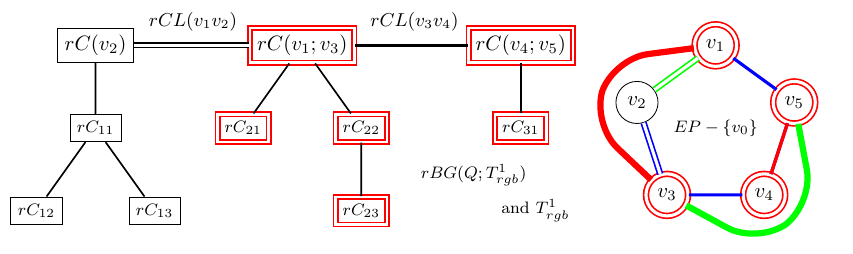}
   \end{center}
   \caption{ECS on $rCL(v_1v_2)$ again, but affecting the other side} \label{fig:learnECS}
   \end{figure}   
   \end{example}

We said that the blocks $rC(v_2)$, $rC(v_1;v_3)$, $rC(v_4;v_5)$, and the links $rCL(v_1v_2)$, $rCL(v_3v_4)$ are five major parts of $rBG(EP-\{v\})$ and they forms a line of length 2. Besides these major fives, $rC_{ij}$ and $rCL_k$ are inside  $rC(v_2)$, $rC(v_1;v_3)$ or $rC(v_4;v_5)$. 
   \begin{lemma} \label{thm:rCijrCLk}
Given $rBG(E-\{v_0\};T_{rgb})$, if we perform any combination of VCS on $rC_{ij}$, together with any combination of ECS on $rCL_k$ and obtain a new $T'_{rgb}$ on $E-\{v_0\}$, then the original $T_{rgb}|_\Omega$ and $T'_{rgb}|_\Omega$ are the same and then $(K'_r,K'_g)$ and $(K'_r,K'_g)$ of the same kind. Precisely $T_{rgb}\equiv T'_{rgb}$.
   \end{lemma}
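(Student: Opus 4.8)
\textbf{Proof proposal for Lemma~\ref{thm:rCijrCLk}.}

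The plan is to show that every operation in the allowed combination — a VCS on some minor block $rC_{ij}$ or an ECS on some minor canal line $rCL_k$ — leaves the skeleton of the RGB-tiling untouched, where the skeleton is the pair $(K_r, K_g)$ together with the boundary coloring $T_{rgb}|_\Omega$. Since equivalence $\equiv$ is defined precisely as "sharing the same skeleton," establishing skeleton-invariance for a single such operation suffices, and the general statement follows by induction on the length of the sequence of operations (each intermediate tiling still lies in the working domain by Lemma~\ref{thm:BGisTree} and the coexistence guarantees of Definitions~\ref{def:Edge-CS}).

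The key steps: First I would recall from Lemma~\ref{thm:BGisTree} that $rBG(Q;T_{rgb})$ is a tree, and from the discussion preceding it that the five major parts $rC(v_2)$, $rC(v_1;v_3)$, $rC(v_4;v_5)$, $rCL(v_1v_2)$, $rCL(v_3v_4)$ form a path of length~$2$ in this tree, with every minor block $rC_{ij}$ and minor line $rCL_k$ strictly \emph{inside} one of the three major blocks. Second, I would argue the $\Omega$-invariance: the pentagon $\Omega = v_1\text{-}v_2\text{-}\ldots\text{-}v_5\text{-}v_1$ lies entirely on the boundaries of the three major red-connected components, so none of its edges is a bank of any minor canal line $rCL_k$, and no vertex of $\Omega$ lies in any minor block $rC_{ij}$; hence a VCS on $rC_{ij}$ changes only vertex colors strictly interior to a major block (not on $\Omega$), and an ECS on $rCL_k$ swaps green/blue only between the two banks of $rCL_k$, which again does not meet $\Omega$. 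Therefore $T'_{rgb}|_\Omega = T_{rgb}|_\Omega$. Third, I would argue the $(K_r,K_g)$-invariance: the dual Kempe chains $K_r|_{v_1}^{v_3}$ and $K_g|_{v_1}^{v_4}$ serve (via Lemma~\ref{thm:deg5tangling} and the surrounding discussion) as the boundaries of the major components $rC(v_2)$ and $gC(v_4)$; since a VCS on $rC_{ij}$ or an ECS on $rCL_k$ affects only the \emph{interior} of a major red block — which is on one fixed side of each major canal line and does not touch the path $rC(v_2)\text{-}rCL(v_1v_2)\text{-}rC(v_1;v_3)\text{-}rCL(v_3v_4)\text{-}rC(v_4;v_5)$ structurally — the red Kempe chain $K_r$ (which runs along this major skeleton) is unchanged, and likewise the green chain $K_g$: the crucial point is that the tangling property is triggered only by VCS on $rC(v_2)$ or $rC(v_5)$ along $K_r$ or $K_g$ themselves, never by an operation confined to a minor block. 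Having both halves of the skeleton fixed, I conclude $T_{rgb}\equiv T'_{rgb}$, and consequently the dual Kempe chains $(K'_r,K'_g)$ obtained after the operation are "of the same kind" as $(K_r,K_g)$ (same endpoints, same crossing pattern).

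The main obstacle I anticipate is making precise the claim that $K_g$ is genuinely unaffected. Because of the tangling property, $K_g$ weaves through the overlapping region $V(rC(v_2))\cap V(gC(v_4))$, which the text itself calls "complicate and hard to draw," so I must be careful: a VCS on a minor block $rC_{ij}$ sitting inside $rC(v_2)$ does switch green$\leftrightarrow$blue on some edges incident to $rC_{ij}$'s canal line, and I need to confirm this local recoloring cannot sever or reroute $K_g$ across $\Omega$'s endpoints. The clean way to handle this is to work on the green side symmetrically: translate each VCS/ECS on the red block graph into the corresponding move on $gBG(Q;T_{rgb})$ via observations (V) and (E), note that a minor red operation corresponds to a move confined to a region bounded away from the major green path $gC(v_4)\text{-}\cdots\text{-}gC(v_5)$ (equivalently, away from $K_g$), and invoke the tree structure of $gBG$ from Lemma~\ref{thm:BGisTree} to see that $K_g$'s homotopy class rel $\{v_1,\dots,v_5\}$ is preserved. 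I would also explicitly contrast this with the genuinely skeleton-changing case — ECS on the \emph{major} lines $rCL(v_1v_2)$ or $rCL(v_3v_4)$ — to make transparent why only the minor operations are harmless, which is exactly the content flagged in Remark~\ref{re:Question}.
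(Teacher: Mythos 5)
The paper states Lemma~\ref{thm:rCijrCLk} without any proof, so there is nothing to compare line by line; judging your argument on its own merits, the first half is sound but the second half has a real gap. Your $\Omega$-invariance step is correct and is the easy part: every green/blue edge of $\Omega$ is crossed only by one of the two major canal lines $rCL(v_1v_2)$, $rCL(v_3v_4)$, while a VCS on a minor block $rC_{ij}$ or an ECS on a minor ring $rCL_k$ only toggles edges crossed by \emph{minor} canal lines (all links incident to a minor block are minor), so $T'_{rgb}|_\Omega=T_{rgb}|_\Omega$. Likewise $K_r|_{v_1}^{v_3}$ survives trivially because $T_r$ is never altered. The gap is your claim that $K_g|_{v_1}^{v_4}$ is ``genuinely unaffected'' because the operation is ``confined to a minor block.'' That is false as a local statement: an ECS on a minor red ring $rCL_k$ amounts to swapping $1\leftrightarrow 3$, $2\leftrightarrow 4$ on all vertices enclosed by the ring, and while edges with both endpoints inside keep their green/blue class, every edge crossing the ring flips green$\leftrightarrow$blue. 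So if the current green chain $K_g|_{v_1}^{v_4}$ dips into the enclosed disk, it is severed at each crossing; nothing local or homotopy-theoretic guarantees a green detour around the disk exists in the graph. Your appeal to ``$K_g$'s homotopy class rel $\{v_1,\dots,v_5\}$'' is not a proof, and you yourself flag that you cannot confirm the recoloring does not sever $K_g$.

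The repair is to stop trying to prove literal invariance of $K_g$ and instead use the extremality of $EP$, exactly as in the argument surrounding Figure~\ref{fig:EPcannot} and Lemma~\ref{thm:deg5tangling}: since $T'_{rgb}$ is a legal RGB-tiling on $EP-\{v_0\}$ with $T'_{rgb}|_\Omega=T_{rgb}|_\Omega$, its induced 4-coloring still assigns the pattern $1,2,3,4,2$ to $v_1,\dots,v_5$ (after the harmless global normalization), and if $v_1$ and $v_4$ were not green-connected in $T'_{rgb}$ one could perform a single VCS on the green component of $v_1$ to free a color for $v_0$ and 4-color $EP$, contradicting $EP\in e\mathcal{MPGN}4$. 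Hence $K'_g|_{v_1}^{v_4}$ exists with the same endpoints (though possibly a different route), which is all that ``same kind'' and the skeleton-based definition of $\equiv$ require. With that substitution your proof is complete; your closing contrast with ECS on the major lines $rCL(v_1v_2)$, $rCL(v_3v_4)$ (which do change $T_{rgb}|_\Omega$ and hence the class) is a worthwhile addition consistent with Remark~\ref{re:Question}.
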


This lemma is the main reason that we only draw $K_r|_{v_1}^{v_3}$ for red-connectivity, instead of $rC(v_1;v_3)$ as a component. We also relax the previous \underline{mandatory} but \underline{temporary} choice, because now we know what we really is red-/green-connectivity and all kind of $(K_r, K_g)$'s follow the tangling property for $(EP;v_0)$.  

   \begin{figure}[h]
   \begin{center}
   \includegraphics[scale=0.87]{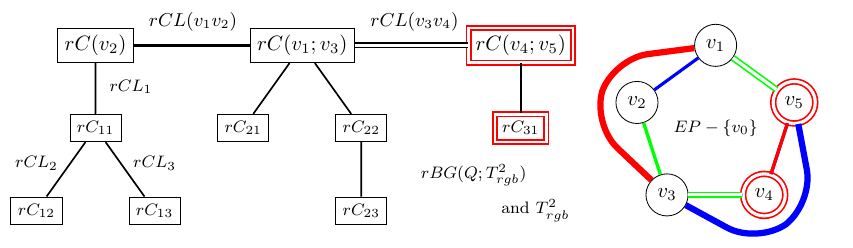}
   \includegraphics[scale=0.87]{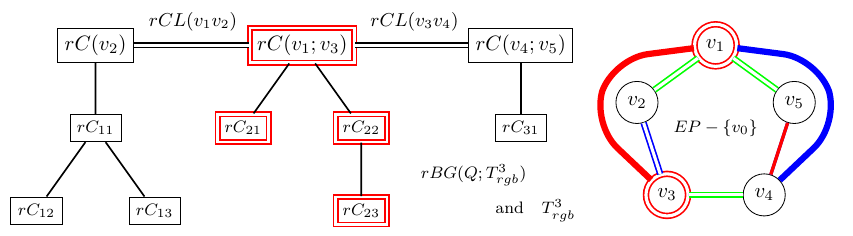}
   \end{center}
   \caption{The red block graphs for VCS on $rC(v_2)$ and ECS on $rCL(v_1v_2)$} \label{fig:learnECS2}
   \end{figure}   
   \begin{example}
We are curious about ECS on $rCL(v_1v_2)$ or $rCL(v_3v_4)$, or on them together. Performing ECS on $rCL(v_1v_2)$ is given in Subsection~\ref{sec:ReplaceVwE} and we obtain 
$T''_{rgb}$ in Figure~\ref{fig:treeofQ2}. There are  two demonstrations in Figure~\ref{fig:learnECS2}, where we perform ECS on $rCL(v_3v_4)$ and perform ECS on both $rCL(v_1v_2)$ and $rCL(v_3v_4)$. The second operation is equivalent to perform VCS on $rC(v_1;v_3)$ by Lemma~\ref{thm:rCijrCLk}. Clearly, $T^2_{rgb}\equiv  T^1_{rgb} \equiv T'_{rgb}\equiv T''_{rgb}$, even though we have $v_3$ and $v_5$ blue-connected rather than green-connected. (Please refer to synonym in the last subsection.)
As for the second operation, we obtain $T^3_{rgb}$ and clearly $T^3_{rgb}\equiv  T_{rgb}$. 
    \end{example}

   \begin{example}
Notice that $T^3_{rgb}$ and $T_{rgb}$ in the last example are not synonyms, because the real synonym of $T_{rgb}$ with $T_r$ fixed need to perform ECS on all $rCL(\ast)$ and $rCL_\ast$. In Figure~\ref{fig:learnECS3} we do offer $T^4$ to be such a synonym of $T_{rgb}$.
   \begin{figure}[h]
   \begin{center}
   \includegraphics[scale=0.87]{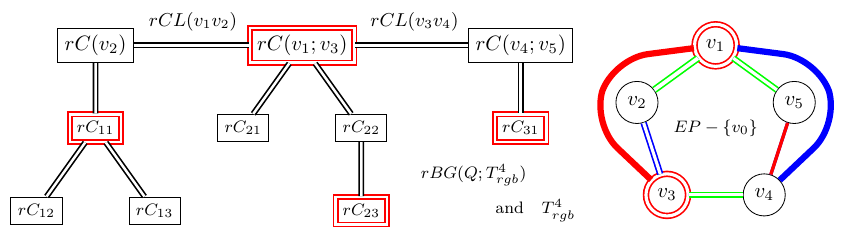}
   \end{center}
   \caption{$T^4$: a synonym of $T_{rgb}$} \label{fig:learnECS3}
   \end{figure}   
   \end{example}

With these examples, we can make a conclusion on any provided RGB-tiling $T_{rgb}$ on  $EP-\{v_0\}$ with $\deg(v_0)=5$ and $rBG(EP-\{v_0\};T_{rgb})$ shown in Figure~\ref{fig:Q} as follows:  
   \begin{enumerate}
\item Since VCS and ECS can be substituted by each other, we could only focus on ECS. If we fixed a $T_{r}$ without red odd-cycles, then there are $2^N$ different coexisting RGB-tiling induced by this R-tiling, where $N$ is the total number of red canal lines $rCL_i$ including both rings and paths. For $EP-\{v_0\}$, we have $N\ge 2$.
\item Among these $2^N$ different coexisting RGB-tilings w.r.t.\ our fixed $T_{r}$, which is generated by the original $T_{rgb}$, we are interested in congruence classes. For congruence ``$\cong$'', if we fixed $T_{r}$, then $T_{rgb}$ only has the other congruent $T'_{rgb}$. For all we met such as $T''_{rgb}, T^1_{rgb}, \ldots, T^4_{rgb}$, they are either synonyms of $T_{rgb}$ or $T'_{rgb}$ or in equivalence ``$\equiv$''.
\item Only performing ECS on $rCL(v_1v_2)$ or $rCL(v_3v_4)$, we can exchange between $[T_{rgb}]$ and $[T'_{rgb}]$. However, performing ECS on both $rCL{v_1v_2}$ and $rCL{v_3v_4}$ exchanges nothing between $[T_{rgb}]$ and $[T'_{rgb}]$. This is why we only have two congruence classes if $T_{r}$ is fixed. This result provides the final answer for Remark~\ref{re:Question}.
\item Provided RGB-tiling $T_{rgb}$, we can also draw $gBG(EP-\{v_0\};T_{rgb})$, which is a symmetric graph of $rBG(EP-\{v_0\};T_{rgb})$ in Figures~\ref{fig:DualKempeChains} and~\ref{fig:Q}. So (1), (2)and (3) hold for green version.
\item Provided Figures~\ref{fig:DualKempeChains}, there is no corresponding blue version, because the edge coloring of $T_{rgb}|_\Omega$ shows blue is unique w.r.t.\ red and green. 
   \end{enumerate}

\subsection{Our next step: $EP-\{e\}$ vs $EP-\{v_0\}$} 
Now we use three graphs in Figure~\ref{fig:renew} to extend the idea of R/G/B Kempe chains. These three graphs are special enough to demonstrate the benefit obtained from the new concept using RGB-tilings.   
   \begin{figure}[h]
   \begin{center}
   \begin{tabular}{ c c c}
   \hspace*{-10pt}
   \includegraphics[scale=.91]{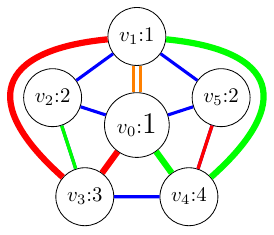}\
   \hspace*{-5pt}
   \includegraphics[scale=.91]{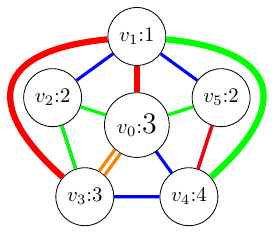}\
   \hspace*{-5pt}
   \includegraphics[scale=.91]{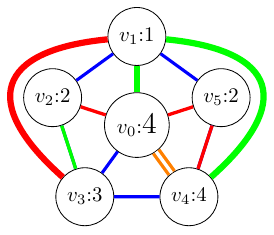}
      \end{tabular}
   \end{center}
   \vspace*{-10pt}
   \caption{Three renewal graphs for R/G/B Kempe chains and $(EP; v_0)$} \label{fig:renew}
   \end{figure}

The original Kempe's point of view focus on  $EP-\{v_0\}$ which is a $5$-semi-MPG with a pentagon outer facet $\Omega:= v_1$-$v_2$-$v_3$-$v_4$-$v_5$-$v_1$ showed as the right graph in Figure~\ref{fig:tangle}. By $\Omega$, this $EP$ is partitioned into two regions: $\Sigma$ (inside) and $\Sigma'$ (outside) with  $\Sigma\cap\Sigma'=\Omega$. Both $\Sigma$ and $\Sigma'$ are 5-semi-MPG's. By the previous general setting Co[$v_1$:1, $v_2$:2, $v_3$:3, $v_4$:4, $v_5$:2] (Co and $f$ are the same thing), we still have the final vertex $v_0$ to color. If we follow the rule of map-coloring, then we must color $v_0$ by the unwelcome color 5. However, this time we choose to obey the rule of only four colors by ignoring a particular edge inside $\Sigma$, while everything in $\Sigma'$ is unchanged. We set the the first graph in Figure~\ref{fig:renew} with Co[$v_0$:1] and then obtain an RGB-tiling on $EP-\{v_0v_1\}$, where the yellow double-line\footnote{This double-line is actually orange color because yellow color in not easy to see for publications.}, namely $v_0v_1$, is the \emph{abandoned edge} at this moment. Notice that the four edges surrounding $v_0v_1$ are all blue, then we name this $v_0v_1$-diamond \emph{Type A}.  Here we demonstrate a new way to realize the dual Kempe chains, namely $(K_r|_{v_0}^{v_1}, K_g|_{v_0}^{v_1})$, which are a little bit longer than the corresponding pairs described in  Definition~\ref{def:dualKempeChains}. By assigning red, green or blue color to that yellow double-line, we will create at least an odd-cycle of the same color, namely $K_r\cup \{v_0v_1\}$, $K_g\cup \{v_0v_1\}$ or two triangles of blue color. Triangles are trivial odd-cycles, so we ignore them most of time and only focus on non-trivial odd-cycles. 

The rest two graphs in Figure~\ref{fig:renew} are obtained by treating $v_0v_3$ and $v_0v_4$ as \emph{abandoned edges} respectively. A little bit different is that the four edges surrounding $v_0v_3$ (or $v_0v_4$) are two blue and two green (red respectively).  We name this kind of $v_0v_3$-diamond as well as $v_0v_4$-diamond \emph{Type B}. The crucial concept is that the middle graph has one Kempe chain $K_r|_{v_0}^{v_3}$ and the right graph has one Kempe chain $K_g|_{v_0}^{v_4}$.

   \begin{definition}
Let $EP \in e\mathcal{MPGN}4$ with $\deg(v_0)=5$. Referring to the first graph in Figure~\ref{fig:renew}, we define $(K_r|_{v_0}^{v_1}, K_g|_{v_0}^{v_1})$ to be the \emph{dual Kempe chains} w.r.t.\ $(EP; v_0v_1)$ in \emph{Type A}. Without change edge coloring in $\Sigma'$, referring to the second graph in Figure~\ref{fig:renew}, we define $K_r|_{v_0}^{v_3}$ to be the \emph{Kempe chain} w.r.t.\ $(EP; v_0v_3)$ in \emph{Type B}.  For each of the three graphs, we call the diamond with yellow double-line the \emph{$e$-diamond} in $EP$.
   \end{definition}

   \begin{remark}
The surrounding four edges of $e$-diamond being same color is the main characteristic of Type A. Type B has two different colors for the surround four edges of $e$-diamond: two edges in the north-$\wedge$ and the other two in the south-$\vee$ that have same color. 
   \end{remark}
   
   \begin{remark} \label{re:EquivInSigma}
(Important) Because 4-semi-MPG $Q:=EP-\{e\}$ is 4-colorable for any $e\in E(EP)$, there exists at least an RGB-tiling on $Q$. By Figure~\ref{fig:renew}, we see one Type A and two Type B RGB-tilings on $Q:=EP-\{\ast\}$, where $\{\ast\}$ consists of only one  edge as variable $e$. We see $e$ can be $v_0v_1$, $v_0v_3$ and $v_0v_4$. These three graphs coexist; so, are they three synonyms? Are they equivalent? Definitely they do not involve congruence. For a fixed $T_{rgb}|\Omega$ and a fixed edge-color-skeleton $(K_r,K_g)$, we shall say these three graphs equivalent. Now we shall use this supplement to claim our standard operating procedure to build on relation of equivalence: 
   \begin{enumerate} 
\item Let $EP \in e\mathcal{MPGN}4$, $\Omega$ be a cycle in $EP$, and $\Sigma$, $\Sigma'$ defined as usual. First we pick any $e_0$ inside $E(\Sigma)$ as well as an $e_0$-diamond, and then develop a Type A RGB-tiling $T_{rgb}$ on $EP-\{e_0\}$.  It is better that one of $e_0$'s two end vertices is degree 5 or 6. This is exactly the left graph in Figure~\ref{fig:renew}. Now we have at least a dual Kempe chains $(K_r,K_g)$ and also $T_{rgb}|_\Omega$ is now fixed.
\item According to this fixed $T_{rgb}|_\Omega$, we can develop new RGB-tilings on $T^i_{rgb}$ on $\Sigma-\{\ast\}_i$ for $\{\ast\}_i$ consisting of a single edge $e_i$ or even more edges from $E(\Sigma)$. This is exactly the right two graphs in Figure~\ref{fig:renew} and  Figure~\ref{fig:renew2} behind.
\item Notice that we must have $T^i_{rgb}|_\Omega = T_{rgb}|_\Omega$. For $T^i_{rgb}(\Sigma-\{\ast\}_i)$, we might develop some new R/G/B Kempe chain in $\Sigma'$ who should not contradict with each other, especially with the original  $(K_r,K_g)$. Sorry! No new R/G/B Kempe chains appear in Figures~\ref{fig:renew} and~\ref{fig:renew2}.
\item All together, we have the \emph{skeleton}: $T_{rgb}|_\Omega$, all feasible R/G/B Kempe chains in $\Sigma'$ and $T_{rgb}|_\Sigma$, $T^i_{rgb}|_\Sigma$ to form an equivalent class, denote by $[T_{rgb}]$
   \end{enumerate}   
   \end{remark}
\noindent
Why we need equivalence relation? Because all properties or proofs in this paper depend on the skeleton of $T_{rgb}$. If it is right for $T_{rgb}$, then it is right for $[T_{rgb}]$. 
     
   \begin{remark}
Without change edge coloring in $\Sigma'$, let us set Co[$v_0$:2]. 
Please see Figure~\ref{fig:renew2}. This way will creates two abandoned edges, namely $v_0v_2$ and $v_0v_5$. Notice that $v_0v_2$- and $v_0v_5$-diamonds are both Type B. 
   \begin{figure}[h]
   \begin{center}
   \includegraphics[scale=.91]{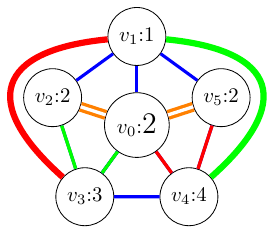}
   \end{center}
   \caption{Two yellow double-lines in $\Sigma$} \label{fig:renew2}
   \end{figure}
However, the surround edges are involving 3 different edge-colors and this time no non-trivial odd-cycles come out by assigning both abandoned edges either red, green or blue, or even mixed with $v_0v_2$ red and $v_0v_5$ green. Thus, we obtain no benefit by setting Co[$v_0$:2] in this case. However, it does not mean that having more abandoned edges at the same time is worthless. What we really care about is any Kempe chain that crosses odd number of abandoned edges.  
   \end{remark}

   \begin{remark}
When we have a Type A $e$-diamond as the left graph in Figure~\ref{fig:renew}, we might want to replace the yellow double-line by red color (or green).  We actually treat the provided RGB-tiling on $Q:=EP-\{e\}$ as an R-tiling $T_r(EP-\{e\})$ which definitely has no odd-cycles. At this moment, green and blue colors are treated as black. 
Replacing yellow by red color will create a new red odd-cycle, because 
now $T_r$ is well defined on $EP$ as an MPG, and then let us refer to Theorem~\ref{RGB1-thm:4RGBtiling}(c).  As for this Type A $e$-diamond, we would not replace the yellow double-line by blue. Even though doing this will reach two trivial blue triangles, they reveal no extra information.  As for the middle (right) graph in Figure~\ref{fig:renew}, we have a Type B $e$-diamond; this time we might want to replace the yellow double-line only by red color (or only by green).
   \end{remark}   
   
The original Kempe chains is w.r.t.\ $(EP; v)$  for a vertex $v$ with $\deg(v)=5$ and our renewal Kempe chains is w.r.t.\ $(EP; e)$ for any edge $e$ in $EP$, while the two end vertices of $e$ need no extra requirement due to Theorem~\ref{RGB1-thm:eMPG4}(b). This subsection or this whole section has paid attention on the connection of Kempe's method and our renewal way. In the next section and in the rest of our study, we will exam more detail and give more properties about R/G/B Kempe chains as well as Type A and Type B $e$-diamonds. 

\section{$e$-diamond everywhere in $EP$} \label{sec:ediamond}

In this section we investigate a general $e$-diamond with any fixed $e\in E(EP)$. Theorem~\ref{RGB1-thm:eMPG4} and Theorem~\ref{RGB1-thm:4RGBtiling} are the top guidelines of this section. As the author reviewed and re-wrote this article $n$ times, the tune of ``Everybody wants to rule the world'' by Tears for Fears, a pop rock band from England, was resonating. Yes, the main theme of this section is ``Every $e$-diamond can rule its world: $EP$.''
   \begin{figure}[h]
   \begin{center}
   \begin{tabular}{ c c c }
   \includegraphics[scale=1.2]{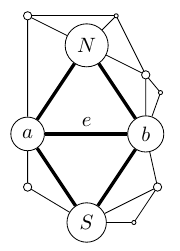}\hspace{15pt}
   \includegraphics[scale=1.2]{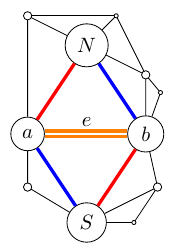}\hspace{15pt}
   \includegraphics[scale=1.2]{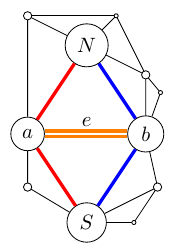}
   \end{tabular}
   \end{center}
   \caption{$e$-diamond, and RGB-tilings of Types C, D on $EP-\{e\}$} \label{fig:impossible}
   \end{figure}

Let us denote the four vertices surrounding the $e$-diamond by $a,b,N,S$ and $e:=ab$. Around this $e$-diamond, the main structure of $EP$ look like the first graph in Figure~\ref{fig:impossible}. Now we try to arrange an RGB-tiling on $Q:=EP-\{e\}$ which definitely exists by Theorem~\ref{RGB1-thm:eMPG4}(b). According to Lemma~\ref{RGB1-thm:evenoddRGB}(b), an RGB-tiling on this 4-semi-MPG $Q$ shall present only one edge-color or two different edge-colors in pairs along the outer facet $\Omega:=N$-$a$-$S$-$b$-$N$. By symmetry or synonym relation, it does matter which one or which two colors are presented. 

   \begin{remark}
Every claim, property or theorem must consider all synonyms behind, i.e., red, green and blue are symmetric and exchangeable. When it comes to synonym relation, we shall also exam the new equivalence relation for this new and general situation. Here we have $\Omega:=N$-$a$-$S$-$b$-$N$ and $\Sigma$ is exactly the $e$-diamond. Please, refer to Remark~\ref{re:EquivInSigma} for more details about building up equivalence relation.
   \end{remark}

First things first, we exclude the two types of RGB-tilings shown as the right two graphs in Figure~\ref{fig:impossible} from $\mathcal{RGBT}(EP-\{e\})$, because they are impossible for $EP$ as an extremum. We simply assign green color to replace the yellow double-line, then we get an RGB-tiling on $EP$.  Assigning $e$ green color causes no green odd-cycle. The reason comes from Lemma~\ref{RGB1-thm:evenoddRGB}(b) applying on this RGB-tiling for 4-semi-MPG $Q$ (not for $EP$). Suppose there is a green path $P_g|_{a}^{b}$. This path together with the 2-path $a$-$N$-$b$ form a cycle in $Q$. The numbers of red, green and blue edges along this cycle are all odd; so the length of $P_g|_{a}^{b}$ is odd and $P_g|_{a}^{b} \cup \{e\}$ is an even-cycle.  Actually, we have another simple way to prove it. Just consider these two graphs with $e$ colored by green as R-tilings on $EP$ (not just for $Q$) because the two red edges together with their two red-triangles (red half-tiles) perfectly share $e$-diamond, and then follow Theorem~\ref{RGB1-thm:4RGBtiling}(a) and (c).

After ruling out the above two types, there are the rest two types of RGB-tilings for 4-semi-MPG $Q$ remained.  We call the two remained ones by \emph{Types A and B} (see Figure~\ref{fig:AtypeBtype00}), and we call the ones ruled out by \emph{Types C and D} (see Figure~\ref{fig:impossible}). All these Types have their own synonyms and equivalence classes; while the four graphs in the two figures are just representations. Now we shall investigate Types A and B.
   \begin{figure}[h]      
   \begin{center}
   \includegraphics[scale=1.2]{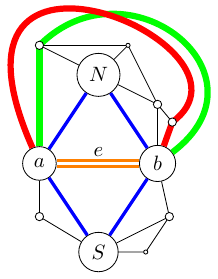}
   \hspace*{20pt}
   \includegraphics[scale=1.2]{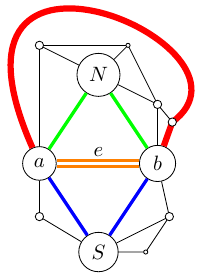}
   \end{center}
   \caption{Type A and Type B RGB-tilings for $EP-\{e\}$}  \label{fig:AtypeBtype00} 
   \end{figure}      

Is there any blue non-trivial odd-cycle? We don't know and most of time we don't need to know. 
In this Type A, the red path and the green path of even length are so called the \emph{dual Kempe chains} $(K_r|_a^b, K_g|_a^b)$ w.r.t.\ $(EP;e)$. Do $(K_r|_a^b, K_g|_a^b)$ have tangling property? They do if $\deg(a)=5$ or $\deg(b)=5$.  We are unsure about this question if  $\deg(a)>5$ and $\deg(b)>5$.  As a representative of Type B, the right graph has only one \emph{Kempe chain} $K_r|_a^b$ guaranteed.

   \begin{theorem}[The primitive Theorem: $e$-iamond of Type A or Type B] \label{thm:EPediamond}
Given $EP\in e\mathcal{MPGN}$, each of the following properties is a necessary condition for $(EP;e)$, where $e:=ab$ be any edge in $EP$ and $Q:= EP-\{e\}$.
   \begin{enumerate}
\item[(a)] All RGB-tilings\footnote{Theorem~\ref{RGB1-thm:eMPG4}(b) guarantees this set non-empty.} on 4-semi-MPG $Q$ can be sorted into two types (or equivalence classes): Type A and Type B shown as in Figure~\ref{fig:AtypeBtype00}. 
\item[(b)] The main characteristics of Type A including: (b1) All four edges surrounding $e$ are the same color, say blue. (b2) There are the dual Kempe chains $(K_r|_a^b, K_g|_a^b)$ w.r.t.\ $(EP;e)$. (b3) The lengths of $K_r|_a^b$ and $K_g|_a^b$ are both even.
\item[(c)] The main characteristics of Type B including: (c1) The four edges surrounding $e$ have two colors, say green and blue, with same color on the north-$\wedge$ and the south-$\vee$ of $e$. (c2) There is a single Kempe chain $K_r|_a^b$  w.r.t.\ $(EP;e)$. (c3) The length of $K_r|_a^b$ is even.
   \end{enumerate}
   \end{theorem}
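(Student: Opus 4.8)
The plan is to organize the argument around the two external tools quoted in the excerpt: Lemma~\ref{RGB1-thm:evenoddRGB}(b) (parity of the three edge-colors along an outer facet of a semi-MPG) and Theorem~\ref{RGB1-thm:4RGBtiling}(a),(c) (an R-tiling on an MPG coexists with a genuine RGB-tiling iff it has no non-trivial monochromatic odd cycle). Part~(a) I would obtain first: since $Q:=EP-\{e\}$ is a $4$-semi-MPG with quadrilateral outer facet $\Omega=N\text{-}a\text{-}S\text{-}b\text{-}N$, Lemma~\ref{RGB1-thm:evenoddRGB}(b) forces the edge-coloring restricted to $\Omega$ to use either one color on all four edges or exactly two colors, each appearing an even number of times. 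Up to synonym this leaves only a short finite list of patterns for $T_{rgb}|_\Omega$; the ``all four the same'' case is Type A, and among the two-color cases the only one not already excluded (Types C and D are ruled out in the discussion preceding the theorem, by assigning the third color to the abandoned edge and invoking Theorem~\ref{RGB1-thm:4RGBtiling}(c) to manufacture a $4$-coloring of $EP$, contradicting $EP\in e\mathcal{MPGN}4$) is the one with the matching pair on the north-$\wedge$ and south-$\vee$, which is Type B. So (a), (b1), (c1) are really a single case analysis on $\Omega$ plus the already-established exclusion of Types C, D.

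Next I would treat the existence of the Kempe chains, (b2) and (c2). In Type A all four edges around $e$ are blue, so green and blue each must ``enter'' $a$ and $b$ along the two non-abandoned edges; the argument is the same one used throughout Section~\ref{sec:RGBKempeChainDeg5}: if there were no red path $K_r|_a^b$ in $Q$, then $a$ and $b$ lie in distinct red-connected components, so we may recolor the abandoned edge $e$ red without creating a red odd cycle, and by Theorem~\ref{RGB1-thm:4RGBtiling}(a) this gives a legitimate RGB-tiling of $EP$ as an MPG — equivalently a $4$-coloring of $EP$ — contradiction. Hence $K_r|_a^b$ exists, and symmetrically $K_g|_a^b$ in Type A. In Type B only the color that appears on neither the north nor the south pair (``red'', say) can be consistently extended across $e$, so the same obstruction argument yields exactly the single guaranteed chain $K_r|_a^b$; the other two colors already ring $e$ in a way that blocks a second forced chain, which is why Type B carries only one.

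Finally the parity claims (b3) and (c3). Here I would run the length computation sketched in the paragraph that rules out Types C and D: take the Kempe chain $K_r|_a^b$ together with one of the two $2$-paths through the $e$-diamond, say $a\text{-}N\text{-}b$, forming a cycle $C$ in the semi-MPG $Q$. The two edges $aN$ and $Nb$ are blue in Type A (and in Type B they are the color of the north-$\wedge$, which is not red); applying Lemma~\ref{RGB1-thm:evenoddRGB}(b) to the RGB-tiling of $Q$ along the boundary cycle $C$ forces the number of red edges on $C$ to have the prescribed parity, and since exactly the $K_r$-portion of $C$ carries red edges, $|K_r|_a^b|$ comes out even. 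The analogous cycle with $a\text{-}N\text{-}b$ and $K_g|_a^b$ gives $|K_g|_a^b|$ even in Type A. Alternatively, and more cleanly, recolor $e$ with the color of $K_r$ (legitimate by the previous paragraph's contradiction-avoidance only up to the odd-cycle it creates): $K_r|_a^b\cup\{e\}$ is then a red cycle in $EP$, and by Theorem~\ref{RGB1-thm:4RGBtiling}(c) it must be an \emph{odd} cycle (a non-trivial one, else $EP$ would be $4$-colorable), whence $|K_r|_a^b|$ is even. I expect the main obstacle to be purely bookkeeping: making the finite case analysis on $\Omega$ airtight — in particular checking that every two-color even pattern other than the Type B one really does collapse into Types C or D after a synonym, so that nothing is missed — and being careful that the parity statement in Lemma~\ref{RGB1-thm:evenoddRGB}(b) is being applied to the correct (even-length, since $\Omega$ is a $4$-gon) boundary cycle each time.
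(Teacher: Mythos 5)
Your proposal is correct and follows essentially the same route as the paper: part (a) by the parity constraint on $\Omega$ from Lemma~\ref{RGB1-thm:evenoddRGB}(b) together with the already-established exclusion of Types C and D, and parts (b2), (b3), (c2), (c3) by recoloring the abandoned edge $e$ with the missing color and invoking Theorem~\ref{RGB1-thm:4RGBtiling} to force a monochromatic odd cycle through $e$, i.e.\ an even-length Kempe chain from $a$ to $b$. Your write-up is in fact more explicit than the paper's (which compresses the existence and parity claims into the single sentence ``there must be a red odd-cycle passing $e$''), but no new idea is introduced and none is missing.
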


   \begin{proof}
(a): Since $Q:=EP-\{e\}$ is 4-colorable and then an RGB-tiling $T_{rgb}(Q)$ must exist. We have already rule out Types C and D; so only Types A and B remained under synonym relation. Also the claims (b1) and (c1) are true.   

(c2) and (c3): For Type B, we can replace the yellow double with a red edge and then obtain an extended R-tiling on whole $EP$. Now this new R-tiling cannot induce a 4-coloring function on $EP$, so there must a red odd-cycle passing $e$. Therefore, (c2) and (c3) are true.   

(b2) and (b3): We have the same way to these two. Additionally, we can replace the yellow double with a green edge. 
   \end{proof}

   \begin{remark} \label{re:KnotAsSimpleAs}
Let us refer to the two graphs in Figure~\ref{fig:AtypeBtype00}. A Kempe chain $K_\ast$ is not as simple as a single path. It is possible that $K_r|_\alpha^\beta$ represents a bunch of red paths from $\alpha$ to $\beta$ and then many red canal rings $rCL$ lay inside 
$K_r|_\alpha^\beta$. For instance, Both $K_r|_a^b$ in Type A and Type B graphs actually represent a red-connected component that contains both vertices $a$ and $b$. In view of components, we shall denote it by $rC(a;b)$, and there are also components $rC(N)$, $rC(S)$ and $rC(S)_{ij}$. The two major red canal lines are $rCL(aN)$ and $rCL(aS)$. All these blocks and links will make up the red block graph. Of course, there is the green block graph for Type A, but not for Type B according to the two graphs in Figure~\ref{fig:AtypeBtype00}.         
   \end{remark}

   \begin{theorem} \label{thm:EPediamond2}
Given $(EP;e)$ as in Theorem~\ref{thm:EPediamond}, both RGB-tilings of Types A and B exist. Furthermore, any Type A RGB-tiling is congruent to a Type B one, and vice versa. Therefore, 
  $$
\#\{T_{rgb}(EP-\{e\})\text{ of Type A}\}\ =\ \#\{T_{rgb}(EP-\{e\})\text{ of Type B}\}.
  $$
   \end{theorem}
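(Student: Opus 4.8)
The plan is to establish a bijection between Type A and Type B RGB-tilings of $Q := EP-\{e\}$ by exhibiting an explicit, invertible move built from the congruence operations (VCS/ECS) already developed in the paper. First I would fix an $e$-diamond with surrounding vertices $N,S,a,b$ and $e=ab$, and record the colour pattern on the diamond: in Type A all four surrounding edges are blue (so the north wedge $N$-$a$, $N$-$b$ and the south wedge $S$-$a$, $S$-$b$ all carry blue), while in Type B, say, the north wedge is green and the south wedge is blue (or some synonym). The key local observation is that inside the $e$-diamond region $\Sigma$ the two red triangles $aNS$-type pieces are replaced, upon filling the abandoned edge, by a single red edge; and the difference between Type A and Type B is localized — it is exactly whether the two edges $N$-$a$ and $N$-$b$ bounding the north wedge agree in colour with the south wedge or not. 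So the move I want is: take a Type A tiling, perform ECS along the red canal line $rCL(aN)$ (the one separating $rC(N)$ from $rC(a;b)$, in the notation of Remark~\ref{re:KnotAsSimpleAs}); this swaps green and blue on everything between the two banks of $rCL(aN)$, in particular it recolours the north wedge edges $N$-$a$, $N$-$b$ from blue to green while leaving the south wedge blue, producing a Type B pattern on the diamond. Because ECS produces a legitimate element of $\mathcal{RGBT}(Q)$ (Definition~\ref{def:Edge-CS}) and is by definition a congruence, the image is a Type B RGB-tiling congruent to the original.

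Next I would check that this move is genuinely a bijection. The operation ECS along a fixed canal line is its own inverse: applying ECS to $rCL(aN)$ twice restores the original tiling, since swapping green$\leftrightarrow$blue twice is the identity and $rCL(aN)$ is unchanged by the operation (ECS preserves $T_r$). One must verify that starting from an arbitrary Type B tiling, the red canal line $rCL(aN)$ is well-defined — i.e.\ that the red block graph $rBG(Q)$ really has an edge separating $rC(N)$ from $rC(a;b)$ — and that ECS along it lands back in Type A. This is where I would invoke Lemma~\ref{thm:BGisTree}: since $rBG(Q)$ is a tree, the canal line $rCL(aN)$ bisects $Q$, so "the side containing $N$" is unambiguous, and the green/blue swap on that side is exactly the inverse move. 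A small amount of care is needed because $N$ and $a$ might lie in the same red component (if $N$-$a$ is a $1$-$3$ edge rather than $2$-$4$); but the four surrounding edges of the diamond are the same colour (blue) in Type A, so the triangles $N a b$ and $S a b$ force the relevant red adjacencies, and the canal line structure around the diamond is forced as in Figure~\ref{fig:AtypeBtype00}. Having an involution that carries Type A bijectively onto Type B immediately gives the counting identity $\#\{\text{Type A}\} = \#\{\text{Type B}\}$, and the "congruent" clause is automatic since ECS is a congruence move.

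For the existence half of the statement — that both types are nonempty — I would argue that $\mathcal{RGBT}(Q)$ is nonempty by Theorem~\ref{RGB1-thm:eMPG4}(b), so at least one of Type A, Type B occurs (Types C, D being already excluded), and then the bijective move of the previous paragraph transports whichever type we have to the other, so both are nonempty. Alternatively, and perhaps more robustly, I would go back to the $4$-coloured $EP-\{v_0\}$ picture of Figures~\ref{fig:renew}--\ref{fig:renew2}: the leftmost graph of Figure~\ref{fig:renew} exhibits a Type A $v_0v_1$-diamond and the other two exhibit Type B $v_0v_3$- and $v_0v_4$-diamonds, all living on $EP$ with a degree-$5$ vertex; for a general $e=ab$ one uses that $Q=EP-\{e\}$ is $4$-colourable and that flipping the colour assigned to $a$ (the way $v_0{:}1$ vs.\ $v_0{:}2$ is flipped in those figures) toggles the local diamond pattern between the two types.

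The main obstacle I expect is not the counting — that is immediate once a bijection is in hand — but verifying that ECS along $rCL(aN)$ \emph{always} converts Type A precisely to Type B and never back to Type A, i.e.\ that the parity of the diamond pattern genuinely flips. Concretely I must rule out the degenerate possibility that $rCL(aN)$ and $rCL(aS)$ coincide or that the south wedge also lies on the $N$-side of $rCL(aN)$, which would make the swap recolour both wedges and return a Type A pattern. Controlling this requires a careful look at the red block graph near the diamond, using the tree property (Lemma~\ref{thm:BGisTree}) to separate the $N$-side from the $S$-side, together with the fact that in Type A the triangle $N$-$a$-$S$ has its two sides $N$-$a$ and $S$-$a$ both blue, hence both endpoints $N$ and $S$ are red-connected to distinct red components flanking $a$ — so the two canal lines are distinct and the wedges lie on opposite sides. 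Once that separation is nailed down, the involution and the equality of cardinalities follow cleanly.
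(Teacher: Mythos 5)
Your proposal is correct and follows essentially the same route as the paper: the paper's main proof performs ECS on the region bounded by $K_r|_a^b\cup e$ (equivalently, one side of the red canal line separating $rC(a;b)$ from $rC(N)$), and the remark immediately after the proof gives exactly your block-graph formulation, ECS on $rCL(aN)$ toggling Type A $\leftrightarrow$ Type B with $rCL(aS)$ giving the synonym back to Type A. Your extra care about the involution being well defined (that $rC(N)\neq rC(S)$, so the two wedges lie on opposite sides of the canal line) is a worthwhile tightening of the paper's counting claim but not a different method.
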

   \begin{proof}
To explain this, let us make the two RGB-tilings a little bit precise as following two graphs.   
   \begin{figure}[h] 
   \begin{center}
   \includegraphics[scale=1.2]{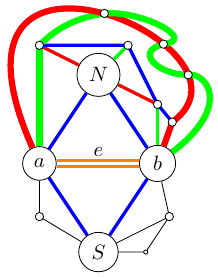} \quad \quad
   \includegraphics[scale=1.2]{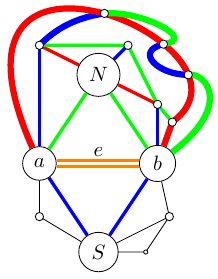}
   \end{center}
   \caption{Congruent partnership between Type A and Type B} \label{fig:AtypeBtype}
   \end{figure}
We start with Type A which is the left graph in Figure~\ref{fig:AtypeBtype}.  Without loss of generality, let us focus on the red Kempe chain $K_r$ from $a$ to $b$. The bounded region enclosed by $K_r\cup e$ is a good place to perform ECS between green and blue.   Then we obtain a Type B shown as the right graph. Notice that the original green Kempe chain from $a$ to $b$ of the left graph is now destroyed. 

The last paragraph is just one direction. To prove the other direction, we cannot use the right graph in Figure~\ref{fig:AtypeBtype} who has an additional green-blue path crossing red path $K_r|_a^b$ in a particular way. The initial Type B has no information about this green-blue path. The correct way is to use the right graph in Figure~\ref{fig:AtypeBtype00} which is 
the general Type B for $EP-\{e\}$ and it only has one Kempe chain $K_r|_a^b$. But the proving process is still reversed: We perform ECS in the a bounded region enclosed by $K_r\cup e$ for this right graph, and then the green color of the edges $aN$ and $bN$ turns blue, i.e., all four edges along the outer facet of $Q$ are of same color that is the main character (b1) of Type A given in Theorem~\ref{thm:EPediamond}. Thus the necessary conditions (b1), (b2) and (b3) shall come all together, because we assume $EP\in e\mathcal{MPGN}$.  
Now there must be two Kempe chains $K_r$ and $K_g$ as the character (b2) of Type A.       
   \end{proof}

   \begin{remark}
The existence of Type A $e$-diamond for every $e\in EP$ and the picture of Type A provide a new proof for Corollary~\ref{RGB1-thm:V5more2}(b).
   \end{remark}

   \begin{remark}
There is another way to prove this theorem by using the concept of block graphs. Given the right graph in Figure~\ref{fig:AtypeBtype}, we can build the red block graph from $EP-\{e\}$ as the first line in Figure~\ref{fig:treeofQ2}. Using what we just learned in Subsection~\ref{sec:LearnECS}, we have 
   \begin{eqnarray*}
T^a_{rgb}\quad \text{(Type A)}&  \overset{\text{\tiny ECS on $rCL(aN)$}}{\Large \Longleftrightarrow}& T^b_{rgb} \quad  \text{(Type B)}\\
& \overset{\text{\tiny ECS on $rCL(aS)$}}{\Large \Longleftrightarrow}& T^c_{rgb}\ \equiv\ T^a_{rgb} \quad  \text{(Type A)},  
   \end{eqnarray*}
where $T^c_{rgb}$ has the four surrounding edges of $e$ all green.
In the last line in Figure~\ref{fig:treeofQ2}, we skip $rBG(EP-\{e\};T^c_{rgb})$, but we show more details about exchange between $T^b_{rgb}$ and $T^c_{rgb}$.

   \begin{figure}[h]
   \begin{center}
   \includegraphics[scale=0.87]{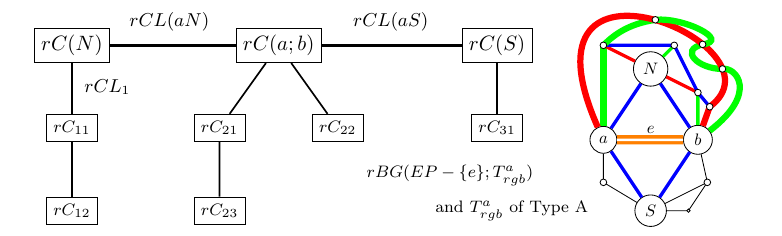}\\
   \includegraphics[scale=0.87]{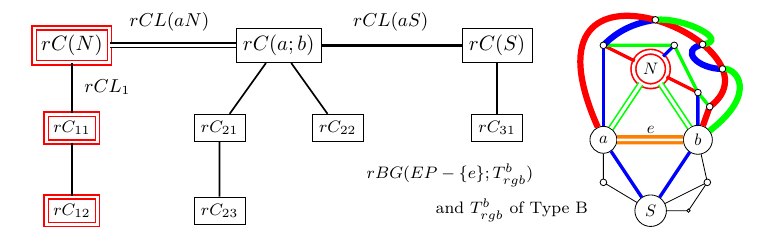}\\
   \includegraphics[scale=0.87]{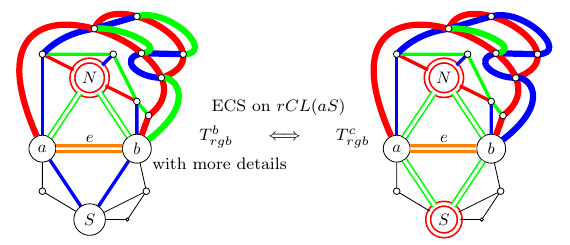}   
   \end{center}
   \caption{Top line: VCS on $rC(v_2)$; Bottom line: ECS on $rCL(v_1v_2)$} \label{fig:treeofTypeAB}
   \end{figure}

   \end{remark}

Theorem~\ref{thm:EPediamond} nearly offers sufficient conditions for $EP\in e\mathcal{MPGN}$. We will complete these if-and-only-if conditions as our final goal. 

Here is a direct consequence of Theorem~\ref{thm:EPediamond2}. 
   \begin{theorem}[Important]  \label{thm:EPNScolorable}
Let $(EP;e)$ and the $e$-diamond set generally as in Theorem~\ref{thm:EPediamond}.  
   \begin{enumerate}
\item[(a)] The vertices $N$ and $S$ are not adjacent in $EP$.   
\item[(b)] Not only $EP-\{e\}$ but also $EP-\{e\}\cup\{NS\}$ is 4-colorable.      
   \end{enumerate}
   \end{theorem}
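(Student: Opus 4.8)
The plan is to derive both statements from the Type~A picture guaranteed by Theorem~\ref{thm:EPediamond}(b) together with the congruence bridge of Theorem~\ref{thm:EPediamond2}. For part (a), I would argue by contradiction: suppose $NS\in E(EP)$. Then around the $e$-diamond the four vertices $a,b,N,S$ together with the edges $aN,Nb,bS,Sa,NS$ (and the abandoned $e=ab$) would force $\{a,b,N,S\}$ to span a $K_4$ in $EP$, so $e=ab$ would be a chord of the $4$-cycle $\Omega:=N$-$a$-$S$-$b$-$N$ lying on the \emph{other} side from the $e$-diamond. But in a Type~A RGB-tiling on $Q:=EP-\{e\}$ the four edges $aN,Nb,bS,Sa$ are all blue (character (b1)); adjoining the blue edge $NS$ would create a blue triangle $N$-$b$-$S$ (or $N$-$a$-$S$) whose two sides $Nb,bS$ are already blue --- forcing $NS$ to be non-blue --- while simultaneously $NS$ together with a path of the red Kempe chain $K_r|_a^b$ and the structure near the diamond produces a colour conflict. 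More cleanly: the edge $NS$ would have to receive one of the three colours in the RGB-tiling of $Q$, and whichever it is, it closes a monochromatic triangle with two of the four uniformly-blue surrounding edges unless it is blue, in which case $N$-$a$-$S$-$b$-$N$ plus $NS$ plus the forced blue tile structure contradicts planarity of the RGB-tiling (a blue canal line cannot cross itself). I expect this case analysis --- showing every colour choice for $NS$ is blocked --- to be the main obstacle, since one must be careful that $e$ is abandoned and so the tiling lives only on $Q$, not on $EP$.

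\textbf{Part (b).} Once (a) is established, $N$ and $S$ are non-adjacent, so $EP':=EP-\{e\}\cup\{NS\}$ is a legitimate simple planar graph; moreover it is again a triangulation of the same outer region, i.e.\ replacing the abandoned diagonal $ab$ of the quadrilateral $N$-$a$-$S$-$b$ by the other diagonal $NS$ keeps $EP'$ an MPG (the $e$-diamond, a $4$-cycle, is retriangulated by $NS$ instead of $ab$). The claim is that $EP'$ is $4$-colourable. I would prove this directly from a Type~A RGB-tiling $T_{rgb}$ on $Q=EP-\{e\}$: all four edges $aN,Nb,bS,Sa$ are blue, so along the $4$-cycle $\Omega$ the blue canal structure enters and leaves the diamond region in the pattern that \emph{forces} the chord $NS$ to be colourable green (or red) without creating a monochromatic odd cycle. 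Concretely, colour $NS$ green: by Theorem~\ref{RGB1-thm:4RGBtiling} applied to $Q$, a green path $P_g|_N^S$ in $Q$ together with the $2$-path $N$-$a$-$S$ (both edges blue, hence the cycle has one green, zero\,--\,actually two blue) has the parity property of Lemma~\ref{RGB1-thm:evenoddRGB}(b), forcing $P_g|_N^S$ to have length making $P_g|_N^S\cup\{NS\}$ an even cycle; so no green odd cycle appears. A cleaner route: $T_{rgb}$ restricted to $Q$ with $NS$ added green is an RGB-tiling of the MPG $EP'$ (the green edge $NS$ together with its two green half-tiles fills the retriangulated diamond, matching the blue/red tiles consistently), and by Theorem~\ref{RGB1-thm:4RGBtiling}(a) any RGB-tiling of an MPG yields a $4$-colouring.

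\textbf{Where the work concentrates.} The routine part is the bookkeeping: checking that inserting $NS$ with the right colour genuinely extends $T_{rgb}$ to an RGB-tiling of the new MPG $EP'$ (i.e.\ the tiles fit together across the diamond). The delicate part is part~(a): ruling out $N\sim S$. I would lean on Theorem~\ref{thm:EPediamond2} here --- since every Type~A $e$-diamond is congruent to a Type~B one and conversely, and in a Type~B picture the four surrounding edges of $e$ carry \emph{two} colours, an edge $NS$ present in $EP$ would impose incompatible constraints in the two congruent tilings, contradicting that congruence preserves $EP$ itself. If the congruence argument proves slippery, the fallback is the direct planarity/parity argument sketched above: a putative blue $NS$ forces a blue canal line in $Q$ to close up through the diamond in a way that either crosses $K_r$ or violates the non-crossing matching along $\Omega$ from Lemma~\ref{RGB1-thm:InOutCanal}, while a non-blue $NS$ closes a monochromatic triangle against two of the uniformly-coloured surrounding edges.
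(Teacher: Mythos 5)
Your part (a) is far more complicated than it needs to be, and as written it has a gap. The paper disposes of (a) in one line without any tiling: if $NS\in E(EP)$ then $N$-$a$-$S$-$N$ is a triangle of $EP$, and since $b$ (and more) lies on one side of it while $\Sigma'$ lies on the other, it is either all of $K_4$ or a non-trivial $3$-cycle; both are excluded by Lemma~\ref{RGB1-thm:nontrivial3}. Your colour case-analysis does not close: a ``monochromatic triangle'' $N$-$a$-$S$ is only a contradiction if that triangle is a facet, which is precisely what you would still have to rule out, and the claim that a blue $NS$ ``contradicts planarity of the RGB-tiling'' is asserted rather than proved. (If you insist on a colouring proof, the clean observation is that Type A forces the induced $4$-colouring to satisfy $f(N)=f(S)$ because all four edges $aN,Nb,bS,Sa$ are blue; hence $NS$ cannot be an edge. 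You never quite say this.)

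Part (b) contains two genuine errors. First, you work in Type A and colour $NS$ green, claiming the result ``is an RGB-tiling of the MPG $EP'$.'' It is not: in Type A the two new facets $N$-$a$-$S$ and $N$-$b$-$S$ each already carry \emph{two blue} edges, so no choice of colour for $NS$ makes them rainbow, and the RGB-tiling of $Q$ does not extend to $EP-\{e\}\cup\{NS\}$ at all; only a single-colour tiling extends. This is exactly why the paper instead takes a \emph{Type B} tiling, where $NS$ coloured red completes both new facets consistently. Second, your parity argument runs backwards: for a green path $P_g|_N^S$, the cycle $P_g\cup\{Na,aS\}$ has $0$ red and $2$ blue edges, so Lemma~\ref{RGB1-thm:evenoddRGB}(b) forces the green count, i.e.\ the length of $P_g$, to be \emph{even}, whence $P_g\cup\{NS\}$ would be an \emph{odd} green cycle --- the opposite of what you claim. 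The parity lemma cannot rescue the step; what is actually needed is that no monochromatic $N$-$S$ path of the relevant colour exists, because it would have to cross the Kempe chain of that colour joining $a$ to $b$. That separation argument is the whole content of the paper's proof: take Type B, colour $NS$ red, and note that the pre-existing $K_r|_a^b$ prevents any red cycle through $NS$, so the extended R-tiling on $EP-\{e\}\cup\{NS\}$ has no red odd-cycle and the graph is $4$-colourable.
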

   \begin{proof}
(a): If $N$ and $S$ are adjacent in $EP$, then either $EP=K_4$ or the triangle $N$-$a$-$S$-$N$ forms a non-trivial 3-cycle in $EP$. Both are impossible. Please, refer to Lemma~\ref{RGB1-thm:nontrivial3}. Thus, $N$ and $S$ are not adjacent in $EP$. 

(b): Just look at Type B. Let us assign the edge $NS$ red. Because the pre-existing red Kempe chain prevents a new red cycle passing through $NS$, this new R-tiling on $EP-\{e\}\cup\{NS\}$ has no red odd-cycle. Therefore, $EP-\{e\}\cup\{NS\}$ is 4-colorable.  
   \end{proof}  
   
It can be easily prove by induction that any MPG, say $G$, has $3|G|-6$ edges. Let $\omega:=|EP|$. The next corollary is just for fun.
   
   \begin{corollary}
Through the modification $EP-\{e\}\cup\{NS\}$, there are $3\omega-6$ MPG's which are 4-colorable and only different from $EP$ with only one single edge.      
   \end{corollary}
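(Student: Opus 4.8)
The plan is to show that the map $e \mapsto \bigl(EP - \{e\}\bigr) \cup \{NS\}$ produces, for each edge $e \in E(EP)$, a genuine MPG that is $4$-colorable and differs from $EP$ in exactly one edge, and then to count how many edges $e$ there are. First I would recall the standard fact, stated just before the corollary, that an MPG $G$ on $\omega$ vertices has $3\omega - 6$ edges; since $EP$ is an MPG with $|EP| = \omega$, we have $|E(EP)| = 3\omega - 6$. The strategy is therefore: establish that distinct edges $e$ give rise to distinct modified graphs (so the count is faithful), and invoke Theorem~\ref{thm:EPNScolorable} for the two properties ``$4$-colorable'' and ``MPG'' (the latter via the non-adjacency of $N$ and $S$).

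The key steps, in order, are as follows. \textbf{Step 1:} Fix $e = ab \in E(EP)$ and let $N, S$ be the two vertices such that $N$-$a$-$S$-$b$-$N$ bounds the $e$-diamond (the two triangular faces of $EP$ incident to $e$). Theorem~\ref{thm:EPNScolorable}(a) gives that $N$ and $S$ are not adjacent in $EP$, so $NS \notin E(EP)$ and the operation ``delete $e$, add $NS$'' genuinely changes the edge set by a single swap. \textbf{Step 2:} Verify that $\bigl(EP - \{e\}\bigr) \cup \{NS\}$ is again an MPG: deleting $e$ merges the two triangular faces $N$-$a$-$b$ and $S$-$a$-$b$ into the quadrilateral face $N$-$a$-$S$-$b$, and adding the diagonal $NS$ re-triangulates that quadrilateral, so every face is again a triangle and the graph remains planar on the same vertex set; hence it is an MPG on $\omega$ vertices with $3\omega - 6$ edges. \textbf{Step 3:} Theorem~\ref{thm:EPNScolorable}(b) states precisely that this modified graph is $4$-colorable. \textbf{Step 4:} Count. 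Each such modified MPG is determined by the edge $e$ we removed, since from the modified graph one can recover which edge was swapped in (the new edge $NS$ lies in exactly two triangles $N$-$a$-$S$ with a common vertex configuration, and $ab$ is the ``missing diagonal'' of the unique quadrilateral $N$-$a$-$S$-$b$); more simply, $EP \setminus \{e\}$ is recoverable as the modified graph minus $NS$, so distinct $e$ yield distinct results. Therefore the number of such MPGs equals $|E(EP)| = 3\omega - 6$.

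I expect the only genuine subtlety to be \textbf{Step 4}, the injectivity claim: one must rule out the possibility that two different edges $e \ne e'$ of $EP$ happen to produce the same MPG. The clean argument is that the resulting graph $H = (EP - \{e\}) \cup \{NS\}$ satisfies $H - \{NS\} = EP - \{e\}$, and since $EP$ is an MPG every edge-deleted subgraph $EP - \{e\}$ is a distinct graph (they have different edge sets), so $e$ is recoverable; but one should be slightly careful that $NS$ is unambiguously identifiable in $H$ as ``the edge whose removal yields a proper subgraph of $EP$,'' which holds because $NS \notin E(EP)$ while every other edge of $H$ lies in $E(EP)$. Everything else is routine: Steps 2 and 3 are immediate from the triangulation bookkeeping and Theorem~\ref{thm:EPNScolorable}. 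Given the corollary is flagged ``just for fun,'' I would keep the write-up to a few lines, citing Theorem~\ref{thm:EPNScolorable} and the edge-count formula and stating the injectivity observation briefly.
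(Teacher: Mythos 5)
Your proof is correct and follows the same route the paper intends: the corollary is stated without proof as an immediate consequence of Theorem~\ref{thm:EPNScolorable} together with the fact that an MPG on $\omega$ vertices has $3\omega-6$ edges. You additionally spell out the injectivity of $e \mapsto (EP-\{e\})\cup\{NS\}$ and the re-triangulation check, which the paper leaves implicit; both details are handled correctly.
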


\section{Necessary and sufficient conditions for $EP$}  \label{sec:NecSufConds}

In addition to our discussion in the last two sections, the Type A and Type B still have some other new characters for $EP$ to explore. We plan to write a much precise statement for these characters. The most important thing is to accomplish the title of this section. 

Given a $2n$-semi-MPG, $M$, an R-tiling (or G-/B-tiling) is \emph{perfect} if no edge along the outer facet $2n$-gon is red. We use the word \emph{perfect} because no red half-tile is used, i.e., the tiling is made all by red diamonds. Briefly we use ``R-tiling$^\ast$'' as the abbreviation of ``R-tiling without any red odd-cycle.''  Particularly we would like discuss a 4-semi-MPG, $Q$, with it outer facet $\Omega:=N$-$a$-$S$-$b$-$N$. Notice that most of time we set $Q:=EP-\{ab\}$ for any fixed $e:=ab\in E(EP)$, but now we assume $Q$ a general 4-semi-MPG with $|Q|\le \omega$.

Let us recall the notation $T_r(Q)$ and $T_{rgb}(Q)$ of an R-tiling and an RGB-tiling on $Q$, where $T_{rgb}(Q)$ means the coexistence of R-, G- and B-tilings. If we obtain an R-tiling$^\ast$ $T_r(Q)$ first, then we can extend it to a $T_{rgb}(Q)$. We can also use $T_{rg}(Q)$, $T_{rb}(Q)$ and $T_{gb}(Q)$; however they are no different from $T_{rgb}(Q)$, because once two tilings coexist a tiling of the third color is immediately ready. Because $Q$ is a $4$-semi-MPG and an R-tiling$^\ast$ $T_r(Q)$ on One Piece is always grand, a coexisting $T_{rgb}(Q)$ extended from $T_r(Q)$ must induce a 4-coloring function on $Q$. For the detail, refer to Theorem~\ref{RGB1-thm:4RGBtiling} and Theorem~\ref{RGB1-thm:4RGBtilingGeneralization}.

Let variables $\mathcal{X}$ and $y$ denote brief names of one edge-color from red, blue and blue, and most of time $\mathcal{X}$ and $y$ are a same color. We define the following collections of tilings on $Q$ (not on $EP$). These collections have a general notation $\mathcal{XT}_{ky}(Q)$ or simply $\mathcal{XT}_{ky}$ with $Q$ assigned already, where $k\in \{0,2,4\}$. Clearly, if $\{\mathcal{X}, y\} \overset{\text{\tiny syn}}{=}\{\mathcal{X'}, y'\}$ (either both one color or both two colors) then $\mathcal{XT}_{ky} \overset{\text{\tiny syn}}{=} \mathcal{X'T}_{ky'}$, where $\overset{\text{\tiny syn}}{=}$ is the equivalence relation of synonym. 
   \begin{eqnarray*}
\mathcal{RT}_{0r} & = & \{T_r(Q): \text{a perfect R-tiling$^\ast$, i.e., all edges of $\Omega$ is black}\};\\
\mathcal{GT}_{2g} & = & \{T_g(Q): \text{a G-tiling$^\ast$ s.t.\ $\Omega$ has two green and two black}\};\\ 
\mathcal{BT}_{4b} & = & \{T_b(Q): \text{a B-tiling$^\ast$ with all four edges along $\Omega$ blue}\}.
   \end{eqnarray*}
We can define the corresponding collections that are extended from the last three:
   \begin{eqnarray*}
\mathcal{RGBT}_{0r} & = & \{T_{rgb}(Q): \text{an RGB-tiling with no red along $\Omega$}\};\\
\mathcal{RGBT}_{2g} & = & \{T_{rgb}(Q): \text{an RGB-tiling s.t.\ $\Omega$ has two green}\};\\ 
\mathcal{RGBT}_{4b} & = & \{T_{rgb}(Q): \text{an RGB-tiling with all four edges along $\Omega$ blue}\}.
   \end{eqnarray*}
  
Also recall the definition of the north-$\wedge$ edges and the south-$\vee$ edges of $\Omega$.  Additionally we define the east-$<$ to be $\{aN,aS\}$, the west-$>$ to be $\{bN,bS\}$, the double-slash-$//$ to be $\{aN,bS\}$, the double-backslash-$\backslash\backslash$ to be $\{aS,bN\}$. According to these six different pair of edge sets, we can divide $\mathcal{GT}_{2g}$ into six sub-collections. In the following us just pick three of them to write definition precisely.    
   \begin{eqnarray*} 
\mathcal{GT}^\wedge_{2g} & = & \{T_g(Q)\in \mathcal{GT}_{2g}: \text{only the north-$\wedge$ edges are green}\};\\
\mathcal{GT}^{<}_{2g} & = & \{T_g(Q)\in \mathcal{GT}_{2g}: \text{only the east-$<$ edges are green}\};\\
\mathcal{GT}^{//}_{2g} & = & \{T_g(Q)\in \mathcal{GT}_{2g}: \text{only the double-slash-$//$ edges are green}\}
   \end{eqnarray*}

Clearly, $\mathcal{GT}_{2g}=\bigcup_{x\in D} \mathcal{GT}^{\ x}_{2g}$ where $D=\{\wedge, \vee, <, >, //, \backslash\backslash \}$. Let us use $\langle\cdot\rangle$ to denote the group of synonyms, for instance $\langle\mathcal{RT}_{0r}\rangle=\mathcal{RT}_{0r}\cup \mathcal{GT}_{0g}\cup \mathcal{BT}_{0b}$ and $\langle\mathcal{GT}_{2r}\rangle=
\mathcal{RT}_{2g}\cup 
\mathcal{RT}_{2b}\cup
\mathcal{GT}_{2r}\cup
\mathcal{GT}_{2b}\cup
\mathcal{BT}_{2r}\cup
\mathcal{BT}_{2g}$. According to the discussion in the last two sections, Type A associates with $\langle\mathcal{BT}_{4b}\rangle$, Type B associates with $\langle\mathcal{GT}^{\wedge}_{2g}\rangle=\langle\mathcal{GT}^{\vee}_{2g}\rangle$ and the union set of Type A and Type B associates with $\langle\mathcal{RT}_{0r}\rangle$, i.e., $\langle\mathcal{RT}_{0r}\rangle= \langle\mathcal{BT}_{4b}\rangle \cup \langle\mathcal{GT}^{\wedge}_{2g}\rangle$. The impossible Type C for $EP-\{ab\}$ is  $\langle\mathcal{GT}^{<}_{2g}\rangle=\langle\mathcal{GT}^{>}_{2g}\rangle$ and also impossible Type D is $\langle\mathcal{GT}^{//}_{2g}\rangle=\langle\mathcal{GT}^{\backslash\backslash}_{2g}\rangle$.
We also let $\mathcal{RGBT}^x_{2g}$ to be the extension of $\mathcal{GT}^x_{2g}$ for $x\in\{\wedge, \vee, <, >, //, \backslash\backslash \}$ 

%%%%
%%%%  Clearly $\mathcal{GT}^\wedge_{2g}\overset{\text{\tiny syn}}{=}\mathcal{GT}^\vee_{2g}$, $\mathcal{GT}^{<}_{2g} \overset{\text{\tiny syn}}{=} \mathcal{GT}^{>}_{2g}$ and $\mathcal{GT}^{//}_{2g} \overset{\text{\tiny syn}}{=} \mathcal{GT}^{\backslash\backslash}_{2g}$.
%%%%
%%%%
   
   \begin{figure}[h]
   \begin{center}
   \includegraphics[scale=1.2]{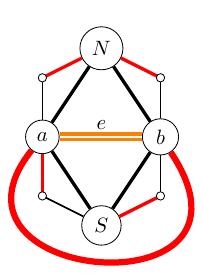}
   \includegraphics[scale=1.2]{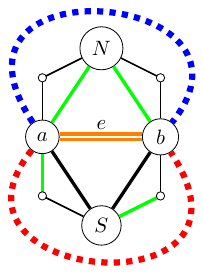}
   \includegraphics[scale=1.2]{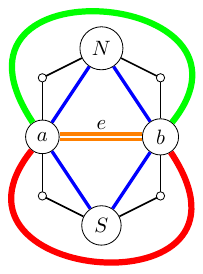}
   \end{center}
   \caption{Sample elements in $\mathcal{RT}_{0r}$, $\mathcal{GT}^\wedge_{2g}$ and $\mathcal{BT}_{4b}$ with $Q:=EP-\{e\}$} \label{fig:threeSetsTilings}
   \end{figure}

   \begin{theorem}[The Second Fundamental Theorem v1]  \label{thm:4ColorableIfandOnlyIf}
Let $M$ be an MPG and $e=ab\in E(M)$; also let $Q:=M-\{e\}$. The graph $M$ is 4-colorable if and only if  $\mathcal{GT}^{<}_{2g}\cup \mathcal{GT}^{//}_{2g}$ is non-empty. 
   \end{theorem}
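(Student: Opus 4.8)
The plan is to recast ``$M$ is 4-colorable'' in the language of R-/RGB-tilings and then translate the obstruction to having a $\mathcal{GT}^{<}_{2g}$ or $\mathcal{GT}^{//}_{2g}$ element on $Q=M-\{e\}$. First I would observe that, just as in the $EP$ analysis, any RGB-tiling on the 4-semi-MPG $Q$ restricts along $\Omega:=N\text{-}a\text{-}S\text{-}b\text{-}N$ to one of the five patterns dictated by Lemma~\ref{RGB1-thm:evenoddRGB}(b) (one color, or two colors in a pair); by synonym these organize into Types A, B, C, D, where Type C is exactly $\langle\mathcal{GT}^{<}_{2g}\rangle$ and Type D is $\langle\mathcal{GT}^{//}_{2g}\rangle$. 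So the statement to prove is: $M$ is 4-colorable $\iff$ some RGB-tiling of $Q$ is of Type C or Type D.

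For the ($\Leftarrow$) direction I would take a $T_{rgb}(Q)$ in $\mathcal{GT}^{<}_{2g}$ (the east-$<$ edges $aN,aS$ green, the rest of $\Omega$ black, say blue), and reinsert the edge $e=ab$. The key point: the two triangles $N\text{-}a\text{-}b$ and $S\text{-}a\text{-}b$ each already carry one green edge incident to $a$ and (working with, say, the green tiling $T_g$) the green half-tiles/diamonds can be completed by coloring $e$ \emph{non-green}. More cleanly: regard $T_b(Q)$, which is a B-tiling$^\ast$; since along $\Omega$ only the north and south edges $bN$ ... wait — rather, color $e$ with the color that keeps all three of $T_r,T_g,T_b$ odd-cycle-free on $M$. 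Concretely, assign $e$ the color (red) that is \emph{not} present on any of the four surrounding edges in a way that would close an odd cycle: in Type C the surrounding edges are $\{aN,aS\}$ green and $\{bN,bS\}$ blue, so coloring $e$ red makes $N\text{-}a\text{-}b$ and $S\text{-}a\text{-}b$ legal red half-tile / red-edge configurations, and no monochromatic odd cycle through $e$ can form because $e$'s endpoints $a,b$ were in distinct red components forced by the absence of red on $\Omega$ together with the structure; the analogous check works for Type D with $e$ colored to match its double-slash pattern. This yields an R-tiling$^\ast$ on the MPG $M$, hence by Theorem~\ref{RGB1-thm:4RGBtiling}(c) a proper 4-coloring of $M$.

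For the ($\Rightarrow$) direction, suppose $M$ is 4-colorable. Then $M$ has an RGB-tiling $T_{rgb}(M)$ with no monochromatic odd cycle in any color. Delete $e$: we get $T_{rgb}(Q)$, and the edge-colors around the now-empty $e$-diamond read off one of Types A--D on $\Omega$. If it is already Type C or D we are done. If it is Type A (all four surrounding edges blue) or Type B (north-$\wedge$ and south-$\vee$ one color each), I would use ECS to move it: by Theorem~\ref{thm:EPediamond2}'s mechanism (ECS in the region enclosed by $K_r\cup e$), one converts between the types — in particular, for a genuinely 4-colorable $M$ (where unlike $EP$ there need be \emph{no} red Kempe chain from $a$ to $b$), the red tiling $T_r(M-\{e\})$ extends to $M$, so $a$ and $b$ lie in the same red component of $Q$ only if forced, and when they do not, performing ECS along a canal line separating the two surrounding north/south blue edges pushes the pattern into $\mathcal{GT}^{<}_{2g}$ or $\mathcal{GT}^{//}_{2g}$. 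The cleanest route: since $M$ is 4-colorable, color $e$ red in $T_r(M)$; because this induces a proper coloring there is \emph{no} red odd cycle through $e$, so $a,b$ are in different red components of $Q$, which means $K_r|_a^b$ does \emph{not} exist — and then the surrounding-edge pattern of the $e$-diamond cannot be Type A or Type B (both of which, by Theorem~\ref{thm:EPediamond}(b2),(c2), \emph{require} a Kempe chain $K_r|_a^b$), so it must be Type C or Type D, i.e.\ $\mathcal{GT}^{<}_{2g}\cup\mathcal{GT}^{//}_{2g}\neq\emptyset$.

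\textbf{Main obstacle.} The delicate step is the bookkeeping in the ($\Leftarrow$) direction: verifying that reinserting $e$ with the appropriate color into a Type C or Type D RGB-tiling of $Q$ creates \emph{no} monochromatic odd cycle in \emph{any} of the three colors simultaneously (not just the color assigned to $e$), so that Theorem~\ref{RGB1-thm:4RGBtiling}(c) applies. For the color assigned to $e$ this is the ``Kempe chain blocks the odd cycle'' argument used already in Theorem~\ref{thm:EPNScolorable}(b); for the other two colors one must check that the two new triangles $N\text{-}a\text{-}b$, $S\text{-}a\text{-}b$ are filled by legal half-tiles/diamonds and introduce no new odd cycle — which is where the precise Type C vs.\ Type D edge patterns matter and where I would spend the most care, likely invoking Lemma~\ref{RGB1-thm:evenoddRGB}(b) on $Q$ to control parities of candidate cycles. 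A secondary subtlety is making sure the argument is symmetric in the roles of $<$ vs.\ $>$ and $//$ vs.\ $\backslash\backslash$ (handled by synonym) and that it does not secretly use extremality of $EP$ — here $M$ is an arbitrary MPG, so the Type C/D exclusion is \emph{not} available and the theorem is genuinely an iff rather than a one-directional necessary condition.
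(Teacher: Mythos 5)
Your overall plan (classify the boundary pattern of the $e$-diamond into Types A--D and show that 4-colorability of $M$ corresponds exactly to Types C/D) is the right one, but both directions as written contain invalid steps. In the ($\Rightarrow$) direction you rule out Types A and B by appealing to Theorem~\ref{thm:EPediamond}(b2),(c2), which assert that Type A/B tilings carry a Kempe chain $K_r|_a^b$; but those are necessary conditions proved \emph{for the non-4-colorable extremum $EP$} (the proof there replaces the yellow edge by red and uses non-4-colorability to force a red odd-cycle), so they say nothing about a general 4-colorable $M$. Moreover ``no red odd cycle through $e$'' does not give ``$a,b$ in different red components of $Q$'': a red even cycle through $e$, i.e.\ a red $a$--$b$ path of odd length in $Q$, is perfectly possible. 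The correct (and much shorter) argument, which is essentially the paper's entire proof, is to read the four surrounding edge colors directly off the 4-coloring: normalizing to Co[$a$:1, $b$:3, $N$:4, $S$:2 or 4], each of the triangles $abN$ and $abS$ receives one edge of each of the three edge-colors, so the north pair and the south pair are each one green/one blue; the resulting green pair is east/west or slash/backslash, i.e.\ an element of $\mathcal{GT}^{<}_{2g}\cup\mathcal{GT}^{//}_{2g}$ up to synonym. Types A and B are excluded not because of Kempe chains but because they would force $f(a)=f(b)$, contradicting $ab\in E(M)$.

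In the ($\Leftarrow$) direction the step you flag as the ``main obstacle'' is a genuine gap, and the partial justification you offer (``$a,b$ were in distinct red components forced by the absence of red on $\Omega$'') is false for the same reason as above. No odd-cycle bookkeeping is needed: a tiling in $\mathcal{GT}^{<}_{2g}\cup\mathcal{GT}^{//}_{2g}$ is a G-tiling$^\ast$ on the One Piece $Q$, hence grand, hence induces a 4-coloring $f$ of $Q$ (Theorem~\ref{RGB1-thm:4RGBtiling}); since $aN$ and $bN$ carry \emph{different} edge-colors in Types C and D, $f(a)\neq f(b)$, so $f$ is already a proper 4-coloring of $M$. (If you insist on the edge-theoretic route, the missing parity argument is the one the paper uses when excluding Types C/D for $EP$: any red $a$--$b$ path $P$ in $Q$ closes with $a$-$N$-$b$ to a cycle having exactly one green and one blue edge, so by Lemma~\ref{RGB1-thm:evenoddRGB}(b) the number of red edges $|P|$ is odd and $P\cup\{e\}$ is an even cycle --- but the vertex-coloring route makes this check, and the ``wait'' detour in your second paragraph, unnecessary.)
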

   \begin{proof}
Without loss of generality, this 4-coloring function is Co[$a$:1, $N$:4, $b$:3, $S$:2 or 4]. We have $\mathcal{GT}^{//}_{2g}$ non-empty if and only if Co[$S$:2]; also we have $\mathcal{GT}^{<}_{2g}$ non-empty if and only if Co[$S$:4]. The proof is complete.     
   \end{proof}
\noindent
This theorem is very simple. However, practically this condition appears too rare to be encountered and checked.  Still we I have great respect for this theorem as a background of the coming new properties.
   \begin{corollary}  \label{thm:4ColorableIfandOnlyIf2}
Let $M$ be an MPG with $|M|\le \omega$ and $e=ab\in E(M)$; also let $Q:=M-\{e\}$. The graph $M\in e\mathcal{MPGN}$ if and only if  $\mathcal{GT}^{<}_{2g}\cup \mathcal{GT}^{//}_{2g}$ is empty.   
   \end{corollary}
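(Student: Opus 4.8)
The plan is to obtain this corollary as an immediate consequence of Theorem~\ref{thm:4ColorableIfandOnlyIf}, the only extra ingredient being the minimality built into the class $e\mathcal{MPGN}$: once $|M|\le\omega$, the statement ``$M$ is non-4-colorable'' is equivalent to ``$M\in e\mathcal{MPGN}$.'' So the whole argument splits into the two implications of the biconditional, and each is short.

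For the forward direction, assume $M\in e\mathcal{MPGN}$. Then $M$ is in particular a non-4-colorable MPG, so Theorem~\ref{thm:4ColorableIfandOnlyIf} (applied to this very $e=ab$ and $Q=M-\{e\}$) forces $\mathcal{GT}^{<}_{2g}\cup\mathcal{GT}^{//}_{2g}$ to be empty; a non-empty one would make $M$ 4-colorable, a contradiction. Note that this half uses neither the size bound $|M|\le\omega$ nor anything beyond the previous theorem.

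For the reverse direction, assume $\mathcal{GT}^{<}_{2g}\cup\mathcal{GT}^{//}_{2g}=\emptyset$. By Theorem~\ref{thm:4ColorableIfandOnlyIf} again, $M$ is not 4-colorable. Since $EP$ is an extremum non-4-colorable MPG and $\omega=|EP|$ is therefore the least order of any non-4-colorable MPG, we get $|M|\ge\omega$; together with the standing hypothesis $|M|\le\omega$ this yields $|M|=\omega$, so $M$ is a non-4-colorable MPG of the extremal order, i.e.\ $M\in e\mathcal{MPGN}$. Here Theorem~\ref{RGB1-thm:eMPG4} is what pins down the precise meaning of ``extremum'' that makes this step rigorous and, incidentally, guarantees that $Q=M-\{e\}$ is 4-colorable, so the tiling collections in the statement live on a genuine 4-colorable 4-semi-MPG.

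Combining the two directions gives the asserted equivalence. I do not expect any genuine obstacle: the corollary is essentially a restatement of Theorem~\ref{thm:4ColorableIfandOnlyIf} under the size restriction, and the single delicate point is recognizing that, for $|M|\le\omega$, being a counterexample to the Four Color Theorem already forces membership in $e\mathcal{MPGN}$ — which is exactly where the extremality of $EP$ (hence the definition of $\omega$) enters.
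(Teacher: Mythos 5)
Your proposal is correct and follows essentially the same route as the paper: the corollary is read off from Theorem~\ref{thm:4ColorableIfandOnlyIf}, with the hypothesis $|M|\le\omega$ used exactly as you use it, namely to upgrade ``non-4-colorable'' to membership in $e\mathcal{MPGN}$ via the minimality of $\omega$ and to guarantee that $Q=M-\{e\}$ is 4-colorable so the tiling collections are not vacuously empty. The paper's own proof is only a one-sentence remark to this effect, so your write-up simply makes the same argument explicit.
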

   \begin{proof}
We need  $|M|\le \omega$ because we need $Q:=M-\{e\}$ 4-colorable and the domain $\mathcal{RGBT}(M)$ to check is always non-empty.
   \end{proof}
   
Let us temporary assume $Q:=EP-\{e\}$. Referring to the right graph in Figure~\ref{fig:threeSetsTilings}, we see all four edges along $\Omega$ blue. That means diamonds $aN$ and $bN$ are overlapping on $\triangle abN$, and so are diamonds $bS$ and $bS$ overlapping on $\triangle abS$.  If we extended this $T_b$ to a $T_{rgb}$, then we shall have Kempe chains $K_r|_a^b$ and $K_g|_a^b$ due to Theorem~\ref{thm:EPediamond}(b).       
Referring to the left graph in Figure~\ref{fig:threeSetsTilings}, we see all four edges along $\Omega$ black. If we extended this $T_r$ to a $T_{rgb}$, then there are two possible coloring along along $\Omega$: either Type A or Type B. By Theorem~\ref{thm:EPediamond}(b) and (c), a Kempe chain $K_r|_a^b$ is guaranteed.   
Finally let us refer to the middle graph in Figure~\ref{fig:threeSetsTilings} which is an element in $\mathcal{GT}^\wedge_{2g}$. If we extended this $T_g$ to a $T_{rgb}$, then there are two possible coloring on edges $aS$ and $bS$: either both red or both blue. Both red implies a Kempe chain $K_b|_a^b$, and both blue implies a Kempe chain $K_r|_a^b$.  

%%   \begin{remark}
%%So far, we have never proved the existence of R-tilings and RGB-tilings on a general MPG or on a general $n$-semi-MPG. Actually, a general MPG having an RGB-tiling is equivalent to a general MPG being 4-colorable. However, this claim is true for any MPG $M$ or $n$-semi-MPG $G$ with $|M|, |G|\le \omega$. The proof is as follows. First, we do a pre-work on $G$. Without any extra new vertex we can link some new edges (we have to avoid multi-edges) inside the outer facet of $G$ to make it a new MPG $M$. So we need only deal with an MPG $M$. For the assumption $|M| \le \omega$, $M$ is either 4-colorable or $M$ is an $EP\in e\mathcal{MPGN}4$ is non-4-colorable. A 4-colorable $M$ do have an RGB-tiling. If $M=EP$, by Theorem~\ref{thm:EPNScolorable}(b), $EP-\{e\}$ is 4-colorable so it has an RGB-tiling, and $EP-\{e\}\cup\{NS\}$ is also 4-colorable so it has an RGB-tiling and a Type B RGB-tiling on $EP-\{e\}$ must exist. Then a Type A RGB-tiling on $EP-\{e\}$ also exists by Theorem~\ref{thm:EPediamond}(e). 
%%   \end{remark}

   \begin{theorem}[The Second Fundamental Theorem v2:  the surrounding four edges of $e$-diamond; some necessary conditions]  \label{thm:R0G2B4}
Let $EP\in e\mathcal{MPGN}$ and any $e=ab\in E(EP)$, where $e$-diamond has its 4-cycle $\Omega:N$-$a$-$S$-$b$-$N$ around. Let $Q=E-\{ab\}$ which is a 4-semi-MPG with its outer facet $\Omega$. All the following statements are true:
   \begin{itemize}
\item[(a)] The sets $\mathcal{RT}_{0r}(Q)$, $\mathcal{GT}^\wedge_{2g}(Q)$ and $\mathcal{BT}_{4b}(Q)$  are all non-empty. Also we have $\langle\mathcal{GT}^\wedge_{2g}(Q)\rangle= \langle\mathcal{GT}_{2g}(Q)\rangle$, i.e., $\langle\mathcal{GT}^{<}_{2g}(Q)\rangle$ and $\langle\mathcal{GT}^{//}_{2g}(Q)\rangle$ are empty.
\item[(b)] For every $T_r\in \mathcal{RT}_{0r}(Q)$, there exist a red path of even length from $a$ to $b$.
\item[(c)] For every $T_g\in \mathcal{GT}^\wedge_{2g}(Q)$, there must exist an extension $T_{rgb}$ of $T_g$ with the south-$\vee$ edges blue (or red by symmetry) and then there must exist a red (blue) path of even length from $a$ to $b$.
\item[(d)] For every  $T_b\in \mathcal{BT}_{4b}(Q)$, there must exist an extension $T_{rgb}$ of $T_b$, and then there must exist a red path and a green path from $a$ to $b$ (that are both even length). 
   \end{itemize}
   \end{theorem}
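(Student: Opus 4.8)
The plan is to derive all four parts from the Type A / Type B dichotomy, using two tools repeatedly: (i) Theorems~\ref{thm:EPediamond} and~\ref{thm:EPediamond2}, which say that every RGB-tiling of $Q:=EP-\{e\}$ is of Type A or Type B, that both types occur, and that Type A carries dual Kempe chains $(K_r|_a^b,K_g|_a^b)$ of even length while Type B carries a single Kempe chain of even length in the color missing from the four $\Omega$-edges; and (ii) the extension principle for One Piece (Theorems~\ref{RGB1-thm:4RGBtiling} and~\ref{RGB1-thm:4RGBtilingGeneralization}): since $Q$ is a $4$-semi-MPG, hence One Piece, any single-color tiling$^\ast$ on $Q$ extends to an RGB-tiling on $Q$, and a \emph{perfect} R-tiling$^\ast$ on $Q$ further extends to an R-tiling of the MPG $EP$ by re-coloring $e$ red (legitimate because the four edges $aN,bN,aS,bS$ are all black, so $\triangle abN$ and $\triangle abS$ glue into a single red diamond).

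For part (a), I would first apply Theorem~\ref{thm:EPediamond2} to get a Type A and a Type B RGB-tiling of $Q$. The blue component of the Type A tiling is a B-tiling$^\ast$ with all four $\Omega$-edges blue, hence lies in $\mathcal{BT}_{4b}(Q)$; its red component has no red edge along $\Omega$, hence is a perfect R-tiling$^\ast$ in $\mathcal{RT}_{0r}(Q)$; and, after a permutation of colors making the Type B tiling green on its north-$\wedge$, its green component lies in $\mathcal{GT}^\wedge_{2g}(Q)$. This gives the three non-emptiness claims. For the rest of (a), $\mathcal{GT}^{<}_{2g}(Q)$ and $\mathcal{GT}^{//}_{2g}(Q)$ are, up to synonym, the excluded Types C and D, so $EP\in e\mathcal{MPGN}$ forces them empty by Corollary~\ref{thm:4ColorableIfandOnlyIf2}; since along the $4$-gon $\Omega$ a G-tiling$^\ast$ must display two green edges in exactly one of the six pair-patterns of $D$, only $\wedge$ (equivalently $\vee$) survives, giving $\langle\mathcal{GT}^\wedge_{2g}(Q)\rangle=\langle\mathcal{GT}_{2g}(Q)\rangle$.

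For part (b), given $T_r\in\mathcal{RT}_{0r}(Q)$ I would color $e$ red to obtain an R-tiling of $EP$; since $EP$ is not $4$-colorable, Theorem~\ref{RGB1-thm:4RGBtiling}(c) forces a red odd-cycle $C$, and because the R-tiling $T_r$ of $Q$ has no red odd-cycle, $C$ must traverse $e$, so $C-\{e\}$ is a red path from $a$ to $b$ of even length $\abs{C}-1$. For part (d), I would extend $T_b\in\mathcal{BT}_{4b}(Q)$ to an RGB-tiling; the four $\Omega$-edges stay blue, so it is Type A, and Theorem~\ref{thm:EPediamond}(b) supplies dual Kempe chains $(K_r|_a^b,K_g|_a^b)$ of even length, hence a red path and a green path from $a$ to $b$, each of even length. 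For part (c), I would extend $T_g\in\mathcal{GT}^\wedge_{2g}(Q)$ to an RGB-tiling; since the green count along the $4$-gon $\Omega$ equals $2$, Lemma~\ref{RGB1-thm:evenoddRGB}(b) forces the two south-$\vee$ edges to share one color, both blue or both red. In the ``both blue'' case the $\Omega$-edges carry green on the north-$\wedge$ and blue on the south-$\vee$, so the tiling is Type B and Theorem~\ref{thm:EPediamond}(c) yields a red Kempe chain $K_r|_a^b$ of even length, hence a red path from $a$ to $b$ of even length; the ``both red'' case is its image under the red$\leftrightarrow$blue synonym and yields a blue path.

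The genuinely new content is small, so I expect the main obstacle to be organizational rather than deep: keeping straight the dictionary between the combinatorial set-names $\mathcal{RT}_{0r},\mathcal{GT}^\wedge_{2g},\mathcal{BT}_{4b},\mathcal{GT}^{<}_{2g},\mathcal{GT}^{//}_{2g}$ and the geometric Types A/B/C/D while simultaneously tracking the R/G/B synonyms and the $N\leftrightarrow S$ reflection, and being careful that the extension statements of Theorems~\ref{RGB1-thm:4RGBtiling} and~\ref{RGB1-thm:4RGBtilingGeneralization} are invoked in the exact form valid for a $4$-semi-MPG that is One Piece. A secondary point to nail down is that re-coloring $e$ in part (b) really does produce an R-tiling of $EP$, i.e.\ that the two triangles $\triangle abN,\triangle abS$ form a red diamond and nothing else in the tiling is disturbed.
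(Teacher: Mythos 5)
Your proposal is correct and follows essentially the same route as the paper: all four parts are reduced to the Type A/Type B classification of $e$-diamonds (Theorems~\ref{thm:EPediamond} and~\ref{thm:EPediamond2}), single-color tilings are extended to RGB-tilings via the One Piece machinery, and part (b) is handled by coloring $e$ red and forcing a red odd-cycle through $e$. Your treatment is somewhat more explicit than the paper's (notably in constructing the even red path for (b) and in invoking Lemma~\ref{RGB1-thm:evenoddRGB}(b) to pin down the south-$\vee$ pair in (c)), but the logical skeleton is identical.
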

%%   \begin{remark}
%%First of all, parts (b), (c), (d) form if-and-only-if conditions for $EP$ being extremum, and we will describe the conditions precisely later. Notice that to finish a red tiling and a blue tiling coexisting with $T_g$ are not two independent process. What we need to do is that for every green canal line just assign any black edge red or blue than the rest of this green canal line is done, because red and blue edges occur alternately along every green canal line. So every canal line has two ways to color red and blue. The total possible RGB-tilings coexisting with $T_g$ is $2^{\#cl}$ where $\#cl$ is the number of green canal lines or rings along the river banks made by the G-tiling $T_g$.
%%   \end{remark}
   \begin{proof}
The background of these four claims are Theorem~\ref{thm:EPediamond}.    
   
(a): Notice that $\langle\mathcal{BT}_{4b}\rangle=\mathcal{RT}_{4r}\cup \mathcal{GT}_{4g}\cup \mathcal{BT}_{4b}$. By Theorem~\ref{thm:EPediamond}(a), $\langle\mathcal{BT}_{4b}\rangle$ is non-empty, so is $\mathcal{BT}_{4b}$.  The same argument works for $\mathcal{RT}^\wedge_{2r}$, $\mathcal{GT}^\wedge_{2g}$, $\mathcal{BT}^\wedge_{2b}$, $\mathcal{RT}^\vee_{2r}$, $\mathcal{GT}^\vee_{2g}$, and $\mathcal{BT}^\vee_{2b}$, because all together of them form $\langle\mathcal{GT}^\wedge_{2g}\rangle$  which consists of all RGB-tilings of Type B w.r.t.\ $(EP; e)$. 

Clearly tilings in $\mathcal{GT}^{<}_{2g}$ and $\mathcal{GT}^{>}_{2g}$ are Type C
and tilings in $\mathcal{GT}^{//}_{2g}$ and $\mathcal{GT}^{\backslash\backslash}_{2g}$ are Type D. All these four sets as well as Types C, D are empty sets. Therefore, $\langle\mathcal{GT}^{<}_{2g}(Q)\rangle$ and $\langle\mathcal{GT}^{//}_{2g}(Q)\rangle$ are empty.    

(b), (c), (d): By Lemma~\ref{RGB1-thm:RtilingOnePiece}, the tilings $T_r$ in (b), $T_g$ in (c) and $T_b$ in (c) must be grand. Additionally, no odd-cycle make these three single color tilings inducing a 4-colorable function by Lemma~\ref{RGB1-thm:grandRtilingNoOddCycle}. Thus, $T_r$ in (b), $T_g$ in (c) and $T_b$ can extend to their own RGB-tilings.

At this moment, to ensure $EP$ is extremum we need a proper Kempe chain $K_\ast$ such that $K_\ast \cup \{e\}$ is a non-trivial odd-cycle. Part (b) only need a red $K_r$, so it is not necessary to extend a coexisting RGB-tiling. Part (c) really need a coexisting RGB-tiling $T_{rgb}$ and by symmetry we assume the south-$\vee$ edges blue; so that we must have $K_r|_a^b$ of even length and then $K_r|_a^b\cup \{e\}$ is a red odd-cycle. Part (d) also need a coexisting RGB-tiling, and then the dual Kempe chains $(K_r, K_g)$ exist.
   \end{proof}

   \begin{remark}
(1) What is difference between Theorem~\ref{thm:R0G2B4} and Theorem~\ref{thm:EPediamond}? Both of them provide necessary conditions for $EP\in e\mathcal{MPGN}$, but Theorem~\ref{thm:R0G2B4} starts with an R, G or B-tiling rather than an RGB-tiling on $E-\{e\}$. Also Theorem~\ref{thm:R0G2B4}(b), which combines Types A and B, provides a new situation. (2) In order to claim sufficient conditions for $EP\in e\mathcal{MPGN}$, we show the following lemma first. This lemma also drops a hint: $\mathcal{BT}_{4b}(Q)$ might be empty for a general 4-semi-MPG $Q$ with $|Q|\le \omega$. We shall to careful to check something on an empty set, because nothing to check means everything is true. Therefore, we leave this particular $\mathcal{BT}_{4b}$ to the next section to discuss. (3) Our next goal is: Given an MPG $M$ with $|M|\le \omega$, how to recognize $M$ being non-4-colorable by offering only  $\mathcal{XT}_{ky}(M-\{e\})$?
   \end{remark}

For item (d), why we put "both even length" in parentheses? The next lemma is an answer.  
   \begin{lemma}  \label{thm:twoChainsEven}
In One Piece with $T_{rgb}$ provided, the existence of chains $(K_r|_\alpha^\beta,K_g|_\alpha^\beta)$ implies both chains are even length.  
   \end{lemma}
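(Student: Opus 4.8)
The plan is to reason about parity along a suitable even cycle through the diamond. Suppose $K_r|_\alpha^\beta$ and $K_g|_\alpha^\beta$ coexist inside One Piece with RGB-tiling $T_{rgb}$. Since both chains join the same pair of vertices $\alpha$ and $\beta$, their union $K_r|_\alpha^\beta \cup K_g|_\alpha^\beta$ (choosing one path from each cluster) is a closed walk, hence contains a cycle $C$ in $Q$. The key observation is that $C$ uses only red and green edges, so it uses no blue edge. By the parity constraint coming from $T_{rgb}$ being a coexisting triple on One Piece — this is exactly the content of Lemma~\ref{RGB1-thm:evenoddRGB}(b) read along the interior cycle $C$ — the numbers of red, green and blue edges along $C$ must all have the same parity (all even since $C$ is an even cycle in a graph with an RGB-tiling, or because the number of blue edges on $C$ is zero, forcing that common parity to be even). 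Consequently the number of red edges on $C$ is even and the number of green edges on $C$ is even.

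First I would make precise which path represents each chain: by Remark~\ref{re:KnotAsSimpleAs} a Kempe chain is really a red-connected (resp.\ green-connected) component containing both $\alpha$ and $\beta$, so I pick the rightmost red $\alpha$--$\beta$ path $P_r$ and the leftmost green $\alpha$--$\beta$ path $P_g$; any internal red canal rings or green canal rings are irrelevant to the parity of the endpoints' separation. Next I would form $C := P_r \triangle P_g$ (symmetric difference of edge sets), which is an even-degree subgraph and thus an edge-disjoint union of cycles, each avoiding blue. Then I apply Lemma~\ref{RGB1-thm:evenoddRGB}(b) to each such cycle: since it contains zero blue edges and the three color-counts share a common parity, that parity is even, so each cycle contributes an even number of red edges and an even number of green edges. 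Summing over the cycles of $C$, the total red count on $C$ is even and the total green count on $C$ is even.

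It remains to transfer ``even on $C$'' to ``$P_r$ and $P_g$ are even length''. Here I would use that $P_r$ and $P_g$ share their two endpoints, so $\ell(P_r) + \ell(P_g) = |E(C)| + 2\,|E(P_r)\cap E(P_g)|$, whence $\ell(P_r)+\ell(P_g) \equiv |E(C)| \pmod 2$, and since $|E(C)|$ is even we get $\ell(P_r) \equiv \ell(P_g) \pmod 2$. To pin down that this common parity is \emph{even} rather than odd, I would invoke the local structure at the endpoints: in the $e$-diamond setting $\alpha = a$, $\beta = b$, and the edges of $\Omega$ incident to $a$ and to $b$ are prescribed by Type A / Type B (Theorem~\ref{thm:EPediamond}), so $P_r$ and $P_g$ leave $a$ and arrive at $b$ through diamonds whose half-tile colors are fixed; running along the red canal line $rCL$ that borders $P_r$, each diamond traversed flips the side consistently, and the boundary conditions at $a$ and $b$ force an even number of steps. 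Equivalently, and more cleanly, I can close $P_r$ with the abandoned edge $e = ab$ (or with the $2$-path $a$--$N$--$b$ as in the proof preceding Theorem~\ref{thm:EPediamond}): $P_r \cup \{e\}$ is an odd cycle by Theorem~\ref{thm:EPediamond}(b3)/(c3), hence $\ell(P_r)$ is even, and then $\ell(P_g)$ is even by the parity identity above.

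The main obstacle I anticipate is the clean statement of Lemma~\ref{RGB1-thm:evenoddRGB}(b) for an \emph{interior} cycle of One Piece rather than for the outer facet: the excerpt phrases it for $n_i$-gon outer facets, so I need the interior version — ``along any cycle of an RGB-tiled graph the three color-counts have equal parity'' — which should follow by induction on the number of faces enclosed by the cycle, peeling off one tile (diamond or half-tile) at a time, since each tile changes each color-count along the boundary by an even amount or by a matched $\pm 1$ pattern. If that interior version is not directly available, the fallback is to route the argument entirely through the abandoned-edge trick of Theorem~\ref{thm:EPediamond}: treat $T_{rgb}$ restricted to red as an R-tiling, add $e$ back, apply Theorem~\ref{RGB1-thm:4RGBtiling}(c) to conclude $P_r\cup\{e\}$ is an odd cycle (so $\ell(P_r)$ even), repeat with green for $\ell(P_g)$, which avoids needing the interior-cycle parity lemma at all but does require that the ambient graph here genuinely plays the role of $EP$ with $e$ its $e$-diamond edge — a hypothesis I would state explicitly if the lemma is only meant for that context, or weaken to ``One Piece $M$ with a designated diamond edge'' otherwise.
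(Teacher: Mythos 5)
Your argument is correct in substance but follows a genuinely different route from the paper. The paper's proof is a two-line appeal to grandness: One Piece forces the R-tiling to be grand, so the vertex set splits into $V_{13}$ and $V_{24}$; a red path keeps $\alpha,\beta$ in the same class, every green edge crosses between the classes, so $K_g|_\alpha^\beta$ must have even length (this crossing-parity step is Lemma~\ref{RGB1-thm:RandomWalk}), and symmetrically for $K_r$. You instead take the edge-disjoint union $P_r\cup P_g$ (note $E(P_r)\cap E(P_g)=\emptyset$ automatically, since no edge is both red and green), decompose this even-degree subgraph into cycles, and apply the three-colour parity lemma to each cycle with blue count zero. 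Both arguments rest on the same foundation (the tiling on One Piece induces a consistent $\mathbb{Z}_2\times\mathbb{Z}_2$ labelling of vertices); yours packages it as edge-count parity along interior cycles, the paper's as a vertex-class random walk, and the paper's version avoids having to justify the interior-cycle form of Lemma~\ref{RGB1-thm:evenoddRGB}(b) — though the paper does itself use that form when excluding Types C and D, so your reliance on it is defensible.

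One correction: your third paragraph is unnecessary, and following it would actually weaken the result. Because the red path and the green path share no edges, the red count on your cycle system \emph{is} $\ell(P_r)$ and the green count \emph{is} $\ell(P_g)$; your second paragraph therefore already concludes that both are even, and there is nothing left to ``transfer.'' The detour through Theorem~\ref{thm:EPediamond}(b3)/(c3) and the abandoned edge $e=ab$ would import hypotheses the lemma does not have — an $e$-diamond, $\alpha=a$, $\beta=b$, and $EP$ extremum non-4-colorable — whereas the statement is for arbitrary $\alpha,\beta$ in an arbitrary One Piece with a $T_{rgb}$. Drop that paragraph and the proof stands as a valid alternative to the paper's.
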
 
   \begin{proof}
We have One Piece, so R-tiling is grand. In a grand R-tiling, a red path $K_r|_\alpha^\beta$ guarantees that either $\alpha, \beta \in V_{13}$ or $\alpha, \beta \in V_{24}$. Then, refer to Lemma~\ref{RGB1-thm:RandomWalk} to show $K_g|_\alpha^\beta$ is even length. Similarly we can show $K_r|_\alpha^\beta$ of even length. The given R-/G-tiling is grand is the key point of this lemma and the $n$-gon outer facet has minor impact.   
   \end{proof}

   \begin{lemma}   \label{thm:R0exist}
Let $Q$ be a general 4-semi-MPG with $|Q|\le \omega$ and its outer facet $\Omega:= N\text{-}a\text{-}S\text{-}b\text{-}N$. The sets $\mathcal{RT}_{0r}(Q)$ and $\mathcal{GT}_{2g}(Q)$ are both non-empty.
   \end{lemma}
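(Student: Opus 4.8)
The plan is to produce, for an arbitrary $4$-semi-MPG $Q$ with $|Q|\le\omega$ and outer facet $\Omega:=N\text{-}a\text{-}S\text{-}b\text{-}N$, a concrete RGB-tiling whose restriction to $\Omega$ lands in the required pattern, and then read off membership in $\mathcal{RT}_{0r}(Q)$ and $\mathcal{GT}_{2g}(Q)$. The crucial structural input is that $Q$, being a $4$-semi-MPG, is $4$-colorable (it has at most $\omega$ vertices, so it cannot be a non-$4$-colorable extremum; and anyway a $4$-semi-MPG with a $4$-cycle outer facet is $4$-colorable by the general theory of $n$-semi-MPG's cited in the excerpt), hence $\mathcal{RGBT}(Q)\ne\emptyset$. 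So the real content is purely about the behaviour along $\Omega$.

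First I would pick any $4$-coloring $f$ of $Q$ and look at the colors $f(N),f(a),f(S),f(b)$. Since $N\text{-}a$, $a\text{-}S$, $S\text{-}b$, $b\text{-}N$ are edges, consecutive vertices get distinct colors, so $\{f(a),f(S)\}$ and $\{f(N),f(b)\}$ are each $2$-element sets and the $4$-cycle is properly $2$-or-$3$-or-$4$-colored. Translating $f$ into its induced edge-coloring $T_{rgb}$ (color of an edge determined by the pair of colors on its endpoints, as in Part~I), the pattern of edge-colors along $\Omega$ is controlled by how many colors $f$ uses on the four vertices; by Lemma~\ref{RGB1-thm:evenoddRGB}(b) applied to the $4$-gon outer facet, along $\Omega$ each color appears an even number of times, so the possibilities are: all four edges one color (the Type~A/$\mathcal{BT}_{4b}$ pattern, up to synonym), or two edges of one color and two of another arranged as $\wedge/\vee$, or as $<$/$>$, or as $//\,$/$\,\backslash\backslash$. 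To land in $\mathcal{RT}_{0r}(Q)$ I need a coloring of $Q$ that forces \emph{no} edge of $\Omega$ to be red, i.e. red is the one color \emph{absent} from $\{f(N),f(a),f(S),f(b)\}$ pairings — equivalently, identifying ``red'' with the color class $\{1,3\}\leftrightarrow\{2,4\}$ in the standard RGB/vertex-color dictionary, I want a $4$-coloring of $Q$ in which no $\Omega$-edge joins a $\{1,3\}$-vertex to a $\{1,3\}$-vertex and no $\Omega$-edge joins a $\{2,4\}$-vertex to a $\{2,4\}$-vertex; because I am free to permute the four vertex-colors, I can always relabel a given $f$ so that the (nonempty) absent edge-color becomes red. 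Concretely: take any $4$-coloring $f$ of $Q$; among the three edge-colors at least one does not appear on all of $\Omega$'s edges in a way that blocks it — more carefully, since $\Omega$ is a $4$-cycle it uses at most two edge-colors, hence at least one edge-color is entirely absent from $\Omega$; apply a synonym permutation sending that absent color to red; this gives an element of $\mathcal{RT}_{0r}(Q)$, so $\mathcal{RT}_{0r}(Q)\ne\emptyset$.

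For $\mathcal{GT}_{2g}(Q)\ne\emptyset$ I would argue that among the three synonyms of whatever tiling I just built, or of a slightly modified one, one has exactly two green edges on $\Omega$. The $\Omega$-edge pattern is one of: (i) all four the same color, or (ii) two-and-two. In case (ii), up to synonym two of the edges are green and we are already in $\langle\mathcal{GT}_{2g}\rangle$, hence (applying the synonym that normalizes it) $\mathcal{GT}_{2g}(Q)\ne\emptyset$. In case (i), say all four $\Omega$-edges are blue; then I perform an ECS (edge-color-switching, Definition~\ref{def:Edge-CS}) along a suitable blue canal line — specifically the blue canal line/ring separating the ``north'' pair of $\Omega$-edges from the ``south'' pair — which exchanges red and green in a bounded region and turns exactly the two north $\Omega$-edges into green (this is precisely the Type~A $\to$ Type~B congruence move used in the proof of Theorem~\ref{thm:EPediamond2}, transplanted to a general $Q$; the only thing needed is that the relevant canal line is well-defined, which follows from $Q$ being planar and the block-graph being a tree by Lemma~\ref{thm:BGisTree}). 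The result has two green $\Omega$-edges, so $\mathcal{GT}_{2g}(Q)\ne\emptyset$.

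The main obstacle I anticipate is the case analysis on the $\Omega$-edge pattern being cleaner in spirit than in writing: one must be careful that ``$\Omega$ is a $4$-cycle, so it uses at most two edge-colors'' is used correctly (a properly-vertex-colored $4$-cycle can indeed force its induced edge-coloring to use only one or two colors, but spelling out the dictionary between the two $K_4$-color-pairs ``$\{1,2\}$-type'' edges and the RGB labels needs the conventions from Part~I), and that the ECS in case (i) really does exist for a \emph{general} $Q$ and does not accidentally create an odd cycle — here I would lean on Lemma~\ref{thm:BGisTree} (the block graph is a tree, so every canal line bisects $Q$ and ECS is legal) and on the fact that ECS never changes the fixed single-color tiling it is performed relative to, so grandness/absence of odd cycles is preserved. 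A secondary subtlety is that the statement is about $\mathcal{RT}_{0r}(Q)$ (single-color R-tilings$^\ast$, i.e. without odd cycles) and $\mathcal{GT}_{2g}(Q)$ (single-color G-tilings$^\ast$), not about RGB-tilings; but this is automatic, since any RGB-tiling restricts to an R-tiling$^\ast$ and to a G-tiling$^\ast$ with the same $\Omega$-pattern, so exhibiting the RGB-tilings above suffices.
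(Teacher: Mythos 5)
Your argument for $\mathcal{RT}_{0r}(Q)\ne\emptyset$ is correct and is genuinely different from (and more direct than) the paper's. The paper splits into three cases according to whether $Q\cup\{ab\}$ or $Q\cup\{NS\}$ lies in $e\mathcal{MPGN}4$, citing Theorem~\ref{thm:R0G2B4}(a) in the extremum cases and, in the remaining case, building a $4$-coloring of the MPG $Q\cup\{NS\}$ in which $NS$ is red, so that $\Omega\cup\{NS\}$ becomes a red diamond. You instead observe that any $4$-coloring of $Q$ (which exists by the same $|Q|\le\omega$ reasoning the paper uses) induces an RGB-tiling whose $\Omega$-pattern is $(4,0,0)$ or $(2,2,0)$ by the parity lemma on the $4$-gon outer facet, so some edge-color is entirely absent from $\Omega$ and a synonym permutation makes that color red. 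That part is clean and complete, and the same parity observation also disposes of $\mathcal{GT}_{2g}$ in your case~(ii).

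The gap is in your case~(i) for $\mathcal{GT}_{2g}(Q)$, where all four $\Omega$-edges carry one color, say blue. You propose an ECS ``along the blue canal line separating the north pair from the south pair,'' which ``exchanges red and green.'' By Definition~\ref{def:Edge-CS}, ECS on a blue canal line exchanges red and green and leaves every blue edge blue; it therefore cannot turn any $\Omega$-edge green, so the claimed conclusion does not follow. The move you actually need is an ECS on a \emph{red} canal line (exchanging green and blue), namely one of the red canal lines of the perfect R-tiling that starts and ends at the outer facet and crosses exactly two of the four black $\Omega$-edges; that such a canal line exists, and that the pairs of $\Omega$-edges so crossed form the non-crossing matching induced by the red canal system on the outer facet (Lemma~\ref{RGB1-thm:InOutCanal}), is exactly the point that must be supplied --- Lemma~\ref{thm:BGisTree} alone does not give it. This is also why the Type~A~$\to$~Type~B move of Theorem~\ref{thm:EPediamond2} does not transplant for free to a general $Q$: there the green--blue exchange is performed in the region bounded by $K_r\cup\{e\}$, and the red Kempe chain $K_r|_a^b$ is guaranteed only because the ambient graph is an extremum. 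The paper sidesteps the all-blue case entirely: in its Case~III the coloring with $f(N)=1$ and $f(S)=3$ forces each of $a$ and $b$ to contribute exactly one green and one blue edge to $\Omega$, so the $(2,2,0)$ pattern is automatic, and in Cases~I and~II it falls back on Theorem~\ref{thm:R0G2B4}(a). Either repair --- invoking the non-crossing matching of red canal lines, or reproducing the paper's case split --- closes your gap.
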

   \begin{proof} 
Let us consider three cases: I. $Q\cup\{ab\} \in e\mathcal{MPGN}4$; II. $Q\cup\{NS\} \in e\mathcal{MPGN}4$; III. neither I nor II. 

[I and II]:  See Theorem~\ref{thm:R0G2B4}(a).
 
[III]: Along the outer facet $\Omega$ (a 4-cycle), at least a pair of vertices on the opposite are non-incident in $Q$. Without loss of generality, we say $N$ and $S$ non-adjacent, then $Q':= Q\cup\{NS\}$ is an MPG and $Q\cup\{NS\} \notin e\mathcal{MPGN}4$ by the hypothesis. Since $|Q'|=|Q|\le \omega$, we know $Q'$ is 4-colorable. Without loss of generality we have a 4-coloring function $f:V(Q')\rightarrow \{1,2,3,4\}$ with $f(N)=1$ and $f(S)=3$, i.e., the edge $NS$ is red and $\Omega\cup\{NS\}$ is a red diamond. The corresponding edge-coloring induced by $f$ is an RGB-tiling on $Q'$ and then an R-tiling$^\ast$ on $Q'$ with $NS$ red. So, $\mathcal{RT}_{0r}$ is non-empty.  An RGB-tiling on $Q'$ with $NS$ red also induce a $T_g\in \mathcal{GT}_{2g}$.  So, $\mathcal{GT}_{2g}$  is non-empty.
   \end{proof}

   \begin{remark}
We get benefit from $Q$ as One Piece: any R-/G-/B-tiling$^\ast$ on $Q$ can extend to a RGB-tiling on $Q$ and then a 4-coloring function. Once $\mathcal{GT}_{2g}(Q)$ is non-empty, so is $\mathcal{RGBT}_{2g2b}(Q)$. Notice that $\mathcal{RGBT}_{0r}(Q)= \mathcal{RGBT}_{4g}(Q)\cup \mathcal{RGBT}_{4b}(Q)\cup \mathcal{RGBT}_{2g2b}(Q)$; so $\mathcal{RGBT}_{0r}(Q)$ is non-empty. Finally, $\mathcal{RT}_{0r}(Q)$ is non-empty. Now we can say that $\mathcal{GT}_{2g}(Q)$ plays the key role of Lemma~\ref{thm:R0exist}.      
   \end{remark}   

   \begin{example} \label{ex:BT4bempty}
Consider $K_4$ with vertex set $\{a,b,N,S\}$ and let $Q:= K_4 -\{ab\}$. Clearly $\mathcal{RT}_{0r}$ and $\mathcal{GT}_{2g}$ are both non-empty. But $\mathcal{BT}_{4b}$ is empty. 
   \end{example}

\subsection{Types B, C and D: two greens and two blues}

Two greens and two blues along the four surrounding edges of $e$-diamond is the commend character of Types B, C and D. Thus the next theorem comes up naturally.      
   
   \begin{theorem}[The Second Fundamental Theorem v3: necessary and sufficient conditions]  \label{thm:R0suf}
Given any MPG, denoted by $M$, with $|M|\le \omega$, here is a necessary and sufficient condition for $M\in  e\mathcal{MPGN}4$:\quad {\bf \{There exists an edge (a sufficient condition)/ It is true for every edge (a necessary condition)\}} $e=ab$ in $M$ such that the 4-semi-MPG $Q:=M-\{e\}$ satisfies one of the following items:     
   \begin{itemize}
\item[(i)] For every $T_r\in\mathcal{RT}_{0r}(Q)$, there is a red $a$-$b$ path $K_r$ of even length in $T_r$.
\item[(ii)] For every $T_g\in\mathcal{GT}_{2g}(Q)$, let us first extend it to an RGB-tiling $T_{rgb}(Q)$. By symmetry or synonym relation, we assume that the four surrounding edges of $e$ are two green and two blue; then there exists a non-trivial red $a$-$b$ path $K_r$ of even length in $T_{rgb}$.
\item[(ii')] For every $T_g\in\mathcal{GT}_{2g}(Q)$, the $e$-diamond must be Type B after we extend this $T_g$ to be any possible $T_{rgb}$. In other word, we have $\langle \mathcal{GT}_{2g}(Q)\rangle= \langle\mathcal{GT}^\wedge_{2g2b}(Q)\rangle$ (i.e., $\langle\mathcal{GT}^{<}_{2g}(Q)\rangle =\langle\mathcal{GT}^{//}_{2g}(Q)\rangle =\emptyset$) and we obtain $K_r|_a^b$.    
   \end{itemize}
   \end{theorem}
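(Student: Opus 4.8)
The plan is to prove the equivalence of all the listed conditions (i), (ii), (ii') with membership in $e\mathcal{MPGN}4$ by reducing everything to the machinery already built up in Theorems~\ref{thm:EPediamond}, \ref{thm:EPediamond2}, \ref{thm:R0G2B4}, Corollary~\ref{thm:4ColorableIfandOnlyIf2} and Lemma~\ref{thm:R0exist}. The logical skeleton I would use is: first establish that the three itemized conditions are equivalent to each other for a general MPG $M$ with $|M|\le\omega$ (so that the theorem makes sense as a single criterion), and then show the ``necessary'' direction (if $M\in e\mathcal{MPGN}4$ then every edge $e$ satisfies the items) is exactly the content of Theorem~\ref{thm:R0G2B4}, while the ``sufficient'' direction (if some edge satisfies an item then $M\in e\mathcal{MPGN}4$) is the genuinely new part and follows from the contrapositive of Corollary~\ref{thm:4ColorableIfandOnlyIf2}.

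First I would handle the sufficient direction via item (ii'), because it is the cleanest. Suppose $M$ is an MPG with $|M|\le\omega$ and there is an edge $e=ab$ with $\langle\mathcal{GT}_{2g}(Q)\rangle = \langle\mathcal{GT}^\wedge_{2g2b}(Q)\rangle$, i.e.\ $\mathcal{GT}^<_{2g}(Q)$ and $\mathcal{GT}^{//}_{2g}(Q)$ are empty (together with their synonyms). By Corollary~\ref{thm:4ColorableIfandOnlyIf2} (whose hypothesis $|M|\le\omega$ guarantees $Q=M-\{e\}$ is $4$-colorable so $\mathcal{RGBT}(Q)\neq\emptyset$), the emptiness of $\mathcal{GT}^<_{2g}\cup\mathcal{GT}^{//}_{2g}$ is \emph{equivalent} to $M\in e\mathcal{MPGN}4$; this simultaneously gives sufficiency and, when combined with Theorem~\ref{thm:R0G2B4}(a), necessity of (ii'). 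So the core work is to show (i) $\Leftrightarrow$ (ii) $\Leftrightarrow$ (ii'). For (ii)$\Rightarrow$(ii') and (ii')$\Rightarrow$(ii): given a $T_g\in\mathcal{GT}_{2g}(Q)$ with its $e$-diamond forced to Type B, extending to $T_{rgb}$ and assigning the south-$\vee$ edges (say) blue forces all four surrounding edges of $e$ into the Type-B pattern, and then replacing the abandoned edge $e$ by red and invoking Theorem~\ref{thm:EPediamond}(c2)--(c3) (valid because $|M|\le\omega$ lets us treat $M$ as a would-be extremum) yields the even-length red $K_r|_a^b$; conversely, the existence of such a $K_r$ blocks any red cycle through $e$, which by the same replacement argument shows $M$ is non-$4$-colorable, hence by Theorem~\ref{thm:R0G2B4}(a) again the diamond is Type B. For (i)$\Leftrightarrow$(ii) I would use Theorem~\ref{thm:EPediamond2}: every Type A RGB-tiling is congruent to a Type B one and vice versa, and ECS on the major canal lines $rCL(aN)$, $rCL(aS)$ converts between $T_r\in\mathcal{RT}_{0r}$ (four black edges, the union of Types A and B) and the Type-B $T_g$'s, transporting the even-length red path along.

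The main obstacle I expect is bookkeeping the quantifier ``for every $T_r$ / for every $T_g$'' correctly through these conversions: an implication like (i)$\Rightarrow$(ii) is not merely ``one tiling exists'' but ``\emph{all} tilings in one collection have the property'' and I must argue that the ECS/congruence correspondence is a bijection between the relevant collections (which is the content of the displayed equality in Theorem~\ref{thm:EPediamond2}) that preserves the ``red $a$-$b$ path of even length'' property. Concretely, given an arbitrary $T_g\in\mathcal{GT}_{2g}$ I need to produce \emph{the} corresponding $T_r\in\mathcal{RT}_{0r}$, not just \emph{a} witness; Lemma~\ref{thm:rCijrCLk} and the two-congruence-classes analysis at the end of Section~\ref{sec:RGBKempeChainDeg5} are what make this work, since modulo those inner ECS's the correspondence is canonical. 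A secondary subtlety is the edge case flagged in Example~\ref{ex:BT4bempty}: I must make sure none of the equivalences silently quantify over a possibly-empty $\mathcal{BT}_{4b}(Q)$ — which is why the theorem is phrased in terms of $\mathcal{RT}_{0r}$ and $\mathcal{GT}_{2g}$, both guaranteed non-empty by Lemma~\ref{thm:R0exist}, and I would explicitly cite that lemma to close the argument that conditions (i), (ii), (ii') are vacuously-satisfiable-free, i.e.\ are honest conditions.
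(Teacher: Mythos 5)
Your overall architecture is sound and partly matches the paper: necessity of all three items comes from Theorem~\ref{thm:R0G2B4}, and sufficiency of (ii$'$) is exactly the emptiness criterion of Corollary~\ref{thm:4ColorableIfandOnlyIf2}. But there is a genuine gap in how you bridge the items. You propose to establish (i) $\Leftrightarrow$ (ii) $\Leftrightarrow$ (ii$'$) \emph{for a general MPG $M$ with $|M|\le\omega$} by invoking Theorem~\ref{thm:EPediamond}(c2)--(c3) and Theorem~\ref{thm:EPediamond2}, and you justify this with the parenthetical claim that ``$|M|\le\omega$ lets us treat $M$ as a would-be extremum.'' That is not true: $|M|\le\omega$ only guarantees that $Q=M-\{e\}$ is $4$-colorable; it says nothing about $M$ being non-$4$-colorable. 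Both cited theorems carry $EP\in e\mathcal{MPGN}$ as a hypothesis, so using their conclusions (existence of the Kempe chain $K_r|_a^b$, the Type A/Type B congruence partnership, the exclusion of Types C and D) in the \emph{sufficiency} direction assumes precisely what you are trying to prove. If $M$ is in fact $4$-colorable, Type C or D tilings may exist and the Kempe chains those theorems promise need not be there; your chain of equivalences collapses exactly in the case that matters.

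The paper avoids this circularity by arguing each sufficiency claim from scratch. For (ii) $\Rightarrow$ (ii$'$) it uses only the parity of the hypothesized even-length $K_r|_a^b$ (the cycle $K_r\cup\{aN,Nb\}$ forces $aN$ and $Nb$ to share a color, pinning the two green edges to the north-$\wedge$ or south-$\vee$ positions) --- no extremality needed. For the sufficiency of (i) it does not detour through (ii$'$) at all: any $4$-coloring of $M$ would induce an RGB-tiling of $M$ in which $e$ is (say) red and its four surrounding edges are non-red, hence would restrict to some $T_r\in\mathcal{RT}_{0r}(Q)$; the hypothesized even red $a$-$b$ path then closes with $e$ into a red odd-cycle, a contradiction, so $M$ is non-$4$-colorable and $|M|\le\omega$ forces $M\in e\mathcal{MPGN}4$. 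Your instinct to route everything through Corollary~\ref{thm:4ColorableIfandOnlyIf2} and to worry about the quantifiers and the possibly empty $\mathcal{BT}_{4b}(Q)$ (citing Lemma~\ref{thm:R0exist}) is good, and a repaired version of your plan is viable --- but the repair must replace every appeal to Theorem~\ref{thm:EPediamond} and Theorem~\ref{thm:EPediamond2} in the sufficiency direction with arguments (parity of cycles, raw ECS on a fixed $T_r$, synonym symmetry) that hold for an arbitrary $4$-colorable $Q$.
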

   
   \begin{remark}
First of all, the statement of Theorem~\ref{thm:R0suf}(ii') does not mention that $K_r|_a^b$ is even length. This fact is automatically true for $a$-$N$-$b$ being a same color and by Lemma~\ref{RGB1-thm:evenoddRGB}(b). The situation and the reason this time are different from Lemma~\ref{thm:twoChainsEven}. Second,  Theorem~\ref{thm:R0G2B4}(a) and (b) (as well as (a) and (c); (a) and (d) respectively) offer a necessary condition for $EP$. Clearly (a) and (b) associate with item (i) here;  (a) and (c) associate with item (ii) and (ii'); but (a) and (d) have no corresponding item here, because this corresponding and powerful item is a big project which need a whole new section to study and explain.    
   \end{remark}

   \begin{remark}
We write {\bf \{There exists an edge (sufficient)/ It is true for every edge (necessary)\}} in this theorem. It is weird to see that a sufficient condition is weaker than a necessary condition. How this theorem comes to be if-and-only-if conditions? The reason or this phenomenon is kind of ``one (diamond) for all and all for one'' as follows:
   \begin{eqnarray*}
&& \text{A particular $e$-diamond satisfies any item of Theorem~\ref{thm:R0suf}.}\\    
&\Rightarrow & \text{$EP\in e\mathcal{MPGN}4$.}\\
&\Rightarrow & \text{Any edge in $EP$ plays the same role as this $e$-diamond by Theorem~\ref{RGB1-thm:V5more}(a).}\\
&\Rightarrow & \text{It is true for every edge in $EP$ such that... (all necessary conditions).}
   \end{eqnarray*}  
   \end{remark}   
   \begin{remark}    \label{re:LastAttempt}
The hypothesis $|M|\le \omega$ is important, because we need $Q:=M-\{e\}$ 4-colorable and the domain $\mathcal{RGBT}(M)$ to check is always non-empty.    
   \end{remark}

   \begin{proof}
First of all, this MPG $M$ with $|M|\le \omega$ implies $Q:=M-\{e\}$ 4-colorable. By Lemma~\ref{thm:R0exist},
the sets $\mathcal{RT}_{0r}(Q)$ and $\mathcal{GT}_{2g}(Q)$ are both non-empty. Also by extending single color tilings to RGB-tiling, we have $\langle\mathcal{RT}_{0r}(Q)\rangle=\langle\mathcal{GT}_{2g}(Q)\rangle \cup \langle\mathcal{BT}_{4b}(Q)\rangle$, even though we do not know whether $\mathcal{BT}_{4b}(Q)$ is empty or not.  
   
(i): By Theorem~\ref{thm:R0G2B4}(a) and (b), this item is a necessary condition. Now let us prove this item is a sufficient condition. By Lemma~\ref{thm:R0exist}, $\mathcal{RT}_{0r}(Q)$ is non-empty, thus discussing a $T_r\in\mathcal{RT}_{0r}(Q)$ is logically reasonable to go ahead. To possibly 4-color $M$ we shall try every R-tiling in  $\mathcal{RT}_{0r}(M-\{e\})$ first in order to try the last red $e$-diamond. Yes, the last $e$-diamond is the final judge: Whether the only possible red odd-cycle turns out? It does turn out if and only if we see a $K_r|_a^b$ of even length for every $T_r\in\mathcal{RT}_{0r}(Q)$.   

(ii'): By Theorem~\ref{thm:R0G2B4}(a) and (c), this item is a necessary condition. Now let us prove this item is a sufficient condition. Actually Corollary~\ref{thm:4ColorableIfandOnlyIf2} is a good reference for if-and-only-if. Item (ii') provides $\langle \mathcal{GT}_{2g}(Q)\rangle= \langle\mathcal{GT}^\wedge_{2g2b}(Q)\rangle$ (i.e., $\langle\mathcal{GT}^{<}_{2g}(Q)\rangle =\langle\mathcal{GT}^{//}_{2g}(Q)\rangle =\emptyset$) that is enough for sufficient condition. But without $K_r|_a^b$ means there exists $K_r|_S^N$ which will transform $T_g^\wedge$  to be $T_g^<$.    

(ii') $\Leftrightarrow$ (ii): The direction (ii') $\Rightarrow$ (ii) is trivial. Now let us show (ii') $\Leftarrow$ (ii). We need only show the position of 
two green and two blue along $Omega$. The reason dues to the even length $K_r|_a^b$. For even length, these two green edges must be either the north-$\wedge$ edges or the south-$\vee$ edges. Therefore, the $e$-diamond must be Type B.   
   \end{proof}

   \begin{remark}
If we assume $e\mathcal{MPGN}4$ non-empty, then $\omega$ is well-defined and $|M|< \omega$ implies $M$ 4-colorable. So all these sufficient conditions are good to check (to distinguish) the two kinds of $M$ with $|M|= \omega$. However, we still consider it is good description to including every $M$ with $|M|< \omega$ in this theorem; because once it satisfies any item of Theorem~\ref{thm:R0suf} we can conclude that either this $M$ should not exist or the set $\mathcal{XT}_{ky}(Q)$ of single color tilings should be empty. Any contradiction is always what we hope for. How about we remove  the requirement $|M|\le \omega$ and set no limit on $|M|$. The problem is that we have no idea about a general MPG, which is not an $EP$, in $ \mathcal{N}4$. We believe that those necessary conditions in Theorem~\ref{thm:R0G2B4} and Theorem~\ref{thm:R0suf} would not work for this general MPG in $ \mathcal{N}4$. So $|M|\le \omega$ is important and cannot be relaxed.
   \end{remark}

   \begin{remark}
Here is a interesting question: For item (ii'), what happens if we 
have $\langle\mathcal{GT}^{<}_{2g2b}(Q)\rangle =\langle\mathcal{GT}^{//}_{2g2b}(Q)\rangle =\emptyset$ but know nothing about the existence of $K_r|_a^b$? The argument is easy. We claim that if $\langle\mathcal{GT}^{<}_{2g2b}(Q)\rangle =\langle\mathcal{GT}^{//}_{2g2b}(Q)\rangle =\emptyset$, then $K_r|_S^N$ is impossible to appear in any RGB-tiling on $EP-\{e\}$. Because given ${T_{rgb}^{\wedge}}(Q)_{2g2b}$ and $K_r|_S^N$, we will then have ${T_{rgb}^{//}}(Q)_{2g2b}$; given ${T_{rgb}}(Q)_{4b}$ and $K_r|_S^N$, we will then have ${T_{rgb}^<}(Q)_{2g2b}$. Therefore, $K_r|_a^b$ must exists for any ${T_{rgb}}(Q)$. However, the hypothesis that {\bf given any ${T_{rgb}}(Q)_{4b}$ and then we always see $(K_r|_a^b, K_g|_a^b)$} is just a necessary condition but not enough to be a sufficient condition for $Q$ non-4-colorable. We will show two counterexamples in the next section.
   \end{remark}

\subsection{If-and-only-if condition by odd-cycle and some conjectures}
It is nice to achieve several if-and-only-if conditions for $EP\in e\mathcal{MPGN}4$ by Theorems~\ref{thm:4ColorableIfandOnlyIf2} and~\ref{thm:R0suf}. However, the first one is easy to check but rare to encounter and the second one is hard to check ``every... must be'' but we see it a lot of times.  The following corollary claim three conditions involving odd-cycles.

   \begin{corollary}
\label{thm:R0_OneOddCycle}
Let $M$ be an MPG with at least an R-tiling. (We exclude out that case that $M$ has no R-tiling. In this case $M$ is definitely non-4-colotable.)
   \begin{itemize}
\item[(a)] A sufficient and necessary condition for $M$ non-4-colorable is that any R-tiling on $M$ has at least one red odd-cycle.    
\item[(b)] Based on (a), $M\in  e\mathcal{MPGN}4$ if and only if there an R-tiling on $M$ who has exactly one red odd-cycle (or $oc(T_{r})=1$).  
\item[(c)] Based on (b), if we fix any edge $e=ab \in E(M)$, then there exists an R-tiling on $M$ whose single red odd-cycle passing through $e$. 
   \end{itemize}
   \end{corollary}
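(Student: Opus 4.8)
\textbf{Proof proposal for Corollary~\ref{thm:R0_OneOddCycle}.}

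The plan is to build everything from the single fact used throughout this section: an R-tiling on an MPG induces a proper $4$-coloring precisely when it has no red odd-cycle (Theorem~\ref{RGB1-thm:4RGBtiling}(c) / Lemma~\ref{RGB1-thm:grandRtilingNoOddCycle}). Part (a) is then immediate: if every R-tiling on $M$ carries at least one red odd-cycle, no R-tiling yields a $4$-coloring and (since $M$ has at least one R-tiling, so the only way to $4$-color would be via such a tiling) $M$ is non-$4$-colorable; conversely, if $M$ is non-$4$-colorable, then no R-tiling can be odd-cycle-free, i.e.\ every R-tiling has at least one red odd-cycle. I would phrase this as a contrapositive pair so that the logical equivalence is visible in one line.

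For part (b), the forward direction ($M\in e\mathcal{MPGN}4 \Rightarrow$ some R-tiling with $oc(T_r)=1$) is where the real content sits, and I expect it to be the main obstacle. The idea is to exploit extremality: pick any edge $e=ab$, so that $Q:=EP-\{e\}$ is $4$-colorable, take a perfect R-tiling$^\ast$ $T_r\in\mathcal{RT}_{0r}(Q)$ (non-empty by Lemma~\ref{thm:R0exist}, or here simply by Theorem~\ref{thm:R0G2B4}(a)), and re-insert $e$ coloured red to obtain an R-tiling on $EP$. By Theorem~\ref{thm:EPediamond}(b)--(c) (Type A or Type B) there is a red Kempe chain $K_r|_a^b$ inside $Q$, so $K_r|_a^b\cup\{e\}$ is a red odd-cycle on $EP$; thus $oc(T_r)\ge 1$. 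The crux is to show it equals $1$: the restriction of this tiling to $Q$ is a \emph{grand} R-tiling$^\ast$ on One Piece (Lemma~\ref{RGB1-thm:RtilingOnePiece}), hence has \emph{no} red odd-cycle, so every red odd-cycle of the extended tiling must use the edge $e$. Any red cycle through $e$ enters and leaves the $e$-diamond through $a$ and $b$ and is therefore $e$ together with a red $a$--$b$ path in $Q$; I must argue that all such red $a$--$b$ paths together with $e$ account for only \emph{one} odd-cycle. This is exactly the statement that in the grand R-tiling $T_r(Q)$ the red-connected component $rC(a;b)$ (the block containing both $a$ and $b$, cf.\ Remark~\ref{re:KnotAsSimpleAs}) contributes a single odd-cycle when $e$ closes it; the canal-line/block-tree structure of Lemma~\ref{thm:BGisTree} and the parity bookkeeping of Lemma~\ref{RGB1-thm:RandomWalk} should force every red $a$--$b$ path in $Q$ to have the same (even) length parity, so $e$ plus any of them is odd, but all these odd-cycles are homologous modulo the trivial (triangle) cycles and the even canal rings — i.e.\ there is a unique non-trivial one. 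I would isolate this as the key lemma of the proof and lean on the ``$oc$'' counting already set up in Part~I. The backward direction of (b) is trivial: one red odd-cycle is still $\ge 1$, so (a) gives $M$ non-$4$-colorable, and $|M|\le\omega$ (implicit, since we are inside the $EP$ discussion) together with minimality forces $M\in e\mathcal{MPGN}4$.

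For part (c), fix an arbitrary $e=ab\in E(M)$ and simply run the construction of (b) \emph{starting from that $e$}: a perfect R-tiling$^\ast$ on $Q:=M-\{e\}$ extended by colouring $e$ red gives an R-tiling on $M$ whose unique red odd-cycle is $K_r|_a^b\cup\{e\}$, which passes through $e$ by construction; uniqueness is inherited from (b). The only thing to check is that the $oc=1$ conclusion of (b) is independent of which edge we deleted, which is precisely the ``one diamond for all'' symmetry of $EP$ via Theorem~\ref{RGB1-thm:V5more}(a) already invoked in the remarks above. I expect parts (a) and (c) to be a few lines each; essentially all the work — and the one place a referee will push back — is the uniqueness claim $oc(T_r)=1$ in part (b), so I would write that step out in full detail and cite the block-tree and random-walk parity lemmas explicitly rather than gesturing at them.
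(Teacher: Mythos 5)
Your part (a) and part (c) match the paper's (very brief) argument: (a) is the First Fundamental Theorem read contrapositively, and (c) is the construction "delete $e$, take an odd-cycle-free R-tiling on $Q=M-\{e\}$, reinsert $e$ red." The trouble is in part (b), exactly where you predicted a referee would push back. Your plan is to prove that the extended tiling $T_r(Q)\cup\{e\}$ has a \emph{literally unique} non-trivial red odd-cycle, via a parity/homology argument (``all these odd-cycles are homologous modulo the trivial cycles and the even canal rings --- i.e.\ there is a unique non-trivial one''). As stated, that uniqueness claim is false and the key lemma you propose to isolate cannot be proved: two distinct red $a$--$b$ paths in $Q$ give two distinct red odd-cycles through $e$, and the paper's own figure-eight example (Figure~\ref{fig:indepCyc}, the cycles $C_1$ and $C_2$ sharing the edge $v_7v_c$) is precisely a configuration with two genuinely different odd-cycles that the author nevertheless counts as ``one.'' The paper's resolution is not a uniqueness theorem but a \emph{definition}: it sets $oc(T_r):=\min\{|S| : S\subseteq T_r,\ (\# o,\# e)_{T_r-S}=(0,\ast)\}$, the minimum number of red edges whose removal destroys all odd-cycles. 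Under that definition your own observation already does all the work in one line: the restriction of the tiling to $Q$ is odd-cycle-free, so $S=\{e\}$ witnesses $oc\le 1$, and $oc\ge 1$ because $M$ is non-4-colorable. Replace your ``key lemma'' by this definition and that one-line computation; the block-tree and random-walk machinery is not needed here. (The paper itself concedes that its discussion of (b) ``is not a proof but a definition,'' so you are not missing a hidden argument.)

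One further logical slip: in the backward direction of (b) you write that a single R-tiling with one odd-cycle ``is still $\ge 1$, so (a) gives $M$ non-4-colorable.'' Part (a) requires \emph{every} R-tiling on $M$ to carry an odd-cycle; the existence of one such tiling proves nothing, since a 4-colorable $M$ can perfectly well admit R-tilings with odd-cycles alongside an odd-cycle-free one. The backward direction only goes through under the standing hypotheses of the section (``based on (a)'', i.e.\ $M$ already known non-4-colorable, together with $|M|\le\omega$ to force extremality); state those hypotheses explicitly instead of deriving non-4-colorability from the single tiling.
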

   \begin{proof}
Briefly, (a) $\Leftrightarrow$ Theorem~\ref{RGB1-thm:4RGBtiling}, and (c) $\Leftrightarrow$ Theorem~\ref{RGB1-thm:eMPG4} together with Theorem~\ref{RGB1-thm:4RGBtiling}.
   
As for part (b), we use an unclear concept. We would rather say that the following discussion is not a proof but a definition of ``exactly one red odd-cycle'' or the definition of $oc(T_{r})$.
   
An independent cycle $C$ means a collection $E(C)$ of edges such that $C$ is 2-connected and $C-\{e\}$ is 1-connected for any $e\in E(C)$. For example, we see two independent cycles in  the first graph of Figure~\ref{fig:indepCyc}. They are one independent odd-cycle and one independent even cycle.

   \begin{figure}[h]
   \begin{center}
   \includegraphics[scale=0.9]{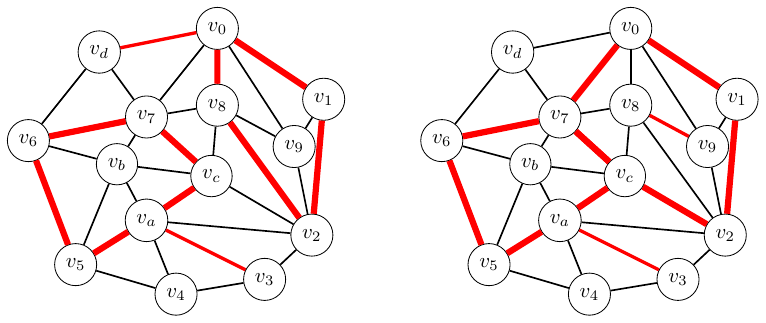}
   \end{center}
   \caption{Left: two independent cycles; Right: a combination of two cycles which has $oc(T_{r})=1$.}  \label{fig:indepCyc}
   \end{figure}
A combination of cycles looks like the second graph of Figure~\ref{fig:indepCyc}. We use this graph to explain. Obviously there are three cycles, namely   
	\begin{eqnarray*}
C_1 &:=& v_0\text{-}v_1\text{-}v_2\text{-}v_c\text{-}v_7\text{-}v_0;\\
C_2 &:=& v_5\text{-}v_6\text{-}v_7\text{-}v_c\text{-}v_a\text{-}v_5;\\
C_3 &:=& v_0\text{-}v_1\text{-}v_2\text{-}v_c\text{-}v_a\text{-}v_5\text{-}v_6\text{-}v_7\text{-}v_0.
	\end{eqnarray*}
Notice that $C_1$ and $C_2$ are odd-cycles, and $C_3$ is an even-cycle. However, we wound not say this R-tiling has two odd-cycles. Actually this R-tiling has only one odd-cycle. We will explain the reason behind.

Let $EC$ be the edge set of this combination and $EC$ consists of 9 edges. Also define the pair $(\# o,\# e )_{EC}$ to be the numbers of odd-cycles and even-cycles made by $EC$. Our example has $(\# o,\# e )_{EC}=(2,1)$.

Among the nine edges in out $EC$, $v_7v_c$ is different from the other eight; because $(\# o,\# e )_{EC-\{v_7v_c\}}=(0,1)$ and $(\# o,\# e )_{EC-\{v_xv_y\}}=(1,0)$ when $v_xv_y$ is one of the other eight. The pairs $(0,1)$ and $(1,0)$ indicate an independent cycle remained. Therefore, this combination is actually combined by two cycle. Also due  to $(0,1)$ and $(1,0)$, it shall be combined by one odd-cycle and one even-cycle. Thus, we would say this combination is $C_1\oplus C_3$ or $C_2\oplus C_3$; but we would not say it is $C_1\oplus C_2$. Now we can conclude that the combination in the second graph of Figure~\ref{fig:indepCyc} has exactly one odd-cycle. 

Please, refer to the formal definition behind. Because choosing any $e\in EP$ we can have an RGB-tiling $T_{rgb}$ on $EP-\{e\}$ and $T_{r}\cup \{e\}$ has exactly one red odd-cycle, where $T_{r}$ is the R-tiling induced by $T_{rgb}$.

   \end{proof}   

	\begin{definition}
Let $T_{r}$ be an R-tiling on an MPG $M$. This $T_{r}$ is also the set of all red edges. The number of odd cycles of $T_{r}$ is defined to 
  $$
oc(T_{r}) := \min \{|S| \mid S\subseteq T_{r}
\text{ such that $(\# o,\# e )_{T_{r}-S}=(0,\ast)$}\}.
  $$
Similarly we can define the number of even cycles $ec(T_{r})$. 
	\end{definition}

Let us choose an $e$-diamond of $EP$ and a fixed RGB-tiling $T_{rgb}(EP-e)$, whether $T_{rgb}$ is Type A or Type B.  Also let $FP$ and this $e$-diamond look like either graphs in Figure~\ref{fig:AtypeBtype00}. There are a north and a south red canal rings, denoted by $NrCL_e$ and $SrCL_e$ respectively. Indeed, $NrCL_e:= rCL(aN)$ and $SrCL_e:= rCL(aS)$. Without loss of generality, we assume $Co(a,b)=1$ and then we have a fixed direction of current along $NrCL_e$ and $SrCL_e$. We also use $NrCL_e^r$ and $SrCL_e^r$to denote the right canal banks of these two currents. Please, refer to Definition~\ref{RGB1-def:deja_vu}.   

The union $NrCL_e^r \cup SrCL_e^r$ has three parts. (1): Let $rNS(EP;e;T):=NrCL_e^r \cap SrCL_e^r$.
Clearly $e\in rB(EP;e;T)$; (2): Let $rDJV(EP;e;T)$ consist of those \emph{deja-vu} edges for $NrCL_e^r$ or for $SrCL_e^r$; (3) Besides $rNS(EP;e;T)$ and $rDJV(EP;e;T)$, the rest of $NrCL_e^r \cup SrCL_e^r$ forms some {\bf red even-cycles}. Furthermore, these cycles are classified into three sub-parts. (3a): Cycles made by both edges from $NrCL_e^r$ and $SrCL_e^r$; (none of them from $rB(EP;e;T)$.)  (3b): Cycles made by edges from $NrCL_e^r$; (3c): Cycles made by both edges from $SrCL_e^r$. 

	\begin{remark} \label{re:eToAnothere}
For (1), every edge $e'$ in $rNR(EP;e;T)$ can play the same role as $e$ for this fixed $T_{rgb}$. In other words, we can turn $e$ from yellow to red and in the same time turn $e'$ from red to yellow. Of course, we need to perform ECS along $NrCL_e$ and $SrCL_e$ (not all but parts of them). Finally we have a new $T'_{rgb}$ with a Type A or Type B $e'$-diamond.  Usually Type A is good, because  to determine at least a new $K'_g$ such that $K'_g\cup\{e'\}$ is an odd-cycle.
	\end{remark}

	\begin{remark}
For (3) and any other red even-cycle in  this $T_{rgb}(EP-e)$ form their own normal red canal rings. Performing ECS on any of these red canal rings or even combination of them will of course do nothing on $K_r$ in $\Sigma'$; However, amazingly these ECS might change the real shapes of $K_g$ and $K_r$, but nothing to do with the original green/blue connection in view of skeleton in $\Sigma'$.
	\end{remark}

	\begin{remark}
For (2), \emph{deja-vu} edges in $NrCL_e^r$ (or $SrCL_e^r$) is a magic, because it can be a short cut of the  current along $NrCL_e^r$. If we cross a red deja-vu edge when we perform ECS designed in Remark~\ref{re:eToAnothere}, then we will get two new $e$-diamonds. Two $e$-diamonds at the same might offer some interesting results.
	\end{remark}

Fix a Type A $e$-diamond of $EP$ and we concern about all kind of the RGB-tilings $T_i(EP-e)$ who has exactly one red (green) odd-cycle if we replace the yellow double-line $e$ by red (green).

Let us modify $EP$ by merging $a=b$ as well as merging $aN=bN$ and $aS=bS$, and we obtain a new MPG, denoted by $EP^{a=b}$ (no more vertex $b$). Clearly $\deg(a)\ge 6$ and $T_{rgb}$

 such that once $aN$ and $aS$

 or Type B

These if-and-only-if conditions for $EP$ will definitely make contribution in the further studies.  Theorem~\ref{thm:R0G2B4} and Theorem~\ref{thm:R0suf} offer a new approach for proving the Four Color Theorem without checking by a computer. According to the discussion from Section~\ref{sec:RGBKempeChainDeg5} up to here, we summarize the main idea as follows.
   \begin{quote}
Compared with the classical Kempe proof that considered 4-colorable $EP-\{v\}$ with $\deg(v)=5$, our new approach study 4-colorable $EP-\{e\}$ for any edge $e$ in $EP$.
   \end{quote}

\section{Type A is just a syndrome}

From the successful if-and-only conditions provided in the last section, the next property is highly recommended. 

   \begin{fconjecture}  \label{thm:4bsufnes}
Given any MPG, denoted by $M$ with $|M|\le \omega$, and provided $\mathcal{BT}_{4b}(Q)$ {\bf non-empty}, here is a necessary and sufficient condition for $M\in  e\mathcal{MPGN}4$: There exists an edge $e=ab$ in $M$ such that the 4-semi-MPG $Q:=M-\{e\}$ such that for every $T_b\in\mathcal{BT}_{4b}$ with any extended $T_{rgb}$ there exist the dual Kempe chains $(K_r|_a^b,K_g|_a^b)$ and both of them are even length.
   \end{fconjecture}

Unfortunately, we are going to demonstrate two simple but critical counterexamples as follows.

\begin{figure}[h]
\begin{center}
   \includegraphics[scale=1.2]{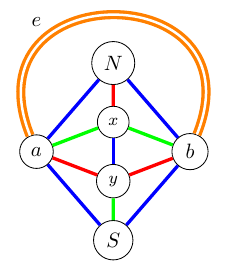}\qquad \quad
   \includegraphics[scale=1.2]{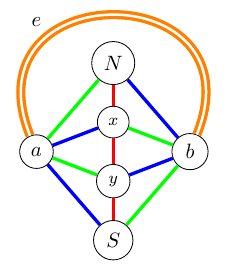}
\end{center}
\caption{$Q_1:=M_1-\{e\}$} \label{fig:TypeCounterEx}
\end{figure}   
   \begin{example}
The left graphs in Figures~\ref{fig:TypeCounterEx} and \ref{fig:TypeCounterEx2} are $M_1, M_2 \in e\mathcal{MPGN}4$.
Both graphs show an $e$-diamond of A-type. Notice that we draw  $\Sigma'$ inside and $\Sigma$ outside $\Omega:= N\text{-}a\text{-}S\text{-}b\text{-}N$ this particular time. Given the four edges along $\Omega$ are all blue, the edges that link any two inner vertices of $\Sigma'$ must be all blue, in order to fulfill the unique blue canal line. The two left graphs show the only possible RGB-tiling on $Q_1$/$Q_2$ under synonyms w.r.t.\ red and green. So $|\mathcal{BT}_{4b}(Q_1)|=|\mathcal{BT}_{4b}(Q_2)|=1$ are non-empty and we do find out $(K_r|_a^b,K_g|_a^b)$ in these two RGB-tilings.  Unfortunately the right graphs in Figures~\ref{fig:TypeCounterEx} and \ref{fig:TypeCounterEx2} show that $M_1, M_2$ are 4-colorable if we color edge $e$ by red.   
\begin{figure}[h]
\begin{center}
   \includegraphics[scale=1.2]{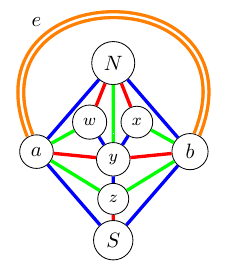}\qquad \quad
   \includegraphics[scale=1.2]{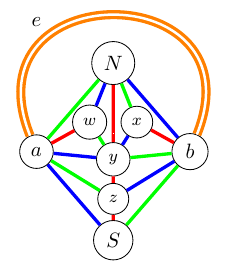}   
\end{center}
\caption{$Q_2:=M_2-\{e\}$} \label{fig:TypeCounterEx2}
\end{figure}   
   \end{example}

As the title of this section ``Type A is just a syndrome'', it dose not mean useless of this syndrome. Some thing might happen behind a syndrome. How to remedy or diagnose this particular syndrome? We suggest checking by Theorem~\ref{thm:4ColorableIfandOnlyIf} directly. Actually Type A syndrome still offers good reason to simplify the check method of Theorem~\ref{thm:4ColorableIfandOnlyIf}.      

   \begin{remark}[To be or not to be; the limit of Type A]
Conjecture~\ref{thm:4bsufnes} is incorrect but it still reveals some information for our further study. At the end of Part III of this paper (Section~\ref{RGB3-sec:deg5Adjacent}), we will provided a false proof that concerns two adjacent vertices of degree 5. That proof is incorrect because we did use Conjecture~\ref{thm:4bsufnes}. Due to that false proof, we learn more about the limit of Type A. The author has lunched a new research to find the possible approach to conquer this dilemma of to-be-or-not-to-be.  
   \end{remark}

\section{4-cycles in $EP$}

This is an independent section, but the result is very important. As an extremum MPG, $EP\ in e\mathcal{MPGN}4$ has many properties that other MPG's do not, i.e., some properties that neither 4-color graphs nor non-extremum non-4-color ones have. 

For instance, adding an extra vertex $v$ into the middle of any triangle, say $w$-$x$-$y$-$w$, of $EP$ and linking new edges from three neighbors to $v$ to get a new MPG. This new MPG, denoted $EP+v$, is of course non-4-colorable and its order is $\omega+1$. Interestingly  $\deg(v)=3$.  Please, refer to Theorem~\ref{RGB1-thm:V5more} and Theorem~\ref{RGB1-thm:V5} as for $EP$. Also $w$-$x$-$y$-$w$, a 3-facet in $EP$, is now a non-trivial triangle of $EP+v$. Please, refer to Lemma~\ref{RGB1-thm:nontrivial3}. Does $G-\{vw\}$ follow those necessary conditions given in Theorem~\ref{thm:R0G2B4}? The answer is no, because $Q:=G-\{vw\}$ is non-4-colorable and then the sets $\mathcal{RT}_{0r}$, $\mathcal{GT}_{2g}$, $\mathcal{GT}^\wedge_{2g}$ and $\mathcal{BT}_{4b}$ are all empty. Thus, Theorem~\ref{thm:R0G2B4}(a) fails for $G-\{vw\}$, not to say Theorem~\ref{thm:R0G2B4}(b), (c) and (d).  

Rather than adding just an extra vertex $v$ to $EP$, we can glue up any new MPG $M$ with $|M|\ge 5$ onto $EP$. Just let them share a common triangle $w$-$x$-$y$-$w$.  Notice that the case in the last paragraph have $V(M)=\{v,w,x,y\}$. This new non-4-colorable MPG, denoted by $EP\oplus M$, has a non-trivial triangle $w$-$x$-$y$-$w$. However, Lemma~\ref{RGB1-thm:nontrivial3} said that every triangle in $EP$ must be trivial. So, that lemma is only for $EP$ as an extremum, but not for $EP\oplus M$. In $EP\oplus M$, it is also easy to find  three non-trivial 4-cycles, where we define a \emph{trivial 4-cycle} associating with four surrounding edges of a diamond.

In the following, let us focus on 4-cycles in $EP$. A trivial 3-cycle means a 3-facet, and a trivial 4-cycle forms a diamond; both ideas of ``trivial'' indicate the kinds of cycles that have no vertex inside.

   \begin{theorem}[Important]   \label{thm:trivial4cycle}
If $\Omega:=a$-$b$-$c$-$d$-$a$ is a 4-cycle in $EP$, then either $ac$ or $bd$ is an edge of $EP$, and $a,b,c,d$ induce a single diamond in $EP$.
   \end{theorem}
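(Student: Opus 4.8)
The plan is to use the same $e$-diamond machinery developed in Sections~\ref{sec:ediamond} and~\ref{sec:NecSufConds}, now applied to a $4$-cycle $\Omega := a\text{-}b\text{-}c\text{-}d\text{-}a$ that need not bound a diamond. First I would argue that $\Omega$ cannot be a \emph{non-trivial} $4$-cycle, i.e. it cannot have vertices both inside and outside. Suppose it does; then $\Omega$ partitions $EP$ into two regions $\Sigma$ (inside) and $\Sigma'$ (outside) with $\Sigma\cap\Sigma'=\Omega$, each a $4$-semi-MPG with at least five vertices. The key is that $EP$ is extremum: for any single edge $e\in E(EP)$, the graph $EP-\{e\}$ is $4$-colorable by Theorem~\ref{RGB1-thm:eMPG4}(b), hence has an RGB-tiling. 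But more is needed here — I want to $4$-color $EP$ itself by exploiting the $4$-cycle as a ``cut.''

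The concrete step is this: take the $4$-cycle $\Omega$ and consider whether $ac$ or $bd$ is an edge of $EP$. If neither is an edge, form $EP' := EP \cup \{ac\}$ by adding the chord $ac$ \emph{inside} $\Sigma$ (this is possible since $\Omega$ is a face-separating cycle in the relevant region after re-embedding; planarity is preserved because $ac$ can be routed through $\Sigma$). Actually the cleaner route mirrors the proof of Theorem~\ref{thm:EPNScolorable}(a): if $\Omega$ is non-trivial, then $\Sigma$ contains at least one interior vertex, and I want to derive a $4$-coloring of $EP$ by independently $4$-coloring the two sides $EP[\Sigma]$ and $EP[\Sigma']$ (each of order $<\omega$, hence $4$-colorable) and then reconciling the colors on $\Omega$. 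Reconciliation on a $4$-cycle is the delicate point: a proper $4$-coloring of $C_4$ uses either two colors (pattern $1,2,1,2$) or three colors (pattern $1,2,1,3$) or all four ($1,2,3,4$); by permuting colors on one side we can always match the colorings on the four boundary vertices \emph{provided} the two boundary patterns have the same ``type'' (same partition of $\{a,b,c,d\}$ into color-classes). The obstruction is exactly when one side forces pattern $1,2,3,4$-type on $\Omega$ and the other forces, say, $1,2,1,3$-type — then no color permutation reconciles them. This is where the $e$-diamond analysis enters: I claim that since each side is a proper sub-MPG of the extremum $EP$, one can perform Kempe-chain / ECS adjustments (as in Section~\ref{sec:RGBKempeChainDeg5}) within a side to change the boundary type, unless that side already ``looks like'' half of a diamond.

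So the main steps in order: (1) set up $\Sigma,\Sigma'$ and note both are $4$-semi-MPGs of order $<\omega$, hence $4$-colorable; (2) classify the possible color patterns induced on $\Omega$ by a $4$-coloring of a side, and show that via VCS/ECS (valid by Lemma~\ref{thm:BGisTree} since each side, being One Piece relative to its own outer $4$-gon, has tree-structured block graphs) the boundary pattern of at least one side can be driven to any prescribed type — in particular to agree with the other side; (3) conclude $EP$ is $4$-colorable, contradiction, so $\Omega$ must be trivial, meaning no interior vertices on one side; (4) a $4$-cycle with no interior vertex on one side is bounded by that side being empty, so $\Omega$ is the outer boundary of a region containing only the chord structure — i.e. one of $ac$, $bd$ is an edge, and the four vertices $a,b,c,d$ together with that chord and the $\Omega$-edges form precisely a diamond (two triangles sharing the chord). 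The last point needs the observation that a triangulated region bounded by a $4$-cycle with no interior vertex has exactly one chord.

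\textbf{Main obstacle.} The hard part will be step (2): showing the boundary color pattern on $\Omega$ is \emph{adjustable} on at least one side. A priori a side might rigidly force the $1,2,3,4$-type (all four boundary vertices distinct colors) with no Kempe chain available to collapse two of them — this is analogous to the tangling phenomenon in Section~\ref{sec:tanglingProperty} and to why Kempe's original argument fails. I expect one must use that the relevant Kempe chain connecting, say, $a$ and $c$ through a side, if it blocks the recoloring, can be re-routed because the side is \emph{strictly smaller} than $\omega$ and therefore not extremum, so the tangling property need not hold there; concretely, one invokes Theorem~\ref{thm:4ColorableIfandOnlyIf} or Corollary~\ref{thm:4ColorableIfandOnlyIf2} applied to the side-plus-chord MPG to guarantee the needed $\mathcal{GT}^{<}_{2g}$ or $\mathcal{GT}^{//}_{2g}$ tiling is non-empty, which is exactly the statement that the required boundary pattern is achievable. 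Pinning down this reduction cleanly — and handling the case distinction between adding chord $ac$ versus $bd$ — is the technical heart of the argument.
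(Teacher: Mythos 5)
Your overall strategy coincides with the paper's: assume $\Omega$ is non-trivial, split $EP$ into $\Sigma$ and $\Sigma'$, observe both sides are $4$-colorable because each side plus a chord is an MPG of order less than $\omega$, and derive a contradiction by reconciling the two boundary colorings. Your step (4) is also fine. But the step you yourself flag as the ``main obstacle'' --- showing the boundary patterns can be made to agree --- is exactly where the proof lives, and your proposed resolution does not close it. You claim the boundary pattern of at least one side can be driven to \emph{any} prescribed type by VCS/ECS; that is stronger than what is true and is not what the paper proves. Your suggested appeal to Theorem~\ref{thm:4ColorableIfandOnlyIf} also does not apply as stated: that theorem concerns a $4$-cycle that bounds an $e$-diamond of an MPG $M$ (so that $\mathcal{GT}^{<}_{2g}$ or $\mathcal{GT}^{//}_{2g}$ non-empty certifies $4$-colorability of $M$ itself), whereas here neither chord $ac$ nor $bd$ is an edge of $EP$ and both sides are already known to be $4$-colorable; the open question is which boundary partitions are achievable, and that theorem does not answer it.

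The paper closes the gap with two concrete ideas you are missing. First, an apex-vertex trick: if \emph{every} $4$-coloring of a side forced four distinct colors on $a,b,c,d$, then adding a new vertex $u$ joined to $a,b,c,d$ would produce a non-$4$-colorable MPG of order at most $\omega$ with $\deg(u)=4$, contradicting either the extremality of $EP$ or the fact that $EP$ has no vertex of degree $4$ (Theorem~\ref{RGB1-thm:V5}). Hence each side admits a coloring with at most three colors on the boundary. Second, a short case analysis: if both sides can realize the two-color pattern $1,2,1,2$, the colorings glue immediately; otherwise one side, say $\Sigma$, always needs at least three colors, hence realizes exactly the pattern $1,2,3,2$, and the classical Kempe dichotomy (either a $1$--$3$ chain joins $a$ to $c$, or a switch collapses the pattern to two colors, which is excluded) forces $b$ and $d$ into different $2$--$4$ components, so $\Sigma$ can realize \emph{both} $1,2,3,2$ and $1,2,3,4$; since $\Sigma'\cup\{ac\}$ forces one of exactly these two patterns, the sides always reconcile. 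Without these two ingredients --- the degree-$4$ obstruction and the single targeted Kempe switch --- your outline remains a plan rather than a proof.
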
  
   \begin{remark}
This is again a special property for $EP$,  but not for a non-4-colorable MPG with order greater than $\omega=|EP|$. The 4-cycle $\Omega$ separate $EP$ into two regions: $\Sigma$ (inside) and $\Sigma'$ (outside) with  $\Sigma\cap\Sigma'=\Omega$. Both $\Sigma$ and $\Sigma'$ are 4-semi-MPG's, and $|\Sigma|$, $|\Sigma'|$ are less than $|EP|$ if $\Omega$ is non-trivial. Otherwise, a trivial 4-cycle $\Omega$ will make the inside of either $\Sigma$ or $\Sigma'$ no vertex; as a part of MPG $EP$, this empty inside shall have either the edge $ac$ or $bd$. On the other hand, if $ac$ exists inside $\Sigma$ as well as crosses $\Omega$, then $\Sigma$ must be a diamond by Lemma~\ref{RGB1-thm:nontrivial3}. If both $ac$ and $bd$ exist in $EP$, then by Lemma~\ref{RGB1-thm:nontrivial3} we must have $EP=K_4$, which is a contradiction. This is also the reason for ``either $ac$ or $bd$'' is an edge of $EP$. 
   \end{remark}
   \begin{proof}
Let us adopt the notation in the remark above. We will prove by contrapositive by assuming both sides of $\Omega$ have some vertices. By this assumption, neither the edge $ac$ nor $bd$ exists in $\Sigma$ and in $\Sigma'$, because the existence of $ac$ inside $\Sigma$ set the existence of the triangles $a$-$b$-$c$-$a$ and $a$-$d$-$c$-$a$, and then force $\Sigma$ to be a diamond by Lemma~\ref{RGB1-thm:nontrivial3}. Because $\Sigma\cup\{ac\}$ (draw the edge $ac$ outside $\Omega$) and $\Sigma'\cup\{ac\}$ (draw the edge $ac$ inside $\Omega$) are MGPs with $|\Sigma|$ and $|\Sigma'|$ less then $|EP|$, they are 4-colorable and then $\Sigma$  and $\Sigma'$  are also 4-colorable. 

If every 4-coloring function on $\Sigma$ makes $a,b,c,d$ exactly four different colors then the new graph $G:=\Sigma\cup\{au,bu,cu,du\}$ forms a non-4-colorable MPG.  The result either contradicts to $EP$  being an extremum, or $G=EP$ which contradicts to Theorem~\ref{RGB1-thm:V5} for $\deg(u)=4$. Therefore, there is at least a 4-coloring of $\Sigma$ that makes $a,b,c,d$ at most three different colors. This argument also works for $\Sigma'$, so there is at least a 4-coloring of $\Sigma'$  that makes $a,b,c,d$ at most three different colors.  

Suppose both $\Sigma$ and $\Sigma'$ have a 4-coloring making $a,b,c,d$ only two colors. Then $EP=\Sigma\cup\Sigma'$ is 4-colorable. So, it is a contradiction. So, at least one of $\Sigma$ and $\Sigma'$ has all its 4-colorings making $a,b,c,d$ only two colors.

Without loss of generality, we assume that every 4-coloring of $\Sigma$ never making $a,b,c,d$ only two colors.  However, we also know that there is at least a 4-coloring of $\Sigma$  that makes $a,b,c,d$ at most three different colors. So, there is a 4-coloring function of $\Sigma$, say $f$, that makes $f(a)=1$, $f(b)=2$, $f(c)=3$, $f(d)=2$. Notice that there exists either a 1-3 Kempe chain connecting $a$ and $c$, or a 2-4 Kempe chain connecting $b$ and $d$. However, the existence of that 2-4 Kempe chain will cause vertex $f(c)=1$ by vertex-coloring-switching applying on the 1-3 connected component containing $c$. And then we get a new 4-coloring of $\Sigma$  making $a,b,c,d$ only two colors; so a contradiction! Thus, the only possible is 1-3 Kempe chain connecting $a$ and $c$, and $b$ and $d$ belong to two different 2-4 connected components. Now we back to the 4-colorable MGP $\Sigma'\cup{ac}$. Without loss of generality, we have 4-coloring function $f':V(\Sigma)\rightarrow\{1,2,3,4\}$ with $f'(a)=1$, $f(b)=2$, $f(c)=3$, $f(d)=x$ where $x=2$ or $4$. If $x=2$ then once again $EP=\Sigma\cup\Sigma'$ is 4-colorable; so a contradiction! If $x=4$, let us go back to $\Sigma$ and do vertex-coloring-switching on the 2-4 connected component containing $d$ and make $f(d)=4$, i.e., we makes $a,b,c,d$ colored by $1, 2, 3, 4$ respectively. Once again $EP=\Sigma\cup\Sigma'$  is 4-colorable; so a contradiction.  For all these contradictions we conclude that $a,b,c,d$ induce a single diamond in $EP$ which is equivalent to the fact that either $ac$ or $bd$ is an edge of $EP$.   
   \end{proof}
At the end of Part II, we leave a question:  What are the possible shapes for a 5-cycle in an $EP$?   

\bibliographystyle{amsplain}

\end{document}